\newtheorem{theorem}{Theorem}[section]
\newtheorem{lemma}[theorem]{Lemma}
\newtheorem{corollary}[theorem]{Corollary}
\newtheorem{definition}{Definition}[section]
\theoremstyle{remark}
\newtheorem{remark}[theorem]{Remark}
\theoremstyle{definition}
\newtheorem{example}{Example}[section]
\numberwithin{equation}{section}
\newcommand{\ra}{\rightarrow}
\newcommand{\alp}{\alpha}
\newcommand{\R}{\ensuremath{\mathbb{R}}}
\newcommand{\N}{\ensuremath{\mathbb{N}}}
\newcommand{\Div}{\mathrm{div}}
\newcommand{\Levy}{\ensuremath{\mathcal{L}}}
\newcommand{\sgn}{{\rm sgn}\, }
\newcommand{\dif}{\mathrm{d}}
\begin{document}

\title[Continuous dependence estimates for integro-PDEs]{Continuous
dependence estimates for nonlinear fractional convection-diffusion
equations}

\author[N.~Alibaud]{Natha\"{e}l Alibaud}
\address[Natha\"{e}l Alibaud]{UMR CNRS 6623, Universit\'e de Franche-Comt\'e\\
16 route de Gray\\ 25 030 Besan\c{c}on cedex, France\\ 
and
\'Ecole Nationale Sup\'erieure de M\'ecanique et des Microtechniques, 26 chemin de l'\'Epitaphe, 25030 Besan\c{c}on cedex, France\\
and Department of Mathematics, Faculty of Science\\
Prince of Songkla University\\
Hat Yai, Songkhla, Thailand, 90112}
\email{nathael.alibaud\@@{}ens2m.fr}

\author[S.~Cifani]{Simone Cifani}
\address[Simone Cifani]{Department of Mathematics\\
Norwegian University of Science and Technology (NTNU)\\
N-7491 Trondheim, Norway} \email[]{simone.cifani\@@{}math.ntnu.no}
\urladdr{http://www.math.ntnu.no/\~{}cifani/}

\author[E.~R.~Jakobsen]{Espen R. Jakobsen}
\address[Espen R. Jakobsen]{Department of Mathematics\\
Norwegian University of Science and Technology (NTNU)\\
N-7491 Trondheim, Norway} \email[]{erj\@@{}math.ntnu.no}
\urladdr{http://www.math.ntnu.no/\~{}erj/}

\subjclass[2010]{35R09, 35K65, 35L65, 35D30, 35B30}

\keywords{Fractional/fractal conservation laws, nonlinear parabolic
equations, pure jump L\'{e}vy processes, continuous dependence
estimates}

\thanks{This research was supported by the Research Council of Norway (NFR), through the project ``Integro-PDEs:
Numerical methods, Analysis, and Applications to Finance'', and by the ``French ANR project CoToCoLa''.}

\begin{abstract}
  We develop a general
framework for finding error
  estimates for convection-diffusion
  equations with nonlocal, nonlinear, and possibly degenerate
  diffusion terms.  The equations are nonlocal because they involve
  fractional diffusion operators that are generators of pure jump
  L\'{e}vy processes (e.g.~the fractional Laplacian). As an application,
  we derive continuous   dependence estimates on the nonlinearities
  and on the L{\'e}vy measure of the diffusion term. Estimates of
  the rates of convergence   for general nonlinear nonlocal vanishing
  viscosity approximations of
  scalar conservation laws then follow as a corollary. Our
  results both cover, and extend to new equations, a
  large part of the known error estimates in the literature.
\end{abstract}

\maketitle

\section{Introduction}
This paper is concerned with the following Cauchy problem:
\begin{equation}\label{1}
\begin{cases}
\partial_t u(x,t)+\Div \left(f(u) \right)(x,t)
=\Levy^\mu[A(u(\cdot,t))](x) &\text{in} \quad  Q_T:=\mathbb{R}^d\times(0,T),\\
u(x,0)=u_{0}(x), &\text{in} \quad \R^d,
\end{cases}
\end{equation}
where~$u$ is the scalar unknown function,~$\Div$ denotes the
divergence with respect to (w.r.t.)~$x$, and the
operator~$\Levy^\mu$ is defined for all~$\phi \in C^\infty_c(\R^d)$
by
\begin{align}\label{nat:levy-form}
\Levy^\mu[\phi](x):=\int_{\R^d \setminus \{0\}}
\phi(x+z)-\phi(x)-z\cdot D \phi(x)\mathbf{1}_{|z| \leq 1}\
\dif\mu(z),
\end{align}
where $D \phi$ denotes the gradient of~$\phi$ w.r.t.~$x$ and $
\mathbf{1}_{|z| \leq 1}=1 $ for $|z|\leq1$ and $=0$ otherwise.
Throughout the paper, the data $(f,A,u_0,\mu) $ is assumed to
satisfy the following assumptions:
\begin{align}
&f \in W^{1,\infty}(\R,\R^d) \text{ with } f(0)=0,\label{flux}\\
&A\in W^{1,\infty}(\R) \text{ is nondecreasing with } A(0)=0,\label{Aflux}\\
&u_0\in L^\infty(\R^d)\cap L^1(\R^d)\cap
BV(\R^d),\label{in_condition}\\
\intertext{and} &\text{$\mu$ is a nonnegative Radon measure on~$\R^d
\setminus \{0\}$
  satisfying}\qquad\qquad \qquad
\label{levy-measure}
\end{align}
$$\int_{\R^d \setminus \{0\}} |z|^2 \wedge 1 \, \dif\mu(z)<+\infty,$$
where we use the notation~$a\wedge b=\min \{a,b\}$. The measure
$\mu$ is a L\'evy measure.
\begin{remark}\*
\label{rem1}
\begin{enumerate}
\item Subtracting constants to~$f$ and~$A$ if necessary, there is no loss of generality in assuming that~$f(0)=0$ and~$A(0)=0$.
\item Our results also hold for locally Lipschitz-continuous
  nonlinearities~$f$ and~$A$ since solutions will be bounded; see
  Remark~\ref{nat:rem-loc-lip} for more details.
\item Assumption \eqref{levy-measure} and a Taylor expansion reveal
  that $\Levy^\mu[\phi]$ is well-defined for e.g.~bounded $C^2$
  functions $\phi$:
$$
|\Levy^\mu[\phi](x)| \leq \max_{|z| \leq 1} |D^2\phi(x+z)|
\int_{0<|z|\leq 1}\frac12 |z|^2  \dif\mu(z)+2\|\phi\|_{L^\infty}
\int_{|z|>1}\dif\mu(z)
$$
where~$D^2 \phi$ is the Hessian of~$\phi$. If in addition $D^2 \phi$
is bounded on~$\R^d$, then so is $\Levy^\mu[\phi]$.
\end{enumerate}
\end{remark}

Under~\eqref{levy-measure},~$\Levy^\mu$ is the generator of a pure
jump L\'{e}vy process, and reversely, any pure jump L\'{e}vy process
has a generator of like $\Levy^\mu$ (see e.g.~\cite{Ap:Book,Sato}).
This class of diffusion processes contains e.g. the~$\alpha$-stable
process whose generator is the fractional
Laplacian~$-\left(-\triangle \right)^{\frac{\alpha}{2}}$
with~$\alpha \in (0,2)$. It can be defined for all~$\phi \in
C^\infty_c(\R^d)$ via the Fourier transform as
$$
\left(-\triangle \right)^{\frac{\alpha}{2}} \phi=\mathcal{F}^{-1}
\left( |\cdot|^\alpha \mathcal{F} \phi \right),
$$
or in the form~\eqref{nat:levy-form} with the following L\'evy
measure (see~e.g.~\cite{Ap:Book,Droniou/Imbert,Imb05}):
\begin{equation}\label{frac_lap}
\begin{split}
\dif \mu(z)= \frac{\dif z}{|z|^{d+\alpha}} \mbox{ (up to a positive
multiplicative constant).}
\end{split}
\end{equation}
Many other L\'evy processes/operators of practical interest can be
found in e.g. \cite{Ap:Book,Cont/Tankov}. Under
assumption~\eqref{Aflux},~$\mathcal{L}^\mu [A( \cdot)]$ is an
example of a nonlinear nonlocal diffusion operator. For recent
studies of this and similar type of operators, we refer the reader
to~\cite{BiKaIm10,BiKaMo10,CaVa10,Cifani/Jakobsen,DPQRV} and the
references therein.

Equation~\eqref{1} appears in many different contexts such as
overdriven gas detonations~\cite{Clavin}, mathematical
finance~\cite{Cont/Tankov}, flow in porous media \cite{DPQRV},
radiation hydrodynamics~\cite{RoYo07,Ros89}, and anomalous diffusion
in semiconductor growth \cite{Woyczynski}. Equations of the
form~\eqref{1} constitute a large class of nonlinear degenerate
parabolic integro-differential equations (integro-PDEs). Let us give
some representative examples. 

When $A=0$ or~$\mu = 0$,~\eqref{1} is
the well-known scalar conservation law (see e.g. \cite{Da:Book} and
references therein):
\begin{equation}\label{nat:scl}
\partial_t u+\Div f(u)=0.
\end{equation}
When~$A(u)=u$ and~$\Levy^\mu$ is the fractional Laplacian,~\eqref{1} is the so-called fractal/fractional
conservation law:
\begin{equation}\label{nat:linear-eq}
\partial_t u+\Div f(u)=-\left(-\triangle \right)^{\frac{\alpha}{2}}u.
\end{equation}
Equation~\eqref{nat:linear-eq} has been extensively studied since
the
nineties~\cite{Alibaud,AlAn10,AlDrVo07,AlImKa10,BiFuWo98,BiKaWo99,
BiKaWo01-1,BiKaWo01-2,ChCz10,ChCzSi10,CiJaKa10-1,CiJaKa10-2,DoDuLi09,Droniou,Dro10,
Droniou/Gallouet/Vovelle,Droniou/Imbert,IvMi10,JoMeWo05-1,JoMeWo05-2,JoRo10,KaMiXu08,KiNaSh08,MiYuZh08,MiWu09}.
The case of more general L\'{e}vy diffusions, combined with nonlinear
local diffusions,
 \begin{equation}\label{kensul}
\partial_t u+\Div f(u)=\Div(a(u)\nabla u)+\Levy^\mu[u],
\end{equation}
can be found in~\cite{KaUl10}. 

When~$A$ is nonlinear,~\eqref{1} can be seen as a generalization of
the following classical convection-diffusion equation (possibly
degenerate):
\begin{equation}\label{nat:local-eq}
\partial_t u+\Div f(u)=\triangle A(u);
\end{equation}
see e.g.~\cite{Perthame,Car99,Chen/Karlsen,CoGr99,Karlsen/Risebro} for precise
references on~\eqref{nat:local-eq}. Nonlinear nonlocal diffusions have
been invetigated in \cite{DPQRV} in the setting of nonlocal porous
media equations and $L^1$ semi-group methods, and
in~\cite{Cifani/Jakobsen} where an  $L^\infty\cap L^1$ entropy
solution theory is developed for more general degenerate equations of the
form \eqref{1} along with connections to
Hamilton-Jacobi-Bellman equations of stochastic control theory. 
Other interesting examples concern the class of
nonsingular L\'evy measures satisfying~$ \int_{\R^d \setminus \{0\}}
\dif \mu(z)<+\infty.$ In that case, \eqref{1} can also be seen as a generalization of
Rosenau's models~\cite{KaNi98,KaNi99,LaMa03,LiTa01,ScTa92,Ser03}
and nonlinear radiation hydrodynamics models~\cite{RoYo07} of the
form
\begin{align}
\partial_t u+\Div f(u) & =  g \ast A(u)-A(u),\label{nat:hydro}
\end{align}
where~$\ast$ denotes the convolution product w.r.t.~$x$ and~$g
\in L^1(\R^d)$ is nonnegative with~$\int_{\R^d} g(z) \, \dif z=1$.

Most of the results on these nonlocal equations concern
Equation~\eqref{nat:linear-eq} with the~$\alpha$-stable linear
diffusions, and convolution models~\eqref{nat:hydro} with nonlinear
but nonsingular L\'evy diffusions. It is known that shocks can occur
in finite
time~\cite{AlDrVo07,DoDuLi09,KaNi98,KaNi99,KiNaSh08,LiTa01,ScTa92},
that weak solutions can be nonunique~\cite{AlAn10}, and that the
Cauchy problem is well-posed with the notion of entropy solutions in
the sense of Kruzhkov~\cite{Alibaud,LaMa03,RoYo07}; see
also the works~\cite{CoGrLo96,JaWi04} for the related topic of time fractional
derivatives. The entropy solution theory has been generalized
in~\cite{KaUl10} to singular but linear L\'evy diffusions along with
nonlinear local diffusions. Very recently, it has been extended
in~\cite{Cifani/Jakobsen} to cover the full problem~\eqref{1} for
general singular L\'evy measures and nonlinear $A$.

The purpose of the present paper is to develop an abstract framework
for finding error estimates for entropy solutions of~\eqref{1}. As
applications, we focus  in this paper on continuous dependence
estimates and convergence rates for vanishing viscosity
approximations. We refer the reader
to~\cite{Perthame,Chen/Karlsen,CoGr99,Karlsen/Risebro,Kuznetsov} and the references
therein for similar analysis on~\eqref{nat:local-eq} and related
local equations. As far as nonlocal equations are concerned,
continuous dependence estimates for fully nonlinear integro-PDEs
have already been derived in~\cite{Jakobsen/Karlsen} in the context
of viscosity solutions of Bellman-Isaacs equations; see
also~\cite{Droniou/Imbert,Imb05,Jakobsen/Karlsen} for error
estimates on nonlocal vanishing viscosity approximations. 

To the best of our knowledge, the first and up to now only continuous
dependence estimate 
for nonlocal conservation laws can be found in \cite{KaUl10}; see
also~\cite{Alibaud,CoGrLo96,Droniou,Droniou/Imbert,LaMa03,ScTa92} for
convergence rates for vanishing viscosity approximations of
Equations~\eqref{nat:linear-eq} and~\eqref{nat:hydro}.  The general
estimate in~\cite{KaUl10} is established for Equation~\eqref{kensul} for linear
symmetric L\'evy diffusions. 
Inspired by an early version of the present paper,
a formal discussion on possible extensions to nonlinear nonlocal
diffusions is also given. On the technical side, \cite{KaUl10}
employs so-called entropy defect measures while we do not.

To finish with the bibliography, let us also refer the reader
to~\cite{Cifani/Jakobsen2,CiJaKa10-1,CiJaKa10-2,DeRo04,Dro10,RoYo07}
for the related topic of error estimates for numerical approximations.

Our main result is stated in Lemma~\ref{lem:kuznetsov}, and it
compares the entropy solution~$u$ of~\eqref{1} with a general
function~$v$. Our main application consists in comparing~$u$ with
the entropy solution~$v$ of
\begin{equation}\label{bb1}
\begin{cases}
\partial_t v+\Div g(v)=\Levy^\nu[B(v)],\\
v(x,0)=v_0,
\end{cases}
\end{equation}
where the data set~$(g,B,v_0,\nu) $ is assumed to
satisfy~\eqref{flux}--\eqref{levy-measure}. We obtain explicit
continuous dependence estimates on the data stated in
Theorems~\ref{th:nonlin}--\ref{nat:th-levy}. Let us recall that
when~$B=0$ or~$\nu=0$,~\eqref{bb1} is the pure scalar conservation
law in~\eqref{nat:scl}. Equation~\eqref{1} can thus be seen as a
nonlinear nonlocal vanishing viscosity approximation
of~\eqref{nat:scl} if~$A$ or~$\mu$ vanishes. The rate of convergence
is then obtained as a consequence of
Theorems~\ref{th:nonlin}--\ref{nat:th-levy}, see
Theorem~\ref{nat:cor-rate}.

It is natural to compare Theorems~\ref{th:nonlin}--\ref{nat:th-levy}
and Theorem ~\ref{nat:cor-rate} with the known error estimates for
the different equations above. One can see that a quite important part of
them are particular cases of our general results. We discuss this
point in Section~\ref{nat:sec-discussion} by giving precise
examples. Let us mention that we also give an example of
a simple Hamilton-Jacobi equation where we show that
Theorems~\ref{th:nonlin}--\ref{nat:th-levy} are in some sense the
``conservation law version'' of the results
in~\cite{Jakobsen/Karlsen}; see Example~\ref{nat:ex-HJ}. 

To finish, let us mention that in the case of fractional Laplacians
of order~$\alpha \geq 1$,
Theorems~\ref{th:nonlin}--\ref{nat:th-levy} can be improved by
taking advantage of the homogeneity of the measures
in~\eqref{frac_lap}. In order not to make this paper too long, this
special case (including~$\alpha<1$) is investigated in a second
paper~\cite{AlCiJa10}.

The rest of this paper is organized as follows. In
Section~\ref{nat:sec-preliminaries} 
we 
recall the notion of entropy solution
to~\eqref{1}. In Section~\ref{sec:2}, we state and discuss our main
results.
Sections~~\ref{nat:sec-pro-nl}--\ref{nat:sec-proofs} are devoted to
the proofs of our main results; Section~\ref{nat:sec-pro-nl} states
some preliminary results on the nonlocal operator. 

\subsection*{Notation}

Hereafter,~$a \vee b:=\max \{a,b\}$, while $\cdot$
and~$|\cdot|$ denote the Euclidean inner product and norm. For $A \in \R^{d \times d}$,~$|A|:=\max \{A \, w:w\in \R^d, \, |w|\leq 1\}$. The symbols
$\|\cdot\|$ and $|\cdot|$ are used for norms and semi-norms of functions
respectively.  The symbol~$\mbox{supp}$ is used for the support. The superscripts~$^\pm$ are used for the positive and negative parts. The total variation of a Radon
measure~$\mu$ is denoted by $|\mu|$. Its
tensor product with the Lebesgue measure~$\dif w$ is denoted by~$\dif \mu(z) \,
\dif w$.

\section{Entropy formulation and well-posedness}\label{nat:sec-preliminaries}

Let us recall the formal computations leading to the entropy
formulation of~\eqref{1}.  First we split~$\Levy^\mu$ into 3 parts:
\begin{equation}\label{nat:sum-levy}
\mathcal{L}^\mu[\phi](x)=\Levy_{r}^\mu[\phi](x)+ \Div \left(b_r^\mu
\, \phi \right) (x)+\Levy^{\mu,r}[\phi](x)
\end{equation}
for~$\phi \in C^\infty_c(\R^d)$,~$r>0$, and~$x \in \R^d$, where
\begin{align}
\Levy_{r}^\mu[\phi](x) & :=  \int_{0<|z|\leq r} \phi(x+z)-\phi(x)-z\cdot D\phi(x) \, \mathbf{1}_{|z| \leq 1} \, \dif\mu(z),\label{nat:def-decomposition-inner}\\
b_r^\mu & :=  -\int_{|z| >r} z \mathbf{1}_{|z| \leq 1} \, \dif \mu(z),\label{nat:def-decomposition-b} \\
\Levy^{\mu,r}[\phi](x) & :=  \int_{|z|>r}\phi(x+z)-\phi(x) \,
\dif\mu(z).\label{nat:def-decomposition-outer}
\end{align}
Consider then the Kruzhkov~\cite{Kru70}  entropies~$|\cdot-k|$,~$k
\in \R$, and entropy fluxes
\begin{equation}\label{nat:def-flux}
q_f(u,k):=\sgn(u-k) \, (f(u)-f(k)) \in \R^d,
\end{equation}
where we always use the following everywhere representative of the
sign function:
\begin{equation}\label{nat:representation-sign}
\sgn(u) :=
\begin{cases}
\pm 1& \mbox{ if~$ \pm u > 0$,}\\
0 & \mbox{ if~$u=0$.}
\end{cases}
\end{equation}
By~\eqref{Aflux} it is readily seen that for all~$u,k \in \R$,
\begin{equation}\label{nat:key}
\sgn (u-k) \, (A(u)-A(k)) = |A(u)-A(k)|,
\end{equation}
and we formally deduce from~\eqref{nat:sum-levy},~\eqref{nat:key},
and the nonnegativity of~$\mu$ that
\begin{equation*}
\begin{split}
&\sgn (u-k) \, \Levy^{\mu}[A(u)]\\
&\leq \Levy^{\mu}_r[|A(u)-A(k)|]+\Div \left(b_r^\mu \,
|A(u)-A(k)|\right)+\sgn (u-k) \, \Levy^{\mu,r}[A(u)].
\end{split}
\end{equation*}

Let $u$ be a solution of~\eqref{1}, and multiply~\eqref{1} by~$\sgn
(u-k)$. Formal computations then reveal that
\begin{equation*}
\begin{split}
&\partial_t |u-k|+\Div \, \left(q_f(u,k)- b_r^\mu \, |A(u)-A(k)| \right)\\
&\qquad\qquad\leq\Levy^{\mu}_r[|A(u)-A(k)|]+\sgn (u-k) \,
\Levy^{\mu,r}[A(u)].
\end{split}
\end{equation*}
The entropy formulation in Definition~\ref{L1-entropy} below
consists in asking that~$u$ satisfies this inequality for all
entropy-flux pairs (i.e. for all $k\in\R$) and all~$r>0$. Roughly
speaking one can give a sense to~$\sgn (u-k) \, \Levy^{\mu,r}[A(u)]$
for bounded discontinuous~$u$ thanks to~\eqref{levy-measure}. But
since $\mu$ may be singular at~$z=0$, see Remark \ref{rem1} (3), the
other terms have to be interpreted in the sense of distributions:
Multiply by test functions~$\phi$ and integrate by parts to move
singular operators onto test functions. For the nonlocal terms this
can be done by change of variables: First take~$(z,x,t) \rightarrow
(-z,x,t)$ to see (formally) that
$$
\int_{Q_T} \phi \, \Div \, \left(b_r^\mu \, |A(u)-A(k)|\right)  \dif
x \dif t =\int_{Q_T} D \phi \cdot b_r^{\mu^\ast} \, |A(u)-A(k)| \,
\dif x \dif t,
$$
where
 $\mu^\ast$ is the  L{\'e}vy measure (i.e. it satisfies
 \eqref{levy-measure}) defined for all~$\phi \in C_c^\infty(\R^d \setminus \{0\})$ by
\begin{equation}\label{nat:measure-dual}
\int_{\R^d \setminus \{0\}} \phi(z) \, \dif \mu^\ast(z):=\int_{\R^d \setminus \{0\}} \phi(-z) \, \dif \mu(z).
\end{equation}
In view of \eqref{nat:def-decomposition-inner}, we can take~$(z,x,t)
\rightarrow (-z,x+z,t)$ to find that
$$
\int_{Q_T} \phi \, \Levy^{\mu}_r[|A(u)-A(k)|] \, \dif x \dif t
=\int_{Q_T} |A(u)-A(k)| \, \Levy^{\mu^\ast}_r[\phi] \, \dif x \dif
t.
$$

This leads to the following definition introduced
in~\cite{Cifani/Jakobsen}.
\begin{definition}\label{L1-entropy}\emph{(Entropy solutions)}
Assume~\eqref{flux}--\eqref{levy-measure}. We say that a function~$
u\in L^\infty(Q_T)\cap C \left([0,T];L^1\right) $ is an entropy
solution of~\eqref{1} provided that for all~$k\in\mathbb{R}$,~all
$r>0$, and all nonnegative~$\phi\in C^\infty_c(\R^{d+1})$,
\begin{multline}\label{entropy_ineq}
\int_{Q_{T}} |u-k|\,\partial_t\phi+\left(q_{f}(u,k)+b_r^{\mu^\ast} \, |A(u)-A(k)|\right)\cdot D\phi \, \dif x \dif t\\
+\int_{Q_{T}} |A(u)-A(k)|\,\Levy^{\mu^\ast}_{r}[\phi]+\sgn (u-k)\,\Levy^{\mu,r}[A(u)]\,\phi \, \dif x\, \dif t\\
-\int_{\R^d} |u(x,T)-k|\,\phi(x,T) \, \dif x +\int_{\R^d}
|u_0(x)-k|\,\phi(x,0) \, \dif x\geq 0.
\end{multline}
\end{definition}

\begin{remark}\label{modif:rem}\

\begin{enumerate}
\item \label{rem-ref-l} 
Under assumptions \eqref{flux}--\eqref{levy-measure}, the entropy
inequality~\eqref{entropy_ineq} is well-defined independently of the
a.e.~representative of $u$. To see this, note that since $\mu^\ast$
satisfies~\eqref{levy-measure}, it easily
follows that $\Levy^{\mu^\ast}_r[\phi]\in C_c^\infty(\R^{d+1})$.
Since~$\sgn(u-k)$,~$q_f(u,k)$, and~$A(u)$ belong to~$L^\infty$
by~\eqref{nat:representation-sign} and~\eqref{flux}--\eqref{Aflux},
it is then clear that all terms in~\eqref{entropy_ineq} are
well-defined except possibly the $\Levy^{\mu,r}$-term. Here  it may
look like we are integrating Lebesgue measurable functions w.r.t.~a
Radon measure $\mu$. 
However, the integrand does have the right
  measurability by a classical approximation procedure, see Remark 5.1
  in \cite{Cifani/Jakobsen}.  We 
therefore find that since~$A(u)$ belongs to~$C([0,T];L^1)$, so does
also~$\Levy^{\mu,r}[A(u)]$ and we are done.
\item \label{rem-ref} Another way to understand the measurability issue in \eqref{rem-ref-l},
is simply to consider only Borel measurable a.e. representatives of the solutions. The reading of the paper would remain exactly the same, since our~$L^1$-continuous dependence estimate do not depend on the representatives.
\item In the definition of entropy solutions, it is possible to
  consider functions~$u$ only defined for a.e. $t\in[0,T]$ by taking
  test functions with compact support in $Q_T$ and adding an explicit
  initial condition, see e.g. \cite{Cifani/Jakobsen}.
\item One can check that classical solutions are entropy solutions,
  thus justifying the formal computations leading to
  Definition \ref{L1-entropy}. Moreover entropy
  solution are weak solutions and hence smooth entropy solutions are
  classical solutions. We refer the reader to~\cite{Cifani/Jakobsen}
  for the proofs.
\end{enumerate}
\end{remark}

Here is a well-posedness result from~\cite{Cifani/Jakobsen}.
\begin{theorem}\emph{(Well-posedness)}\label{th:well-posedness}
Assume~\eqref{flux}--\eqref{levy-measure}. There exists a unique
entropy solution~$u$ of~\eqref{1}. This entropy solution belongs
to~$L^\infty(Q_T)\cap C \left([0,T];L^1\right) \cap L^\infty
\left(0,T;BV\right) $ and
\begin{equation}\label{nat:nonincrease}
\begin{cases}
\|u\|_{L^\infty(Q_T)}\leq\|u_0\|_{L^\infty(\mathbb{R}^d)},\\
\|u\|_{C \left([0,T];L^1 \right)}\leq\|u_0\|_{L^1(\mathbb{R}^d)},\\
|u|_{L^\infty \left(0,T;BV\right)}\leq|u_0|_{BV(\mathbb{R}^d)}.
\end{cases}
\end{equation}
Moreover, if $v$ is the entropy solution of~\eqref{1}
with~$v(0)=v_0$ for another initial data~$v_0$
satisfying~\eqref{in_condition}, then
\begin{equation}\label{nat:L1-contraction}
\|u-v\|_{C([0,T];L^1)} \leq \|u_0-v_0\|_{L^1(\R^d)}.
\end{equation}
\end{theorem}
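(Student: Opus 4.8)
The plan is to treat the three assertions separately --- the a priori bounds \eqref{nat:nonincrease}, the $L^1$-contraction \eqref{nat:L1-contraction}, from which uniqueness is immediate, and existence --- and to observe at the outset that most of the bounds follow once \eqref{nat:L1-contraction} is available. Indeed, the constants $\pm\|u_0\|_{L^\infty}$ are steady states since $\Div f(c)=0$ and $\Levy^\mu[A(c)]=\Levy^\mu[c]=0$, so the $L^\infty$ bound comes from a comparison argument, equivalently from \eqref{entropy_ineq} with $|k|\geq\|u_0\|_{L^\infty}$; the $L^1$ bound is \eqref{nat:L1-contraction} applied to the pair $(u,0)$, because $v\equiv0$ is the entropy solution with datum $0$ (every term of \eqref{entropy_ineq} then integrates to zero, using $f(0)=A(0)=0$ and the translation invariance of the Lebesgue measure); and the $BV$ bound follows by translation invariance of \eqref{1}, namely $\|u(\cdot+h,t)-u(\cdot,t)\|_{L^1}\leq\|u_0(\cdot+h)-u_0\|_{L^1}\leq|h|\,|u_0|_{BV}$, divided by $|h|$. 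So the core of the theorem is \eqref{nat:L1-contraction} together with existence.

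For \eqref{nat:L1-contraction} I would run Kruzhkov's doubling of variables adapted to the nonlocal term. Given two entropy solutions $u,v$, fix $r>0$, write \eqref{entropy_ineq} for $u=u(x,t)$ with $k=v(y,s)$ and for $v=v(y,s)$ with $k=u(x,t)$, test both against $\phi(x,t,y,s)=\psi\!\left(\tfrac{x+y}{2},\tfrac{t+s}{2}\right)\omega_\delta(x-y)\,\tilde\omega_{\delta_0}(t-s)$ with $\psi\geq0$ smooth and $\omega_\delta,\tilde\omega_{\delta_0}$ mollifiers, add, and integrate. The convection terms $q_f\cdot D\phi$ are handled exactly as by Kruzhkov: the $\sgn$-structure and $f\in W^{1,\infty}$ kill the cross terms as $\delta,\delta_0\to0$, leaving a transport-type contribution against $D_x\psi$. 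The new terms are the three pieces of $\Levy^\mu$ from the splitting \eqref{nat:sum-levy} and their analogues in the $v$-variables: for the bounded pieces $\sgn(u-v)\,\Levy^{\mu,r}[A(u)]$ one uses $\mu\geq0$ and \eqref{nat:key} --- concretely $\sgn(u-v)\bigl(A(u(x+z))-A(u(x))\bigr)\leq|A(u(x+z))-A(v)|-|A(u(x))-A(v)|$, monotonicity of $A$ being exactly what is needed --- together with the change $z\mapsto-z$, so that their sum contributes nonpositively in the limit; the singular pieces $|A(u)-A(k)|\,\Levy_r^{\mu^\ast}[\phi]$ and the drift pieces $b_r^{\mu^\ast}\,|A(u)-A(k)|\cdot D\phi$ must be kept together before letting $r\to0$, since $b_r^{\mu^\ast}$ may blow up; once recombined, the singular part acts only on the smooth $\phi$ and is $O\!\bigl(\int_{0<|z|\leq r}|z|^2\,\dif\mu\bigr)\to0$ by the Taylor bound of Remark~\ref{rem1}(3), while the residual drift contribution vanishes as $\delta\to0$ by the antisymmetry of $\omega_\delta$. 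Letting $\delta_0\to0$, then $\delta\to0$, then $r\to0$ gives $\del_t\!\int_{\R^d}|u-v|\,\dif x\leq0$ in $\mathcal{D}'(0,T)$, and with the $C([0,T];L^1)$-regularity this yields \eqref{nat:L1-contraction}; $u_0=v_0$ gives uniqueness.

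For existence I would regularize on three levels: mollify $(f,A,u_0)$ to smooth data $(f^\eps,A^\eps,u_0^\eps)$ keeping \eqref{flux}--\eqref{in_condition}, replace $\mu$ by the finite L\'evy measure $\mu^{(\eta)}$ with $\dif\mu^{(\eta)}:=\mathbf{1}_{|z|>\eta}\,\dif\mu$, and add an artificial viscosity $\eps\triangle$, so that
\[
\del_t u^{\eps,\eta}+\Div f^\eps(u^{\eps,\eta})=\eps\triangle u^{\eps,\eta}+\Levy^{\mu^{(\eta)}}[A^\eps(u^{\eps,\eta})],\qquad u^{\eps,\eta}(0)=u_0^\eps,
\]
a uniformly parabolic problem with a globally Lipschitz, bounded nonlocal perturbation, hence uniquely solvable with smooth solutions by classical theory; these are classical, so entropy, solutions by Remark~\ref{modif:rem}(4). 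The maximum principle, comparison with $0$, and translation invariance give the bounds \eqref{nat:nonincrease} uniformly in $\eps,\eta$; using the equation, the $BV$ bound, and the fact that $\Levy^{\mu^{(\eta)}}[A^\eps(u^{\eps,\eta})]$ is bounded, uniformly in $\eps,\eta$, in $L^1$ for its far part and in a negative-order space for its singular part (via \eqref{nat:sum-levy} and a Taylor expansion), one gets equicontinuity in time in $L^1_{\mathrm{loc}}$. An Aubin--Lions/Arzel\`a--Ascoli argument then gives precompactness in $C([0,T];L^1_{\mathrm{loc}})$, upgraded to $C([0,T];L^1)$ by the uniform $L^1$ bound and tightness. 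Passing to the limit in \eqref{entropy_ineq} for $\mu^{(\eta)}$ (with the extra term $\eps\int|u^{\eps,\eta}-k|\triangle\phi$) is routine: convection and zero-order terms pass by strong $L^1$ convergence and $f^\eps\to f$, $A^\eps\to A$ locally uniformly; the $\eps$-term vanishes; the $b_r^{\mu^\ast}$- and $\Levy_r^{\mu^\ast}[\phi]$-terms pass since $\mu^{(\eta),\ast}\to\mu^\ast$ and $\phi$ is smooth; and the $\Levy^{\mu,r}$-term passes by dominated convergence using \eqref{levy-measure}. The limit is the entropy solution and inherits \eqref{nat:nonincrease}. The main obstacle is the nonlocal diffusion inside the doubling of variables: one must simultaneously push its singular part onto the smooth doubling test function and only then send $r\to0$, keep the possibly unbounded drift $b_r^{\mu^\ast}$ coupled to that singular part so that only the convergent combination is passed to the limit, and extract the correct sign from the bounded part $\Levy^{\mu,r}$ through monotonicity of $A$ and positivity of $\mu$ --- all with no help from regularizing effects, since $A$ may be arbitrarily degenerate. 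The bookkeeping of the iterated limits and the measurability of the $\Levy^{\mu,r}$-integrand are the remaining technical points, the latter handled as in Remark~\ref{modif:rem}(1)--(2).
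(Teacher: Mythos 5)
The paper does not actually prove Theorem~\ref{th:well-posedness}: it is quoted from \cite{Cifani/Jakobsen} (``Here is a well-posedness result from \cite{Cifani/Jakobsen}''). Your outline follows essentially the same route as that reference and as the machinery this paper does develop --- the splitting \eqref{nat:sum-levy}, the Kato-type inequalities of Lemmas~\ref{nat:ipp} and~\ref{nat:pro-kato-d} for the bounded tail $\Levy^{\mu,r}$, the $W^{2,1}$-bound of Lemma~\ref{nat:def-singular} for the singular part, contraction by doubling of variables, the $L^\infty$/$L^1$/$BV$ bounds by comparison and translation invariance, and existence by viscous approximation plus truncation of the L\'evy measure and $BV$-compactness. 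So the strategy is the right one.

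One step would fail as written: the order of the limits. The singular term $|A(u)-A(k)|\,\Levy_r^{\mu^\ast}[\phi]$ is controlled by $\|\phi\|_{W^{2,1}}\int_{0<|z|\leq r}|z|^2\,\dif\mu(z)$, i.e.\ by a constant that blows up like a negative power of the \emph{spatial} mollification parameter $\delta$; moreover there is no cancellation between the two copies of this term coming from the $u$- and $v$-inequalities (one has $\Levy_r$ acting in $x$, the other in $y$, and on a test function of $x-y$ they produce $\Levy_r^{\mu^\ast}[\bar\theta]+\Levy_r^{\mu}[\bar\theta]$, the symmetrized operator, not zero). Hence $r\to0$ must be taken for \emph{fixed} spatial mollifier, before $\delta\to0$ --- exactly the order used in the proof of Theorem~\ref{th:nonlin}, and the reason Lemma~\ref{Kuz:lem} is stated with $C_\epsilon$ independent of $r$. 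Your order ``$\delta_0\to0$, then $\delta\to0$, then $r\to0$'' leaves an unbounded error term. Relatedly, the drift pieces do not need any ``antisymmetry of $\omega_\delta$'': since the coefficient $|A(u)-A(v)|\,b_r^{\mu^\ast}$ is symmetric in $(u,v)$ and $D_x\phi=-D_y\phi$ once $\psi\equiv1$ (as in \eqref{nat:test-kuznetsov}), the two drift contributions cancel exactly for every $r$, which is what neutralizes the possible divergence of $b_r^{\mu^\ast}$ as $r\to0$. With these two corrections (and the standard care, which you do acknowledge, in getting uniform-in-$(\eps,\eta)$ time-equicontinuity from the merely $W^{-2,1}$-type bound on the singular part of the operator), the argument goes through.
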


\begin{remark}\label{nat:rem-loc-lip}
By the~$L^\infty$-estimate in~\eqref{nat:nonincrease}, all the
results of this paper also holds for locally Lipschitz-continuous
nonlinearities $(f,A)$. Simply replace the data $(f,A)$ by $(f,A) \,
\psi_M$, where $\psi_M\in
C_c^\infty(\R)$ is such that $\psi_M=1$ in $[-M,M]$ for
$M=\|u_0\|_{L^\infty(\R^d)}$. 
\end{remark}

\section{Main results}\label{sec:2}

Our first main result  is a Kuznetsov type of lemma that measures
the distance between the entropy solution~$u$ of~\eqref{1} and an
arbitrary function~$v$.

Let~$\epsilon, \delta>0$ and $\phi^{\epsilon,\delta} \in
C^\infty(Q_T^2)$ be the test function
\begin{eqnarray}
\phi^{\epsilon,\delta}(x,t,y,s) := \theta_\delta(t-s) \,
\bar{\theta}_{\epsilon}(x-y),\label{nat:test-kuznetsov}
\end{eqnarray}
where~$\theta_\delta(t) := \frac{1}{\delta} \,
\tilde\theta_1\big(\frac{t}{\delta} \big)$
and~$\bar{\theta}_{\epsilon}(x) :=
\frac{1}{\epsilon^d}\tilde\theta_d\big(\frac{x}{\epsilon}\big)$ are,
respectively, time and space approximate units with kernel
$\tilde\theta_{n } $ with $n=1$ and $n=d$ satisfying
\begin{equation}\label{nat:smooth-kernel}
\tilde\theta_{n } \in C_{c}^{\infty}(\mathbb{R}^{n }),\quad
\tilde\theta_{n } \geq 0,\quad \mbox{supp} \, \tilde \theta_n
\subseteq \{|x|<1\} , \quad\text{and}\quad \int_{\R^{n }}
\tilde\theta_{n } (x ) \, \dif x =1.
\end{equation}
We also let $\omega_u(\delta)$ be the modulus of continuity
of~$u \in C\left([0,T];L^1\right)$.

\begin{lemma}[Kuznetsov type Lemma]\label{lem:kuznetsov}
Assume~\eqref{flux}--\eqref{levy-measure}. Let~$u$ be the entropy
solution of~\eqref{1} and~$v\in L^\infty(Q_T)\cap C
\left([0,T];L^1\right)$ with~$v(0)=v_0$. Then for all~$r>0$,
$\epsilon>0$, and $0<\delta<T$,
\begin{equation}\label{kuz}
\begin{split}
& \|u(T)-v(T)\|_{L^{1}(\mathbb{R}^d)}\\
& \leq\|u_0-v_0\|_{L^{1}(\mathbb{R}^d)}+\epsilon \, C_{\tilde\theta} \, |u_0|_{BV(\R^d)}+2\,\omega_{u}(\delta) \vee  \omega_{v}(\delta) \\
& \quad -\iint_{Q_{T}^2}|v(x,t)-u(y,s)|\,\partial_t \phi^{\epsilon,\delta}(x,t,y,s)\, \dif w\\
& \quad  -\iint_{Q_{T}^2} \left(q_f(v(x,t),u(y,s))+b_r^{\mu^\ast} \, |A(v(x,t))-A(u(y,s))|\right) \cdot D_x\phi^{\epsilon,\delta}(x,t,y,s)\, \dif w\\
&\quad+\iint_{Q_{T}^2}
|A(v(x,t))-A(u(y,s))|\,\Levy^{\mu^\ast}_{r}[\phi^{\epsilon,\delta}(x,t,\cdot,s)](y)
\, \dif w \\
& \quad  -\iint_{Q_{T}^2} \sgn (v(x,t)-u(y,s))\,\Levy^{\mu,r}[A(u(\cdot,s))](y) \,\phi^{\epsilon,\delta}(x,t,y,s) \, \dif w\\
& \quad +\iint_{\R^d \times Q_T} |v(x,T)-u(y,s)|\,\phi^{\epsilon,\delta}(x,T,y,s) \, \dif x \, \dif y \, \dif s\\
& \quad -\iint_{\R^d \times Q_T}
|v_0(x)-u(y,s)|\,\phi^{\epsilon,\delta}(x,0,y,s) \, \dif x \, \dif y
\, \dif s
\end{split}
\end{equation}
where~$\dif w:=\dif x\, \dif t \, \dif y\,\dif s$, and
$C_{\tilde\theta}:=2 \int_{\R^d} |x| \tilde\theta_d(x) \, \dif x$.
\end{lemma}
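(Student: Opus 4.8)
The plan is to run the classical Kuznetsov doubling-of-variables argument, now adapted to the nonlocal setting. First I would use the entropy inequality \eqref{entropy_ineq} for $u$ with the choice $k = v(x,t)$ and test function $\phi(y,s) = \phi^{\epsilon,\delta}(x,t,y,s)$ (with $(x,t)$ frozen), then integrate the result over $(x,t) \in Q_T$. This produces, on the right-hand side, exactly the combination of five integrals appearing in \eqref{kuz} involving $\partial_t$, $D_x$, $\Levy^{\mu^\ast}_r$, $\Levy^{\mu,r}$, and the initial/terminal boundary terms — modulo the following subtlety. The entropy inequality naturally yields $D_y \phi^{\epsilon,\delta}$ and $\partial_s \phi^{\epsilon,\delta}$, whereas \eqref{kuz} is written with $D_x$ and $\partial_t$. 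Since $\bar\theta_\epsilon(x-y)$ and $\theta_\delta(t-s)$ satisfy $D_y \phi^{\epsilon,\delta} = -D_x\phi^{\epsilon,\delta}$ and $\partial_s\phi^{\epsilon,\delta} = -\partial_t\phi^{\epsilon,\delta}$, the two forms agree, taking care also that the $\Levy^{\mu^\ast}_r$-term acts on the $y$-variable. One must justify that $k=v(x,t)$ is an admissible ``constant'': since $v \in L^\infty$, one approximates $v$ by simple functions in the $k$-variable, uses linearity and the continuity of $q_f(\cdot,k)$, $|A(\cdot)-A(k)|$ in $k$ (from \eqref{flux}--\eqref{Aflux}), and passes to the limit; this is the standard Kruzhkov device and is legitimate because all the integrands are continuous in $k$.

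The second ingredient is a crude one-sided bound relating $v$ at the doubled points. Since $v \in C([0,T];L^1)$ but is \emph{not} assumed to be an entropy (or even weak) solution, I cannot symmetrize by running an entropy inequality for $v$. Instead I bound directly the remaining terms using only regularity. The key elementary estimates are: for the terminal time, $\|u(T)-v(T)\|_{L^1} \le \iint_{\R^d\times Q_T} |v(x,T)-u(y,s)|\phi^{\epsilon,\delta}(x,T,y,s)\,\dif x\,\dif y\,\dif s + (\text{error})$, where the error comes from replacing $u(y,s)$ by $u(x,T)$ and collapsing the mollifiers; by the $BV$-bound on $u$ in \eqref{nat:nonincrease} and the definition of $C_{\tilde\theta}$ this costs $\epsilon\,C_{\tilde\theta}\,|u_0|_{BV}$ in space and $\omega_u(\delta)$ in time, and analogously for the initial layer one gets $\|u_0 - v_0\|_{L^1}$ plus $\epsilon\,C_{\tilde\theta}\,|u_0|_{BV} + \omega_u(\delta) \vee \omega_v(\delta)$ type terms. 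Collecting, the $L^1$-distance at time $T$ is dominated by the seven-term right-hand side of \eqref{kuz}, with the numerical factors $2$ in front of the moduli and $\epsilon C_{\tilde\theta}|u_0|_{BV}$ matching after one keeps track of the initial \emph{and} terminal approximations.

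The step I expect to be the main obstacle is the careful treatment of the two nonlocal terms — ensuring that moving $\Levy^{\mu^\ast}_r$ onto $\phi^{\epsilon,\delta}$ in the $y$-variable and keeping the $\Levy^{\mu,r}[A(u(\cdot,s))](y)$ term intact is done without spurious sign errors or measurability gaps, given that $\mu$ may be singular at the origin. Here I would lean on Remark~\ref{modif:rem}\eqref{rem-ref-l}: $\Levy^{\mu^\ast}_r[\phi^{\epsilon,\delta}(x,t,\cdot,s)] \in C_c^\infty$ so that term is harmless, while $\Levy^{\mu,r}[A(u(\cdot,s))] \in C([0,T];L^1)$ so the product against the bounded $\sgn(\cdot)$ and the smooth compactly supported $\phi^{\epsilon,\delta}$ is integrable on $Q_T^2$. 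A secondary technical point is that the entropy inequality is stated for \emph{nonnegative} $\phi \in C_c^\infty(\R^{d+1})$; since $\phi^{\epsilon,\delta}(x,t,\cdot,\cdot) \ge 0$ and is compactly supported and smooth for each fixed $(x,t)$, this is satisfied, and Fubini (justified by the $L^1 \cap L^\infty$ bounds and compact supports in the relevant variables) permits the integration in $(x,t)$ to be carried out after applying the inequality. Once these points are settled, no genuinely new difficulty arises and the inequality \eqref{kuz} follows by assembling the pieces.
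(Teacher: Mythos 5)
Your proposal is correct and follows essentially the same route as the paper: fix $(x,t)$, apply the entropy inequality \eqref{entropy_ineq} for $u$ with $k=v(x,t)$ and test function $\phi^{\epsilon,\delta}(x,t,\cdot,\cdot)$, integrate over $(x,t)\in Q_T$ using $\partial_s\phi^{\epsilon,\delta}=-\partial_t\phi^{\epsilon,\delta}$, $D_y\phi^{\epsilon,\delta}=-D_x\phi^{\epsilon,\delta}$ and the symmetry of $q_f$, and then absorb the initial/terminal layers via the classical Kuznetsov mollifier-collapsing estimate, which is exactly how the paper produces the terms $\|u_0-v_0\|_{L^1}+\epsilon\,C_{\tilde\theta}\,|u_0|_{BV}+2\,\omega_u(\delta)\vee\omega_v(\delta)$. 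The measurability/integrability remarks for the two nonlocal terms also match the paper's justification via Remark~\ref{modif:rem}.
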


\begin{remark}\*
\begin{enumerate}
\item The error in time only depends on the moduli of continuity
  of~$u$ and~$v$ at~$t=0$ and~$t=T$.
Here we simply take the global-in-time moduli of
continuity~$\omega_u(\delta)$ and~$\omega_v(\delta)$, since this is
sufficient in our settings.
\item
When $A =  0$ or $\mu = 0$ this lemma reduces to the well-known
Kuznetsov lemma \cite{Kuznetsov} for multidimensional scalar
conservation laws.
\item Notice that the~$\Levy^{\mu^\ast}_r$-term vanishes when $r \rightarrow 0$, see Lemma~\ref{Kuz:lem}.
\item Lemma
\ref{lem:kuznetsov} has many applications. In this paper  and in
  \cite{AlCiJa10}
  we focus on continuous dependence results and error estimates for
  the vanishing viscosity method. Then in \cite{Cifani/Jakobsen2}, we
  will use the lemma to obtain error estimates for numerical
  approximations of \eqref{1}. 
\end{enumerate}
\end{remark}

In this paper we apply Lemma~\ref{lem:kuznetsov} to compare the
entropy solution~$u$ of~\eqref{1} with the entropy solution~$v$
of~\eqref{bb1}. This is our second main result, and we present it
in the two theorems below. The first focuses on the dependence on
the nonlinearities (with~$\mu=\nu$) and the second one on the L\'evy
measure (with~$A=B$).

\begin{theorem}\emph{(Continuous dependence on the nonlinearities)}\label{th:nonlin}
Let~$u$ and~$v$ be the entropy solutions of~\eqref{1}
and~\eqref{bb1} respectively with data sets~$(f,A,u_0,\mu) $ and
$(g,B,v_0,\nu=\mu) $ satisfying~\eqref{flux}--\eqref{levy-measure}.
Then for all $T,r>0$,
\begin{equation}\label{nat:first-app}
\begin{split}
& \|u-v\|_{C\left([0,T];L^{1}\right)} \leq\|u_0-v_0\|_{L^{1}(\mathbb{R}^d)}+|u_0|_{BV(\R^d)} \, T \, \|f'-g'\|_{L^\infty(\R,\R^d)}\\
& \quad + |u_0|_{BV(\R^d)} \,\sqrt{c_d \,  T  \int_{0<|z|\leq r}
|z|^2 \, \dif \mu(z)
 \, \|A'-B'\|_{L^\infty(\R)}}\\
& \quad + |u_0|_{BV(\R^d)} \, T\, \left|\int_{r \wedge 1 < |z|
\leq r \vee 1} z \, \dif \mu(z)\right|
 \,
\|A'-B'\|_{L^\infty(\R)}\\
& \quad +  T \, \int_{|z|> r} \|u_0(\cdot+z)-u_0
\|_{L^1(\R^d)} \, \dif \mu(z) \, \|A'-B'\|_{L^\infty(\R)},
\end{split}
\end{equation}
where $c_d=\frac{4d^2}{d+1}$.
\end{theorem}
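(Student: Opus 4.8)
The plan is to combine the Kuznetsov-type estimate of Lemma~\ref{lem:kuznetsov} with the entropy inequality satisfied by $v$. Fix $T,r>0$ and, for the moment, parameters $\epsilon>0$ and $0<\delta<T$, and apply Lemma~\ref{lem:kuznetsov} to get \eqref{kuz}. Since $v$ is the entropy solution of \eqref{bb1} with data $(g,B,v_0,\mu)$, it satisfies the entropy inequality \eqref{entropy_ineq} with $(g,B,\mu)$ in place of $(f,A,\mu)$; test it with Kruzhkov level $k=u(y,s)$ and test function $x'\mapsto\phi^{\epsilon,\delta}(x',t,y,s)$ in the $x$--variables, and integrate over $(y,s)\in Q_T$. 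This produces a nonnegative quantity, which I add to the right-hand side of \eqref{kuz}. Because $\phi^{\epsilon,\delta}(x,t,y,s)=\theta_\delta(t-s)\,\bar\theta_\epsilon(x-y)$, the $\partial_t\phi^{\epsilon,\delta}$--terms of the two inequalities are opposite and cancel, and so do the two time--$T$ and the two time--$0$ boundary double integrals. What remains is
\[
\|u(T)-v(T)\|_{L^1}\le\|u_0-v_0\|_{L^1}+\epsilon\,C_{\tilde\theta}\,|u_0|_{BV}+2\,\omega_u(\delta)\vee\omega_v(\delta)+T_{\mathrm{cv}}+T_{\mathrm{dr}}+T_{\mathcal{L}_r}+T_{\mathrm{out}},
\]
where $T_{\mathrm{cv}}=\iint(q_g-q_f)(v,u)\cdot D_x\phi^{\epsilon,\delta}\,\dif w$ (with $q_g$ as in \eqref{nat:def-flux}, $g$ replacing $f$), $T_{\mathrm{dr}}=\iint b_r^{\mu^\ast}\big(|B(v)-B(u)|-|A(v)-A(u)|\big)\cdot D_x\phi^{\epsilon,\delta}\,\dif w$, $T_{\mathcal{L}_r}$ is the combined singular term $\iint\big(|A(v)-A(u)|\,\Levy_r^{\mu^\ast}[\phi^{\epsilon,\delta}(x,t,\cdot,s)](y)+|B(v)-B(u)|\,\Levy_r^{\mu^\ast}[\phi^{\epsilon,\delta}(\cdot,t,y,s)](x)\big)\dif w$, and $T_{\mathrm{out}}=\iint\sgn(v-u)\big(\Levy^{\mu,r}[B(v(\cdot,t))](x)-\Levy^{\mu,r}[A(u(\cdot,s))](y)\big)\phi^{\epsilon,\delta}\,\dif w$.

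For the three lower-order remainders I would argue as follows. In $T_{\mathrm{cv}}$ use $D_x\bar\theta_\epsilon(x-y)=-D_y\bar\theta_\epsilon(x-y)$ and integrate by parts in $y$; since $b\mapsto(q_g-q_f)(a,b)$ is Lipschitz with constant $\|f'-g'\|_{L^\infty}$ and $u(\cdot,s)\in BV$, the chain rule for $BV$ functions gives $\big|D_y[(q_g-q_f)(v(x,t),u(\cdot,s))]\big|\le\|f'-g'\|_{L^\infty}\,|D_yu(\cdot,s)|$ as measures; integrating the resulting $\bar\theta_\epsilon$--average against this measure makes $\epsilon$ disappear and the $s,t$--integration contributes only a factor $T$, so $|T_{\mathrm{cv}}|\le|u_0|_{BV}\,T\,\|f'-g'\|_{L^\infty}$. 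For $T_{\mathrm{dr}}$, rewrite by \eqref{nat:key} (used for $A$ and for $B$) $|B(v)-B(u)|-|A(v)-A(u)|=\sgn(v-u)\big((B-A)(v)-(B-A)(u)\big)$, which is Lipschitz in the $u$--slot with constant $\|A'-B'\|_{L^\infty}$; the same integration by parts, with $b_r^{\mu^\ast}$ identified through \eqref{nat:measure-dual}, yields $|T_{\mathrm{dr}}|\le|u_0|_{BV}\,T\,\big|\int_{r\wedge1<|z|\le r\vee1}z\,\dif\mu(z)\big|\,\|A'-B'\|_{L^\infty}$. For $T_{\mathrm{out}}$ perform the Kruzhkov change of variables $(x,y)\mapsto(x-z,y-z)$, which leaves $\phi^{\epsilon,\delta}$ invariant, and use the pointwise inequality $\sgn(a-k)\big(A(a')-A(a)\big)\le|A(a')-k|-|A(a)-k|$ (a consequence of \eqref{nat:key}) to trade the shifted sign for the unshifted one up to translates of $A(u)$ and $A(v)$; the $A=B$ part then telescopes against the corresponding pieces of $T_{\mathcal{L}_r}$ and $T_{\mathrm{dr}}$, reconstructing the full operator $\Levy^\mu$ and being absorbed as in the contraction proof of \cite{Cifani/Jakobsen}, while the $A\ne B$ remainder is bounded by $\|A'-B'\|_{L^\infty}$ times translates, giving $\le T\int_{|z|>r}\|u_0(\cdot+z)-u_0\|_{L^1}\,\dif\mu(z)\,\|A'-B'\|_{L^\infty}$.

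The decisive term is $T_{\mathcal{L}_r}$. I would move the operator back onto the solutions, writing $T_{\mathcal{L}_r}=\iint\phi^{\epsilon,\delta}\big(\Levy_r^\mu[|A(v(x,t))-A(u(\cdot,s))|](y)+\Levy_r^\mu[|B(v(\cdot,t))-B(u(y,s))|](x)\big)\dif w$, and split $B=A+(B-A)$: the ``symmetric'' $A$--part is combined with the $A=B$ pieces of $T_{\mathrm{out}}$ and $T_{\mathrm{dr}}$ as above and absorbed (this is precisely where the contraction of Theorem~\ref{th:well-posedness} comes from in the limit), while the remaining $(B-A)$--part is estimated via the second-order Taylor expansion of $\Levy^\mu_r$: the $O(|z|^2)$ gain pairs with $\|D^2\bar\theta_\epsilon\|_{L^1}\le\epsilon^{-2}\|D^2\tilde\theta_d\|_{L^1}$ and with $|u(\cdot,s)|_{BV}\le|u_0|_{BV}$, yielding a bound of the form $\frac{c}{\epsilon}\,|u_0|_{BV}\,T\int_{0<|z|\le r}|z|^2\,\dif\mu(z)\,\|A'-B'\|_{L^\infty}$ for a dimensional constant $c$. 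Choosing $\epsilon$ of order $\big(T\int_{0<|z|\le r}|z|^2\dif\mu\,\|A'-B'\|_{L^\infty}\big)^{1/2}$ to balance this against the term $\epsilon\,C_{\tilde\theta}\,|u_0|_{BV}$, the two combine into $|u_0|_{BV}\sqrt{c_d\,T\int_{0<|z|\le r}|z|^2\,\dif\mu(z)\,\|A'-B'\|_{L^\infty}}$, the explicit constant $c_d=\tfrac{4d^2}{d+1}$ emerging once $C_{\tilde\theta}$ and $\|D^2\tilde\theta_d\|_{L^1}$ are tracked with an optimal choice of the mollifier $\tilde\theta_d$. Finally let $\delta\downarrow0$ to kill $2\,\omega_u(\delta)\vee\omega_v(\delta)$; since every term on the right is nondecreasing in $T$ and $T>0$ was arbitrary, replacing $T$ by $t$ and taking the supremum over $t\in[0,T]$ upgrades the left-hand side to $\|u-v\|_{C([0,T];L^1)}$, which is \eqref{nat:first-app}.

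I expect the main obstacle to be making the $T_{\mathcal{L}_r}$ estimate rigorous: one must route the mollifier $\bar\theta_\epsilon$ through the $BV$ and translation bounds \emph{without} destroying the $O(|z|^2)$ gain (a crude triangle inequality would leave $\int|z|\,\dif\mu$, which need not be finite), and one must cleanly separate the $\|A'-B'\|_{L^\infty}$--dependent part from the symmetric part that has to be absorbed via the reconstruction of $\Levy^\mu$ and the arguments of \cite{Cifani/Jakobsen}; pushing the constants through the $\epsilon$--optimization to land on the sharp value $c_d=\tfrac{4d^2}{d+1}$ is the last, bookkeeping-heavy step.
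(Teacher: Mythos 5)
Your setup (Kuznetsov lemma plus the entropy inequality for $v$ with $k=u(y,s)$, cancellation of the time and boundary terms, and the standard treatment of the convection term) matches the paper, and your crude Lipschitz bound on the $(B-A)$-part of the outer operator $\Levy^{\mu,r}$ is legitimate there because $\mu(\{|z|>r\})<\infty$ (though as written it produces translates of $v_0$, not of $u_0$; to land on the stated estimate you must put the difference nonlinearity on $u$, e.g.\ by splitting $\Levy^{\mu,r}[B(v)]-\Levy^{\mu,r}[A(u)]=(\Levy^{\mu,r}[B(v)]-\Levy^{\mu,r}[B(u)])+\Levy^{\mu,r}[(B-A)(u)]$). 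The decisive step, however, does not close. You claim that the ``symmetric $A$-part'' of $T_{\mathcal{L}_r}$ telescopes against the $A=B$ pieces of $T_{\mathrm{out}}$ and is ``absorbed as in the contraction proof.'' That absorption mechanism only exists in the limit $r\to 0$: for the fixed $r$ of the theorem, the two terms $\iint|A(v)-A(u)|\,\theta_\delta\big(\Levy_r^{\mu}[\bar\theta_\epsilon]+\Levy_r^{\mu^\ast}[\bar\theta_\epsilon]\big)(x-y)\,\dif w$ have the same sign and add up; they are bounded by $C_\epsilon\int_{0<|z|\le r}|z|^2\dif\mu$ (Lemma~\ref{Kuz:lem}) with \emph{no} factor $\|A'-B'\|_{L^\infty}$, and the nonpositive outer term from Lemma~\ref{nat:pro-kato-d} cannot cancel them. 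After optimizing in $\epsilon$ your scheme therefore leaves a contribution of order $\big(\int_{0<|z|\le r}|z|^2\dif\mu\big)^{1/3}$ or worse that does not vanish when $A=B$ -- contradicting the $L^1$-contraction -- so it cannot yield \eqref{nat:first-app}.

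What is missing is the paper's two-parameter and monotone-splitting structure. The entropy cut-off $r$ is sent to $0$ (killing exactly the non-small singular terms above), and a \emph{second} parameter $r_1$ -- which is the $r$ appearing in the theorem -- splits the outer operator $\Levy^{\mu,r}=\Levy^{\mu_1,r}+\Levy^{\mu,r_1}$. On the intermediate annulus $r<|z|\le r_1$ one needs simultaneously the $O(|z|^2)$ gain (so the operator must be moved onto $\bar\theta_\epsilon$ via the Kato inequality of Lemma~\ref{nat:ipp}) and the smallness in $\|A'-B'\|_{L^\infty}$ (so the nonlinearity being transported must be the difference). Since $A-B$ need not be monotone, Kato cannot be applied to it directly -- this is precisely the obstruction the paper flags -- and the fix is the decomposition $\R=E_+\cup E_-$ with $C_\pm=\pm(A_\pm-B_\pm)$ monotone, $|C_\pm(u)|_{BV}\le\|A'-B'\|_{L^\infty}|u|_{BV}$, applying Kato with $B_+$ on the $u$-side and $A_-$ on the $v$-side. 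Your ``split $B=A+(B-A)$'' bypasses this and has no valid substitute. Two smaller corrections: the singular estimate requires exactly one integration by parts, pairing $\|D\bar\theta_\epsilon\|_{L^1}\sim\epsilon^{-1}$ with the $BV$ seminorm (your $\|D^2\bar\theta_\epsilon\|_{L^1}\sim\epsilon^{-2}$ paired with $BV$ is dimensionally inconsistent and would give $\epsilon^{-2}\|u-v\|_{L^1}$ instead); and the constant $c_d=\frac{4d^2}{d+1}$ comes from letting $\tilde\theta_d$ tend to the normalized indicator of the unit ball, with $C_{\tilde\theta}\to\frac{2d}{d+1}$ and $\int|D\tilde\theta_d|\to d$, after the $\min_\epsilon(\epsilon a+b/\epsilon)=2\sqrt{ab}$ optimization.
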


\begin{theorem}\emph{(Continuous dependence on the L\'evy measure)}\label{nat:th-levy}
Let~$u$ and~$v$ be the  entropy solutions of~\eqref{1}
and~\eqref{bb1} respectively with data sets~$(f,A,u_0,\mu) $
and~$(g,B=A,v_0,\nu) $
satisfying~\eqref{flux}--\eqref{levy-measure}. Then for
all~$T,r>0$,
\begin{equation}\label{nat:second-app}
\begin{split}
& \|u-v\|_{C\left([0,T];L^{1}\right)} \leq\|u_0-v_0\|_{L^{1}(\mathbb{R}^d)}+|u_0|_{BV(\R^d)} \, T \,  \|f'-g'\|_{L^{\infty}(\R,\R^d)}\\
& \quad + |u_0|_{BV(\R^d)} \,\sqrt{c_d \,  T\, \|A'\|_{L^\infty(\R)}  \int_{0<|z|\leq r} |z|^2 \,\dif |\mu-\nu|(z)}\\
& \quad + |u_0|_{BV(\R^d)} \, T\, \|A'\|_{L^\infty(\R)} \,
\left|\int_{r \wedge 1 < |z| \leq r \vee 1} z \, \dif
(\mu-\nu)(z)\right|
\\
& \quad +  T \, \|A'\|_{L^\infty(\R)} \, \int_{|z|> r}
\|u_0(\cdot+z)-u_0 \|_{L^1(\R^d)} \, \dif |\mu-\nu|(z),
\end{split}
\end{equation}
where $c_d=\frac{4d^2}{d+1}$.
\end{theorem}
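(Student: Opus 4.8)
The plan is to apply the Kuznetsov type Lemma~\ref{lem:kuznetsov} with $v$ being the entropy solution of~\eqref{bb1} (with $B=A$, $\nu$ possibly $\neq\mu$), and then bound each of the seven terms on the right-hand side of~\eqref{kuz}. The first two terms are already in the desired form. For the modulus of continuity term $2\,\omega_u(\delta)\vee\omega_v(\delta)$, I would use the $BV$ bound and standard $L^1$-in-time regularity for entropy solutions of~\eqref{1} and~\eqref{bb1} to get $\omega_u(\delta),\omega_v(\delta)\lesssim \sqrt{\delta}\,(\cdots)$ or a similar bound and then optimize in $\delta$ at the end. The heart of the argument is the remaining five terms, which I would combine with the entropy inequality~\eqref{entropy_ineq} for $v$ — tested against $\phi^{\epsilon,\delta}(x,\cdot,y,\cdot)$ with $(x,y)$ frozen and integrated — using the doubling-of-variables technique of Kruzhkov. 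Adding the entropy inequalities for $u$ and for $v$ with the symmetric test function $\phi^{\epsilon,\delta}$ makes the pure convection term ($q_f$ vs.\ $q_g$) and the nonlocal terms partially cancel, leaving error terms governed by $\|f'-g'\|_{L^\infty}$ and by $|\mu-\nu|$.

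Concretely, after doubling, the convection contribution produces a term of the form $\iint |f(v(x,t))-g(v(x,t))|\,|D_x\phi^{\epsilon,\delta}|$, which by $f(0)=g(0)=0$ and the $BV$ bound on $u$ (or $v$) yields the $|u_0|_{BV}\,T\,\|f'-g'\|_{L^\infty}$ term; here one must be careful that the $\epsilon^{-1}$ from differentiating $\bar\theta_\epsilon$ is compensated by $\|f(v)-g(v)\|_{L^\infty}$ being an $O(1)$ quantity only once we factor $\|f'-g'\|$, so that the $\epsilon$ can be sent to $0$ — in fact the $\epsilon$-dependence here should cancel because $\iint |D_x\phi^{\epsilon,\delta}|$ against a $BV$ function is $O(1)$ in $\epsilon$. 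For the diffusion terms, since $B=A$, after the change of variables recalled in Section~\ref{nat:sec-preliminaries} the $\Levy^{\mu^\ast}_r$ and $\Levy^{\mu,r}$ contributions from $u$ and $v$ should cancel up to a remainder carrying the factor $\dif(\mu-\nu)$; the three $\mu$-dependent terms in~\eqref{nat:second-app} then arise exactly from the three pieces $\Levy_r$, $\Div(b_r\,\cdot)$, and $\Levy^{\,\cdot\,,r}$ of the splitting~\eqref{nat:sum-levy}, each applied to $|A(v)-A(u)|$ which is Lipschitz in its arguments with constant $\|A'\|_{L^\infty}$. The small-jump part $\Levy_r$ gives, via a Taylor expansion and the $BV$-regularity of $u$, a term bounded by $|u_0|_{BV}\sqrt{c_d T\|A'\|_{L^\infty}\int_{|z|\le r}|z|^2\,\dif|\mu-\nu|}$ — the square root coming from optimizing the auxiliary $\epsilon$ against the second-order Taylor remainder — the drift part $b_r$ gives the term with $\big|\int_{r\wedge1<|z|\le r\vee1} z\,\dif(\mu-\nu)\big|$, and the large-jump part $\Levy^{\,\cdot\,,r}$ gives $T\|A'\|_{L^\infty}\int_{|z|>r}\|u_0(\cdot+z)-u_0\|_{L^1}\,\dif|\mu-\nu|$ using translation estimates controlled by $|u_0|_{BV}$.

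The main obstacle I anticipate is the careful bookkeeping in the doubling-of-variables step for the nonlocal operators: one must justify passing the $\Levy$ operators under the double integral, exploit the symmetry $\mu\leftrightarrow\mu^\ast$ together with the change of variables $(z,x,t)\to(-z,x+z,t)$ to align the $u$- and $v$-contributions, and then estimate the \emph{difference} of two nonlocal operators with the same symbol $A$ but different measures $\mu$ and $\nu$ — this difference is naturally controlled by $|\mu-\nu|$ but only after splitting at radius $r$ exactly as in~\eqref{nat:sum-levy}, since the full $\Levy^\mu-\Levy^\nu$ need not make sense termwise when $\mu,\nu$ are singular. The singular part $\Levy_r$ is where the quadratic smallness $|z|^2$ is needed and where the Cauchy–Schwarz / $\sqrt{\ }$ estimate enters; I expect this to require the preliminary results on the nonlocal operator announced for Section~\ref{nat:sec-pro-nl} (in particular the vanishing of the $\Levy^{\mu^\ast}_r$-term as $r\to0$) together with a quantitative version using $|u|_{L^\infty(0,T;BV)}\le|u_0|_{BV}$. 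Once all seven terms are bounded, I would send $\epsilon\to0$ (it disappears from the final estimate, or is absorbed), choose $\delta$ optimally, and finally let the auxiliary parameters tidy up to obtain~\eqref{nat:second-app}; note the statement is uniform in $r>0$, so no optimization in $r$ is performed — it is left as a free parameter. The constant $c_d=\frac{4d^2}{d+1}$ should emerge from the explicit second-moment computation $\int_{\R^d}|x|\,\tilde\theta_d(x)\,\dif x$–type integrals appearing in $C_{\tilde\theta}$ combined with the Taylor remainder bound for $\Levy_r$.
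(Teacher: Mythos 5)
Your overall strategy (Kuznetsov's lemma plus the doubled entropy inequality for $v$, a radius splitting of the nonlocal operator, a second-order Taylor expansion against $\bar\theta_\epsilon$ for the small jumps, translation estimates for the large jumps) matches the paper's, and several details you flag are right (no optimization in $r$; the origin of $c_d$). But there is one genuine gap, and it is the central difficulty of this theorem: you never explain how the Kato-type inequalities can be applied when the relevant measure is the \emph{signed} measure $\mu-\nu$. After doubling, the diffusion terms leave you with $\sgn(v-u)\,\big(\Levy^{\nu,r}[A(v(\cdot,t))](x)-\Levy^{\mu,r}[A(u(\cdot,s))](y)\big)$; to extract a remainder ``carrying $\dif(\mu-\nu)$'' acting on a single function you must first use Lemma~\ref{nat:pro-kato-d} to align the two arguments, and then use Lemma~\ref{nat:ipp} to move the difference operator onto the test function, which is what makes the second-order Taylor expansion (hence the factor $\int|z|^2\,\dif|\mu-\nu|$ and the square root after optimizing in $\epsilon$) available. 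Both lemmas require a \emph{nonnegative} measure --- their proofs hinge on integrating the inequality $\sgn(u-k)\,(A(w)-A(k))\le|A(w)-A(k)|$ against a nonnegative measure --- so neither applies to $\mu-\nu$ directly. If instead you take absolute values on the signed remainder pointwise, you only obtain the first-order bound $\int_{0<|z|\le r_1}|z|\,\dif|\mu-\nu|(z)$, which can be infinite for general L\'evy measures. So this is not mere bookkeeping: the step as described would fail.

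The paper's fix, which your proposal is missing, is the decomposition \eqref{nat:Epm-m}--\eqref{nat:cut-levy}: choose Borel sets $E_\pm$ carrying $(\mu-\nu)^\pm$, restrict $\mu,\nu$ to $E_\pm$ to get $\mu_\pm,\nu_\pm$ with $\pm(\mu_\pm-\nu_\pm)=(\mu-\nu)^\pm\ge0$, and apply Lemma~\ref{nat:pro-kato-d} once with $\nu_+$ and once with $\mu_-$ so that the $+$ part of the difference ends up as $\sgn(u-v)\,\Levy^{\mu_+-\nu_+,r}[A(u(\cdot,s))](y)$ and the $-$ part as $\sgn(v-u)\,\Levy^{-(\mu_--\nu_-),r}[A(v(\cdot,t))](x)$ --- each with a nonnegative measure and with the ``right'' function under the operator, which is exactly what makes Lemma~\ref{nat:ipp} applicable afterwards. (This is the measure analogue of the monotonicity obstruction the paper flags explicitly in the proof of Theorem~\ref{th:nonlin}.) Two smaller corrections: the $\Levy^{\mu^\ast}_r$- and $\Levy^{\nu^\ast}_r$-terms of the two entropy inequalities are not where the square-root term comes from --- by Lemma~\ref{Kuz:lem} they are bounded by $C_\epsilon\int_{0<|z|\le r}|z|^2\,\dif\mu$ and vanish as $r\to0$; the square root arises from the intermediate range $r<|z|\le r_1$ of the Kato-processed term, balanced against the $\epsilon\,C_{\tilde\theta}\,|u_0|_{BV}$ term of Kuznetsov's lemma. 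And there is no need to quantify $\omega_u(\delta)$ or optimize in $\delta$: one simply lets $\delta\to0$, since $\delta$ appears nowhere else in the estimate.
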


\begin{remark}
In the error estimates of Theorems~\ref{th:nonlin} and~\ref{nat:th-levy}, there are 3 terms accounting for the dependence on the 
fractional diffusion term in~\eqref{1}: One term accounts for the
behavior near the singularity of $\mu$ at $z=0$ (the
integral over $0<|z| \leq r$), another term accounts
for the behavior near infinity (the integral over $|z| \geq r$), and
the last term  (the integral over $r \wedge 1 < |z| \leq r \vee 1$)
is a drift term that is only present for nonsymmetric measures
$\mu$. The square root estimate for the singular term is similar to
 estimates for 2nd derivative terms in the local case and
for non-local equations with different structure,
cf. e.g. \cite{Chen/Karlsen,Jakobsen/Karlsen,KaUl10}.
\end{remark}

\begin{remark}\label{rem:splitting}
Since the initial data is $L^1 \cap BV$, an application of Fubini's
theorem shows that for any  $\hat r>r>0$,
\begin{equation*}
\begin{split}
&\int_{|z|>r}
\|u_0(\cdot+z)-u_0 \|_{L^1(\R^d)} \, \dif \mu(z)\\
&\leq |u_0|_{BV(\R^d)}\int_{r<|z| \leq \hat r} |z| \, \dif\mu(z)
+2\|u_0\|_{L^1(\R^d)} \int_{|z| > \hat r} \dif\mu(z).
\end{split}
\end{equation*}
\end{remark}

 From Theorems \ref{th:nonlin} and \ref{nat:th-levy}  we can easily
find a general continuous dependence estimate
  when both $A$ and $\mu$ are different from $B$ and $\nu$,
  respectively. E.g. we can take an intermediate
  solution $w$ of $w_t+\Div \, f(w)=\Levy^\mu[B(w)]$ and $w(0)=u_0$,
  and use
  the triangle inequality. Using this idea we can show that
  the following estimates always have to hold:
\begin{corollary}\label{cor_main}
Let~$u$ and~$v$ be the  entropy solutions of \eqref{1} and
\eqref{bb1} respectively with data sets $(f,A,u_0,\mu) $ and
$(g,B,v_0,\nu) $ satisfying~\eqref{flux}--\eqref{levy-measure}. Then for all~$T>0$
\begin{align}
\label{main1}
\begin{split}
&\|u-v\|_{C\left([0,T];L^{1}\right)}\leq
\|u_0-v_0\|_{L^{1}(\mathbb{R}^d)}+|u_0|_{BV(\R^d)} \, T \,
\|f'-g'\|_{L^{\infty}(\R,\R^d)}\\
&\quad +  C\, (T^{\frac12}\vee T) \,\bigg(\sqrt{
\|A'-B'\|_{L^\infty(\R)}} +\sqrt{ \int_{\R^d \setminus \{0\}}
|z|^2\wedge1 \,\dif
  |\mu-\nu|(z)}\bigg)
\end{split}
\end{align}
where $C$ only depends on $d$ and the data.
Moreover, if in addition
$$\int_{\R^d \setminus \{0\}}|z|\wedge 1\,\dif
 \mu(z)+\int_{\R^d \setminus \{0\}}|z|\wedge 1\,\dif \nu(z)<+\infty,$$
then we have the better estimate
\begin{align}
\label{main2}
\begin{split}
&\|u-v\|_{C\left([0,T];L^{1}\right)} \leq
\|u_0-v_0\|_{L^{1}(\mathbb{R}^d)}+|u_0|_{BV(\R^d)} \, T \,
\|f'-g'\|_{L^{\infty}(\R,\R^d)}\\
&\quad +  C T \,\bigg(\|A'-B'\|_{L^\infty(\R)} + \int_{\R^d
\setminus \{0\}} |z|\wedge1 \,\dif |\mu-\nu|(z)\bigg),
\end{split}
\end{align}
where $C$ only depends on the data.
\end{corollary}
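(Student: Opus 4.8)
The plan is to derive Corollary~\ref{cor_main} directly from Theorems~\ref{th:nonlin} and~\ref{nat:th-levy} by a triangle-inequality argument, after choosing the free parameter $r$ appropriately in each estimate. First I would introduce the intermediate entropy solution $w$ of
$$
\partial_t w+\Div f(w)=\Levy^\mu[B(w)],\qquad w(0)=u_0,
$$
which exists and is unique by Theorem~\ref{th:well-posedness} since the data $(f,B,u_0,\mu)$ satisfies \eqref{flux}--\eqref{levy-measure} (recall $B$ is nondecreasing Lipschitz with $B(0)=0$). Then $u$ and $w$ differ only through the nonlinearity $A$ versus $B$ (same $f$, same $\mu$, same initial data), so Theorem~\ref{th:nonlin} applies to the pair $(u,w)$ with $\|u_0-u_0\|_{L^1}=0$ and $\|f'-f'\|_{L^\infty}=0$; while $w$ and $v$ differ through the flux $g$, the L\'evy measure $\nu$, and the initial data $v_0$, but share the same nonlinearity $B$, so Theorem~\ref{nat:th-levy} applies to the pair $(w,v)$. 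Adding the two bounds via $\|u-v\|_{C([0,T];L^1)}\le\|u-w\|_{C([0,T];L^1)}+\|w-v\|_{C([0,T];L^1)}$ gives a combined estimate in which the $\|u_0-v_0\|_{L^1}$ and $T\|f'-g'\|_{L^\infty}$ terms appear exactly once each, and the remaining terms are the two ``diffusion'' contributions, one governed by $\|A'-B'\|_{L^\infty}$ and the measure $\mu$, the other by $\|A'\|_{L^\infty}$ (here equal to $\|B'\|$, bounded by the data) and the measure $|\mu-\nu|$.

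Next I would handle the parameter $r$ and the splitting of the ``tail'' term. Using Remark~\ref{rem:splitting} with $\hat r=1$, the term $\int_{|z|>r}\|u_0(\cdot+z)-u_0\|_{L^1}\,\dif\mu(z)$ is bounded by $|u_0|_{BV}\int_{r<|z|\le 1}|z|\,\dif\mu(z)+2\|u_0\|_{L^1}\int_{|z|>1}\dif\mu(z)$ when $r<1$; combined with the drift term (which for $r<1$ is $|\int_{r<|z|\le 1}z\,\dif\mu(z)|\le\int_{r<|z|\le1}|z|\,\dif\mu(z)$) and the singular term $\sqrt{c_dT\int_{0<|z|\le r}|z|^2\dif\mu(z)\,\|A'-B'\|_\infty}$, I would take $r=1$: then the singular term becomes $\sqrt{c_dT\int_{0<|z|\le1}|z|^2\dif\mu(z)\,\|A'-B'\|_\infty}$, the drift term vanishes ($r\wedge1=r\vee1=1$), and the tail term is $T\int_{|z|>1}\|u_0(\cdot+z)-u_0\|_{L^1}\dif\mu(z)\,\|A'-B'\|_\infty\le 2T\|u_0\|_{L^1}\int_{|z|>1}\dif\mu(z)\,\|A'-B'\|_\infty$. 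Since $\sqrt{ab}\le\frac12(a+b)$ and $\|A'-B'\|_\infty$ can be absorbed — actually the cleanest route is: $\sqrt{\,T\int_{0<|z|\le1}|z|^2\dif\mu(z)\,\|A'-B'\|_\infty\,}\le (T^{1/2}\vee T)\big(\sqrt{\|A'-B'\|_\infty}+\sqrt{\int|z|^2\wedge1\,\dif\mu(z)}\big)$ up to a dimensional constant, using $\sqrt{xy}\le\sqrt x\cdot\sqrt y$ and $\sqrt y\le(T^{1/2}\vee T)\cdot(\text{const})$ is not quite it — instead I would bound $\sqrt{T\,\alpha\,\beta}\le(T^{1/2}\vee T)(\alpha+\beta)$ is false; rather use $T\alpha\le T\alpha$, $\sqrt{T\alpha\beta}\le\sqrt{T}\cdot\frac{\alpha+\beta}{2}\le(T^{1/2}\vee T)(\sqrt\alpha+\sqrt\beta)^2/2$. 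The honest bookkeeping: write the singular contribution as $\le|u_0|_{BV}\sqrt{c_dT}\cdot\sqrt{\|A'-B'\|_\infty}\cdot\sqrt{\int_{0<|z|\le1}|z|^2\dif\mu(z)}$ and use $\sqrt{xy}\le x+y$ to split off $\sqrt{\|A'-B'\|_\infty}$ and $\sqrt{\int|z|^2\wedge1\,\dif\mu(z)}$; the linear-in-$T$ tail terms $T\|A'-B'\|_\infty\|u_0\|_{L^1}\int_{|z|>1}\dif\mu(z)$ are likewise dominated, after using $x\le\sqrt x$ for small $x$ or simply $\int_{|z|>1}\dif\mu(z)\le\int|z|^2\wedge1\,\dif\mu(z)$, by $(T^{1/2}\vee T)(\|A'-B'\|_\infty+\int|z|^2\wedge1\,\dif\mu(z))$ up to absorbing constants into $C$. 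Doing the same for the $(w,v)$ pair with $|\mu-\nu|$ in place of $\mu$ and $\|A'\|_\infty\le$ (data constant) in place of $\|A'-B'\|_\infty$, and noting $\int|z|^2\wedge1\,\dif\mu(z)\le$ (data constant) so that the $\mu$-only pieces coming from the first pair are absorbed, I obtain \eqref{main1} with $C$ depending only on $d$ and the data.

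For the improved estimate \eqref{main2} under the extra assumption $\int|z|\wedge1\,\dif\mu+\int|z|\wedge1\,\dif\nu<\infty$, I would instead let $r\to0$ in Theorems~\ref{th:nonlin} and~\ref{nat:th-levy}. The singular term $\sqrt{c_dT\int_{0<|z|\le r}|z|^2\dif\mu(z)\,\|A'-B'\|_\infty}\to0$ as $r\to0$ by dominated convergence (the integrand is $\mu$-integrable by \eqref{levy-measure}); the drift term becomes, for $r<1$, $|u_0|_{BV}T\|A'-B'\|_\infty|\int_{r<|z|\le1}z\,\dif\mu(z)|$, which converges to $|u_0|_{BV}T\|A'-B'\|_\infty|\int_{0<|z|\le1}z\,\dif\mu(z)|$ and is bounded by $|u_0|_{BV}T\|A'-B'\|_\infty\int_{0<|z|\le1}|z|\,\dif\mu(z)$, finite by the extra hypothesis; and the tail term, via Remark~\ref{rem:splitting} with $\hat r=1$ again, is $\le T\|A'-B'\|_\infty\big(|u_0|_{BV}\int_{r<|z|\le1}|z|\,\dif\mu(z)+2\|u_0\|_{L^1}\int_{|z|>1}\dif\mu(z)\big)$, which converges as $r\to0$ to a quantity bounded by $T\|A'-B'\|_\infty(|u_0|_{BV}+2\|u_0\|_{L^1})\int_{\R^d\setminus\{0\}}|z|\wedge1\,\dif\mu(z)$. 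Collecting, the $\mu$-diffusion contribution from the first pair is $\le CT\|A'-B'\|_\infty\int|z|\wedge1\,\dif\mu(z)$, and since $\int|z|\wedge1\,\dif\mu(z)$ is a data constant this is $\le CT\|A'-B'\|_\infty$; doing the same for the $(w,v)$ pair with $|\mu-\nu|$ and $\|B'\|_\infty=\|A'\|_\infty\le$ data constant gives $\le CT\int|z|\wedge1\,\dif|\mu-\nu|(z)$. Adding yields \eqref{main2}. The main obstacle I anticipate is not conceptual but purely the bookkeeping of constants: one must verify that every coefficient that gets absorbed into $C$ ($c_d$, $|u_0|_{BV}$, $\|u_0\|_{L^1}$, $\|A'\|_\infty=\|B'\|_\infty$, $\int|z|^2\wedge1\,\dif\mu$, $\int|z|^2\wedge1\,\dif\nu$, and — for \eqref{main2} — $\int|z|\wedge1\,\dif\mu$, $\int|z|\wedge1\,\dif\nu$) genuinely depends only on $d$ and the data and not on $g$, $v_0$, or $\nu$, and that the elementary inequalities $\sqrt{xy}\le x+y$ and $\sqrt T\le T^{1/2}\vee T$, $T\le T^{1/2}\vee T$ are deployed so that no stray factor of $\|A'-B'\|_\infty$ or $\int|z|^2\wedge1\,\dif|\mu-\nu|$ ends up outside the parenthesis in \eqref{main1}.
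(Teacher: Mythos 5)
Your proposal is correct and follows essentially the same route as the paper's own (outlined) proof: the intermediate solution $w$ of $\partial_t w+\Div f(w)=\Levy^{\mu}[B(w)]$ with $w(0)=u_0$, the triangle inequality, the choice $r=1$ for \eqref{main1} and $r\to 0$, $\hat r=1$ with Remark~\ref{rem:splitting} for \eqref{main2}, and the elementary absorptions $|a-b|\leq\sqrt{|a|+|b|}\,\sqrt{|a-b|}$ and $|\mu-\nu|\leq\mu+\nu$. The only blemish is presentational: the correct splitting of the singular term is $\sqrt{xy}=\sqrt{x}\,\sqrt{y}$ with $\sqrt{y}$ absorbed into $C$ (not $\sqrt{xy}\leq x+y$), but your final bookkeeping lands on the right bound.
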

\begin{proof}[Outline of proof]
To prove \eqref{main1}, we use Theorems  \ref{th:nonlin} and
\ref{nat:th-levy} with $r=1$ and the triangle inequality. We
also use estimates like $|a-b|\leq \sqrt{|a|+|b|}\sqrt{|a-b|}$,
 $|\mu-\nu|\leq|\mu|+|\nu|$ etc. To prove \eqref{main2}, we also use
 Remark \ref{rem:splitting} and set $r=0$ and $\hat r=1$.
\end{proof}

\begin{remark}\*
\begin{enumerate}
\item All these estimates hold for arbitrary L{\'e}vy measures $\mu,\nu$ and even
  for strongly degenerate diffusions where $A,B$ may vanish on large
  sets. They are consistent (at least for the $|\mu-\nu|$ term)
with general results for nonlocal Hamilton-Jacobi-Bellman equations
in \cite{Jakobsen/Karlsen}. When $\mu,\nu$ have the special form
\eqref{frac_lap} (with possibly different $\alp$'s), then it is
possible to use the extra symmetry and homogeneity properties to
obtain better estimates, see \cite{AlCiJa10}.
\item The optimal choice of the $r,\hat r$ in Remark \ref{rem:splitting} depends on the
  behavior of the L\'evy measures at zero and infinity, see the
  discussion above and at the end of this section for more details.
\end{enumerate}
\end{remark}

Let us now consider the nonlocal vanishing viscosity problem
\begin{equation}\label{nat:bb2}
\begin{cases}
\partial_t u^\epsilon+\Div f(u^\epsilon)=\epsilon \, \Levy^\mu[A(u^\epsilon)],\\
u^\epsilon(0)=u_0,
\end{cases}
\end{equation}
i.e.~problem \eqref{nat:scl} with a perturbation term $\epsilon \,
\Levy^\mu[A(u^\epsilon)]$. When $\epsilon>0$ tend to zero,
$u^\epsilon$ is expected to converge toward the solution $u$ of
\eqref{nat:scl}. As an immediate application of Theorem
\ref{th:nonlin} or \ref{nat:th-levy}, we have the following  result:
\begin{theorem}[Vanishing viscosity]\label{nat:cor-rate}
Assume~\eqref{flux}--\eqref{levy-measure}. Let~$u$ and~$u^\epsilon$
be the  entropy solutions of~\eqref{nat:scl} and~\eqref{nat:bb2}
respectively. Then for every~$T,\epsilon>0$ and all~$\hat r>r>0$,
\begin{equation}\label{nat:rate}
\begin{split}
&\|u-u^\epsilon\|_{C([0,T];L^1)}\leq C \min_{\hat r>r>0} \left\{d^{\frac12}  T^{\frac12} \epsilon^{\frac12} \sqrt{\int_{0<|z| \leq r} |z|^2 \, \dif \mu(z)} \right. \\
&\quad+\left.T\epsilon \,  \bigg[\int_{r<|z| \leq \hat r} |z| \, \dif
    \mu(z)
+  \Big|\int_{r \wedge 1 < |z| \leq r \vee 1} z \, \dif
\mu(z)\Big| + \int_{|z| > \hat r} \, \dif \mu(z)\bigg]\right\},
\end{split}
\end{equation}
where $C$ only depends on $\|u_0\|_{L^1(\R^d)
  \cap BV(\R^d)}$ and $\|A'\|_{L^\infty(\R)}$.
\end{theorem}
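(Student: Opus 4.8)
The plan is to view both the pure conservation law \eqref{nat:scl} and the perturbed equation \eqref{nat:bb2} as special instances of \eqref{1}, and then to apply Theorem~\ref{th:nonlin} (equivalently Theorem~\ref{nat:th-levy}) directly. On the one hand, \eqref{nat:scl} is \eqref{1} with data $(f,0,u_0,\mu)$: when $A\equiv 0$ we have $b_r^{\mu^\ast}|A(u)-A(k)|\equiv 0$, $|A(u)-A(k)|\,\Levy^{\mu^\ast}_r[\phi]\equiv 0$ and $\sgn(u-k)\,\Levy^{\mu,r}[A(u)]\equiv 0$, so all the $\Levy$-terms drop out of \eqref{entropy_ineq} and Definition~\ref{L1-entropy} reduces to the classical Kruzhkov entropy formulation; hence by Theorem~\ref{th:well-posedness} the entropy solution $u$ of \eqref{nat:scl} coincides with the entropy solution of \eqref{1} with $A\equiv 0$. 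On the other hand, by linearity of $\Levy^\mu$ we have $\epsilon\,\Levy^\mu[A(u^\epsilon)]=\Levy^\mu[\epsilon A(u^\epsilon)]$, so $u^\epsilon$ is the entropy solution of \eqref{1} with data $(f,\epsilon A,u_0,\mu)$, and $\epsilon A$ still satisfies \eqref{Aflux} since $\epsilon>0$.

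I would then apply Theorem~\ref{th:nonlin} with $u^\epsilon$ playing the role of the first solution (data $(f,\epsilon A,u_0,\mu)$) and $u$ that of the second (data $(g,B,v_0,\nu)=(f,0,u_0,\mu)$), the common L\'evy measure being $\nu=\mu$. Since $\|f'-g'\|_{L^\infty}=0$ and $\|u_0-v_0\|_{L^1}=0$, while $\|(\epsilon A)'-B'\|_{L^\infty}=\epsilon\,\|A'\|_{L^\infty(\R)}$ (recall $B=0$), only the three ``diffusion'' contributions of \eqref{nat:first-app} survive: a square-root term $|u_0|_{BV}\sqrt{c_d\,\epsilon\,\|A'\|_{L^\infty}\,T\int_{0<|z|\le r}|z|^2\dif\mu(z)}$, a drift term $|u_0|_{BV}\,T\epsilon\,\|A'\|_{L^\infty}\big|\int_{r\wedge 1<|z|\le r\vee 1}z\,\dif\mu(z)\big|$, and a tail term $T\epsilon\,\|A'\|_{L^\infty}\int_{|z|>r}\|u_0(\cdot+z)-u_0\|_{L^1}\dif\mu(z)$. (One may equally argue via Theorem~\ref{nat:th-levy} with $B=A$ and $\nu=\epsilon\mu$, using $\epsilon\,\Levy^\mu=\Levy^{\epsilon\mu}$ and $|\epsilon\mu|=\epsilon\mu$; the outcome is identical.)

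Next I would estimate the tail term by Remark~\ref{rem:splitting}: for any $\hat r>r$ one has $\int_{|z|>r}\|u_0(\cdot+z)-u_0\|_{L^1}\dif\mu(z)\le |u_0|_{BV}\int_{r<|z|\le\hat r}|z|\dif\mu(z)+2\|u_0\|_{L^1}\int_{|z|>\hat r}\dif\mu(z)$. Plugging this in and using $\sqrt{c_d}=2d/\sqrt{d+1}\le 2\sqrt d$ gives exactly the four terms in the braces of \eqref{nat:rate}, with $C:=\max\big\{2|u_0|_{BV}\sqrt{\|A'\|_{L^\infty}},\,|u_0|_{BV}\|A'\|_{L^\infty},\,2\|u_0\|_{L^1}\|A'\|_{L^\infty}\big\}$, which depends only on $\|u_0\|_{L^1(\R^d)\cap BV(\R^d)}$ and $\|A'\|_{L^\infty(\R)}$. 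As the inequality holds for every pair $\hat r>r>0$, taking the infimum over such pairs yields \eqref{nat:rate}.

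I do not anticipate a genuine obstacle here, since all the analytic work is contained in Lemma~\ref{lem:kuznetsov} and Theorems~\ref{th:nonlin}--\ref{nat:th-levy}. The points calling for some care are: (i) verifying that \eqref{nat:scl} and \eqref{nat:bb2} genuinely fit the framework of \eqref{1} with admissible data --- the $A\equiv 0$ collapse of the entropy inequality, and the fact that rescaling $A$ to $\epsilon A$ preserves \eqref{Aflux}; (ii) the bookkeeping of constants, so that the dimensional factor $d^{1/2}$ is displayed explicitly while $C$ keeps only the advertised dependence; and (iii) optimizing over the splitting radii $r,\hat r$ from Remark~\ref{rem:splitting}, which is precisely what makes the bound \eqref{nat:rate} sharp for concrete L\'evy measures, e.g.\ those of fractional-Laplacian type \eqref{frac_lap}.
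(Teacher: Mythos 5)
Your proposal is correct and follows essentially the same route as the paper's own (outline of) proof: view $u$ as the entropy solution of \eqref{1} with $A=0$, apply Theorem~\ref{th:nonlin} so that the only surviving discrepancy is $\|(\epsilon A)'-0\|_{L^\infty}=\epsilon\|A'\|_{L^\infty}$, and split the tail term via Remark~\ref{rem:splitting} before optimizing over $\hat r>r>0$. Your version merely supplies the bookkeeping (the $A\equiv0$ collapse of the entropy inequality and the bound $\sqrt{c_d}\le 2\sqrt d$) that the paper leaves implicit.
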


\begin{proof}[Outline of proof]
Note that $u$ can be seen as the entropy solution of \eqref{1} with
$A=0$ and $\mu$ as L\'evy measure. Hence we can estimate
$\|u-u^\epsilon\|_{C([0,T];L^1)}$ from Theorem~\ref{th:nonlin}. The
error coming from the difference of the derivatives of the
nonlinearities is equal to $\epsilon \, \|A'\|_{L^\infty(\R)}$.
Inequality \eqref{nat:rate} then follows from \eqref{nat:first-app}
and Remark \ref{rem:splitting}.
\end{proof}
\begin{corollary}
\label{cor-rate} Assume~\eqref{flux}--\eqref{levy-measure}. Let~$u$
and $u^\epsilon$ be the entropy solutions of \eqref{nat:scl}
and \eqref{nat:bb2} respectively. Then for all~$T,\epsilon>0$
\begin{equation*} 
\|u-u^\epsilon\|_{C([0,T];L^1)}\leq C\, (T^{\frac12}\vee
T)\,\epsilon^{\frac12},
\end{equation*}
where $C$ only depends on  $ d$ and the data.
Moreover, if in addition
\begin{equation}\label{add-referee}
\int_{\R^d \setminus \{0\}}|z|\wedge 1\,\dif
 \mu(z)<+\infty,
\end{equation} 
then we have the better estimate
\begin{equation*} 
\|u-u^\epsilon\|_{C([0,T];L^1)}\leq CT\epsilon,
\end{equation*}
where $C$ depends on the data.
\end{corollary}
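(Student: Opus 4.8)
The plan is to obtain both inequalities from Corollary~\ref{cor_main} after reinterpreting $u$ and $u^\epsilon$ as entropy solutions of~\eqref{1} with suitable data. Two observations make this work. First, since $\Levy^\mu$ is linear in $\mu$, one has $\epsilon\,\Levy^\mu=\Levy^{\epsilon\mu}$, so the solution $u^\epsilon$ of~\eqref{nat:bb2} is precisely the entropy solution of~\eqref{1} with data $(f,A,u_0,\epsilon\mu)$. Second, when the L\'evy measure is the zero measure the entropy inequality~\eqref{entropy_ineq} reduces to the classical Kruzhkov inequality for~\eqref{nat:scl}; hence, by the uniqueness part of Theorem~\ref{th:well-posedness}, the entropy solution $u$ of~\eqref{nat:scl} is also the entropy solution of~\eqref{1} with data $(f,A,u_0,0)$, where $A$ is the nonlinearity appearing in~\eqref{nat:bb2} (which satisfies~\eqref{Aflux}). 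The point to get right---and essentially the only subtlety---is that the factor $\epsilon$ must be attached to the \emph{L\'evy measure}, not to the nonlinearity: writing instead $\epsilon\,\Levy^\mu[A(u^\epsilon)]=\Levy^\mu[\epsilon A(u^\epsilon)]$ would keep the two measures equal but force $B=\epsilon A$, and then the $\int_{\R^d\setminus\{0\}}|z|^2\wedge1\,\dif|\mu-\nu|$-type term in~\eqref{main1} would degenerate into the $\epsilon$-independent quantity $\int_{\R^d\setminus\{0\}}|z|^2\wedge1\,\dif\mu$.

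With the data sets identified as $(f,A,u_0,0)$ for $u$ and $(f,A,u_0,\epsilon\mu)$ for $u^\epsilon$, apply Corollary~\ref{cor_main}. Since $f=g$, $u_0=v_0$ and $A=B$, the initial-data term, the flux term and the $\|A'-B'\|$-term in~\eqref{main1} all vanish, while the two L\'evy measures are $0$ and $\epsilon\mu$, so the remaining term becomes $C\,(T^{\frac12}\vee T)\,\sqrt{\epsilon\int_{\R^d\setminus\{0\}}|z|^2\wedge1\,\dif\mu(z)}$. By~\eqref{levy-measure} the integral is a finite constant depending only on the data, and absorbing it into $C$ yields the first estimate. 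If in addition~\eqref{add-referee} holds, the extra hypothesis of the second part of Corollary~\ref{cor_main} (finiteness of $\int|z|\wedge1$ against each of the two measures) reduces to $\epsilon\int_{\R^d\setminus\{0\}}|z|\wedge1\,\dif\mu<+\infty$, so that~\eqref{main2}, with the same cancellations, gives $\|u-u^\epsilon\|_{C([0,T];L^1)}\le C\,T\,\epsilon\int_{\R^d\setminus\{0\}}|z|\wedge1\,\dif\mu(z)$; absorbing the finite integral into $C$ gives the improved estimate.

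As an alternative one can argue straight from Theorem~\ref{nat:cor-rate}. Choosing $r=1$ in~\eqref{nat:rate}, the drift integral over $1<|z|\le1$ is empty, and letting $\hat r\downarrow1$ removes the integral over $1<|z|\le\hat r$, leaving the bound $\|u-u^\epsilon\|_{C([0,T];L^1)}\le C\,d^{\frac12}T^{\frac12}\epsilon^{\frac12}\sqrt{\int_{0<|z|\le1}|z|^2\,\dif\mu}+C\,T\epsilon\int_{|z|>1}\dif\mu$. Combining the second summand with the a priori estimate $\|u-u^\epsilon\|_{C([0,T];L^1)}\le 2\|u_0\|_{L^1(\R^d)}$ (from~\eqref{nat:nonincrease} applied to both solutions) and the elementary inequality $\min(a,b)\le\sqrt{ab}$ turns $T\epsilon$ into $\sqrt{T\epsilon}=T^{\frac12}\epsilon^{\frac12}\le(T^{\frac12}\vee T)\epsilon^{\frac12}$, which gives the first estimate; the improved one follows from~\eqref{nat:rate} by letting $r\downarrow0$ with $\hat r=1$, all the relevant integrals being finite thanks to~\eqref{add-referee} and~\eqref{levy-measure}. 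In either approach the work is purely bookkeeping: there is no new analytical difficulty beyond choosing the representation that scales the L\'evy measure.
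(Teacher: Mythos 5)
Your proof is correct and follows the route the paper itself indicates (the paper dispatches Corollary~\ref{cor-rate} in one line as an immediate consequence of Theorem~\ref{nat:cor-rate} or Corollary~\ref{cor_main}); both of your two arguments are valid instantiations of that, and the bookkeeping in each is right.

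One side remark you make is, however, false, and since you single it out as ``the point to get right,'' it is worth correcting. You claim that attaching $\epsilon$ to the nonlinearity rather than to the measure --- i.e.\ viewing $u$ and $u^\epsilon$ as solutions of \eqref{1} with data $(f,0,u_0,\mu)$ and $(f,\epsilon A,u_0,\mu)$ --- would ruin the estimate because the $\int|z|^2\wedge 1\,\dif|\mu-\nu|$ term would become $\epsilon$-independent. In that bookkeeping the two L\'evy measures are \emph{equal}, so $|\mu-\nu|=0$ and that term vanishes altogether; the factor $\epsilon$ instead appears in $\|A'-B'\|_{L^\infty}=\epsilon\|A'\|_{L^\infty}$, and \eqref{main1} (or \eqref{nat:first-app} directly) still yields $C\,(T^{\frac12}\vee T)\,\sqrt{\epsilon\,\|A'\|_{L^\infty}}=\mathcal O(\epsilon^{\frac12})$. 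Indeed this is exactly the scaling used in the paper's own proof of Theorem~\ref{nat:cor-rate}, which invokes Theorem~\ref{th:nonlin} with $\|A'-B'\|_{L^\infty}=\epsilon\|A'\|_{L^\infty}$; the square root acting on that factor is what produces the rate $\epsilon^{\frac12}$ in \eqref{nat:rate}. So both normalizations work, and there is no subtlety here --- only the (correct) observation that the pure conservation law \eqref{nat:scl} can be realized within \eqref{1} either with zero nonlinearity or with zero measure.
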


This corollary follows immediately from Theorem \ref{nat:cor-rate}
or Corollary \ref{cor_main}.

\begin{remark}\
\begin{enumerate}
\item Our estimates are just as good or better than the standard $\mathcal
  O(\epsilon
  ^{\frac12})$ estimate for
  the classical vanishing viscosity method (\eqref{nat:local-eq} with
  $A(u)=\epsilon
  \, u$).
\item Our estimates hold for arbitrary L{\'e}vy measures $\mu$ and even
  for strongly degenerate diffusions where $A$ may vanish on a large
  set! This is consistent with general results for nonlocal
  Hamilton-Jacobi-Bellman equations \cite{Jakobsen/Karlsen}.
\item As for the classical (local) vanishing viscosity method,
  better rates could be obtained if the solutions are more
  regular. E.g. if~$A(u^\epsilon)$ is uniformly (in $\epsilon$) bounded in
  $W^{2,1}$, then the error estimate should be $O(\epsilon)$ even 
without assumption~\eqref{add-referee}. Such a result can
not be derived from~\eqref{nat:rate}, but should be proved directly.
\item Corollary \ref{cor-rate} contains
less information than Theorem
  \ref{nat:cor-rate}; indeed, if $\mu$ is as in \eqref{frac_lap}, the 
additional symmetry and homogeneity can be used to obtain better
estimates which can be proved to be optimal. See Example
\ref{rate-alp} below. 
\item The error estimates above trivially
  also holds for  the more general vanishing viscosity equation
\begin{equation*}
\begin{cases}
\partial_t u^\epsilon+\Div f(u^\epsilon)=\Levy^\nu[B(u^\epsilon)]+\epsilon \, \Levy^\mu[A(u^\epsilon)],\\
u^\epsilon(0)=u_0.
\end{cases}
\end{equation*}
\end{enumerate}
\end{remark}

\subsection*{Further discussion}\label{nat:sec-discussion}

We now make a more precise comparison of the results above with
known estimates from the literature. We begin with continuous
dependence estimates and finish with convergence rates for vanishing
viscosity approximations.

Let $u$ and~$v$ denote the entropy solutions of~\eqref{1}
and~\eqref{bb1}, respectively. To simplify, we take the same data
sets~$(f,A,u_0)=(g,B,v_0)$ and we only allow the L\'evy
measures~$\mu$ and~$\nu$ to be different. We also let~$C$ denote a
constant only depending on~$T,d $ and the data.

\begin{example}
Let us consider Equation~\eqref{kensul} with~$a=0$. Let
us also consider the class of L\'evy operators satisfying
\begin{equation*}
\begin{cases}
\int_{\R^d \setminus \{0\}} |z|^2 \wedge |z| \, \dif \mu(z)<+\infty,\\
\mu=\mu^\ast.
\end{cases}
\end{equation*}
For such kind of equations, the following continuous dependence
estimate on the L\'evy measure has been established
in~\cite{KaUl10}:
\begin{equation*}
\begin{split}
\|u-v\|_{C\left([0,T];L^{1}\right)} \leq C \sqrt{\int_{0<|z| \leq 1} |z|^2
\, \dif |\mu-\nu|(z)}+C\int_{|z|>1} |z| \, \dif |\mu-\nu|(z).
\end{split}
\end{equation*}
This estimate follows from Theorem \ref{nat:th-levy} and Remark \ref{rem:splitting} by taking
$r=1$ and $\hat r=+\infty$ in~\eqref{nat:second-app}.
\end{example}

\begin{example}\label{nat:ex-HJ}
Consider the following one-dimensional Hamilton-Jacobi
equation
\begin{equation*}
U_t+f(U_x)=\Levy^\mu[U]
\end{equation*}
with initial data $U_0(x):=\int_{-\infty}^x u_0(y) \, \dif y$. This
particular equation is related to the nonlocal conservation law
\eqref{nat:scl}, since its solution is $U(x,t)=\int_{-\infty}^x u(y,t) \,
\dif y$ where $u$ solves \eqref{nat:scl}, see
\cite{Cifani/Jakobsen}.
It is also an example of an integro-PDE for which the general theory
of \cite{Jakobsen/Karlsen} applies, and this theory allows us to
establish the following continuous dependence estimate on the L\'evy
measure:
$$
\sup_{\R \times [0,T]} |U-V| \leq C\sqrt{\int_{\R \setminus \{0\}}
|z|^2\wedge 1 \, \dif |\mu-\nu|(z)},
$$
where~$V(x,t):=\int_{-\infty}^x v(y,t) \, \dif y$. (This result is a
version of Theorem 4.1 in \cite{Jakobsen/Karlsen} which follows
from Theorem 3.1  by setting $p_0,\dots,p_4,p_s=0$ and
$\rho=|z|\wedge1$ in (A0)). Since
$$\sup_{\R \times [0,T]} |U-V| \leq \|u-v\|_{C([0,T];L^1)},$$
this estimate also follows from \eqref{main1} in Corollary
\ref{cor_main} when $(A,f,u_0)=(B,g,v_0)$.
\end{example}

Let us now compare Theorem ~\ref{nat:cor-rate} with known
convergence rates. We keep the same notation for~$u$ and~$u^\epsilon$
as in Theorem ~\ref{nat:cor-rate}.

\begin{example}
\label{rate-alp} Let us consider the case
where~$A(u^\epsilon)=u^\epsilon$
and~$\Levy^\mu=-(-\triangle)^{\frac{\alpha}{2}}$, $\alpha \in
(0,2)$. Then the following optimal rates have been derived in
\cite{Alibaud,Droniou}:
\begin{equation}\label{nat:rate-fl}
\|u-u^\epsilon\|_{C([0,T];L^1)}=
\begin{cases}
\mathcal{O} \left(\epsilon^{\frac{1}{\alpha}} \right) & \mbox{ if~$\alpha>1$},\\
\mathcal{O} \left(\epsilon \, |\ln \epsilon| \right) & \mbox{ if~$\alpha=1$},\\
\mathcal{O} \left(\epsilon \right) & \mbox{ if~$\alpha<1$}.\\
\end{cases}
\end{equation}
Let us explain how these results can be deduced from
\eqref{nat:rate}. First we use \eqref{frac_lap} to explicitly
compute the integrals in \eqref{nat:rate} and obtain
$$
\|u-u^\epsilon\|_{C([0,T];L^1)}= \mathcal{O} \left( \min_{\hat r>r>0}
\left\{ \sqrt{\epsilon \, \frac{r^{2-\alpha}}{2-\alpha}} +
\epsilon \int_{r}^{\hat r} \frac{\dif \tau}{\tau^\alpha} + \epsilon
\, \hat r^{-\alpha} \right\} \right).
$$
We then deduce \eqref{nat:rate-fl} by taking
$r=\epsilon^{\frac{1}{\alpha}}$ and~$\hat r=+\infty$
if~$\alpha>1$,~$r=\epsilon$ and~$\hat r=1$ if~$\alpha=1$, and~$r=0$
and~$\hat r=1$ if~$\alpha<1$.
\end{example}


\begin{example} Let us finally consider the vanishing approximation~\eqref{nat:bb2} with the viscous term 
$$
\partial_t u^\epsilon+\Div f(u^\epsilon)=\frac{1}{\epsilon} \left(g_\epsilon \ast u^\epsilon-u^\epsilon \right),
$$
where~$g_\epsilon(z):=\frac{1}{\epsilon^d} \, g \left( \frac{z}{\epsilon} \right)$ with an even and nonnegative kernel $g \in L^1(\R^d)$ such that 
$$
\int_{\R^d} |z|^2 \,  g(z) \, \dif z<+\infty.
$$ 
This is the Rosenau's regularization of the Chapman-Enskog expansion for hydrodynamics~\cite{Ros89}; see also Equations (1.1) and (2.3) of \cite{ScTa92}. Its convergence toward~\eqref{nat:scl} has been established in~\cite{LaMa03,ScTa92}. In Corollary 5.2 of \cite{ScTa92} the following rate of convergence has been derived:
$$
\|u-u^\epsilon\|_{C([0,T];L^1)}=\mathcal{O} \left(\epsilon^\frac{1}{2}\right).
$$
This result can be recovered from Theorems \ref{th:nonlin} or \ref{nat:th-levy}. Indeed, we can choose e.g. $A(u^\epsilon)=u^\epsilon$, $\dif \mu(z)=\frac{g_\epsilon(z)}{\epsilon} \, \dif z$ and $\nu=0$, to get the desired equations. Next, we apply \eqref{nat:second-app} with~$r=+\infty$ and rescale the~$z$-variable to show that the error term is bounded above by
\begin{equation*}
\begin{split}
C \,\sqrt{\int_{\R^d \setminus \{0\}} |z|^2 \,\dif |\mu-\nu|(z)} 
 & =C \, \sqrt{\int_{\R^d} |z|^2 \, \frac{g\left(\frac{z}{\epsilon}\right)}{\epsilon^{d+1}} \, \dif z} \\
&  = C \,  \epsilon^\frac{1}{2} \, \sqrt{\int_{\R^d} |z|^2 \, g(z) \, \dif z.}
\end{split}
\end{equation*}
\end{example}

\section{Auxiliary results}\label{nat:sec-pro-nl}

Before proving our main results in the next section, we state
several technical lemmas.

\begin{lemma}\label{nat:def-singular}
Assume~\eqref{levy-measure} and $r >0$. Then for all $\phi \in
C^\infty_c(\R^d)$,
$$
\|\Levy^\mu_r[\phi]\|_{L^1(\R^d)}  \leq  \int_{0<|z|\leq r} |z|^2 \,
\dif \mu(z) \, \|\phi\|_{W^{2,1}(\R^d)}.$$
\end{lemma}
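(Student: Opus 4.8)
The plan is to estimate $\Levy^\mu_r[\phi]$ pointwise by a Taylor expansion with integral remainder, and then integrate the resulting bound in $x$ using Fubini's theorem. First I would recall the definition
$$\Levy^\mu_r[\phi](x)=\int_{0<|z|\le r}\bigl(\phi(x+z)-\phi(x)-z\cdot D\phi(x)\,\mathbf 1_{|z|\le 1}\bigr)\,\dif\mu(z).$$
Since $r$ may be larger than $1$, I would split the region $\{0<|z|\le r\}$ into $\{0<|z|\le r\wedge 1\}$ and (when $r>1$) $\{1<|z|\le r\}$; on the first piece the correction term $z\cdot D\phi(x)$ is present, on the second it is absent. (Alternatively, and perhaps more cleanly, note that the bound only has to be proved as stated and one may use that $\int_{0<|z|\le r}|z|^2\,\dif\mu(z)$ already controls both $\int_{0<|z|\le 1}|z|^2\,\dif\mu(z)$ and $\int_{1<|z|\le r}\dif\mu(z)$ after multiplying by suitable powers of $|z|$.)

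The core estimate is the second-order Taylor formula: for $|z|\le 1$,
$$\Bigl|\phi(x+z)-\phi(x)-z\cdot D\phi(x)\Bigr|=\left|\int_0^1(1-s)\,z^{T}D^2\phi(x+sz)\,z\,\dif s\right|\le \frac{|z|^2}{2}\int_0^1|D^2\phi(x+sz)|\,\dif s,$$
while for $1<|z|\le r$ one simply uses a first-order expansion, $|\phi(x+z)-\phi(x)|\le|z|\int_0^1|D\phi(x+sz)|\,\dif s\le|z|^2\int_0^1|D\phi(x+sz)|\,\dif s$. Plugging these into the definition and using Tonelli to exchange the $x$-integral with the $\dif\mu(z)\,\dif s$ integration, together with translation invariance of Lebesgue measure ($\int_{\R^d}|D^2\phi(x+sz)|\,\dif x=\|D^2\phi\|_{L^1}$ and similarly for $D\phi$), yields
$$\|\Levy^\mu_r[\phi]\|_{L^1(\R^d)}\le \int_{0<|z|\le 1}\frac{|z|^2}{2}\,\dif\mu(z)\,\|D^2\phi\|_{L^1(\R^d)}+\int_{1<|z|\le r}|z|^2\,\dif\mu(z)\,\|D\phi\|_{L^1(\R^d)},$$
which is bounded by $\int_{0<|z|\le r}|z|^2\,\dif\mu(z)\,\|\phi\|_{W^{2,1}(\R^d)}$ since both $\|D^2\phi\|_{L^1}$ and $\|D\phi\|_{L^1}$ are dominated by $\|\phi\|_{W^{2,1}}$ and the $\frac12$ only helps.

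The only mild obstacle is the measurability/integrability bookkeeping when exchanging the order of integration: one must check that $(x,z,s)\mapsto D^2\phi(x+sz)$ is jointly measurable (it is, being continuous) and that the majorant $|z|^2\mathbf 1_{0<|z|\le r}$ is $\mu$-integrable (this is exactly assumption \eqref{levy-measure} restricted to the bounded region, hence finite), so Tonelli applies and everything is legitimate. Apart from that the argument is a routine Taylor-expansion-plus-Fubini computation.
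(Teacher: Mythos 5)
Your proof is correct and follows exactly the route the paper indicates (its "proof" is the one-line remark that the bound "easily follows from a Taylor expansion and Fubini's theorem"). The careful splitting at $|z|=1$ to handle the disappearance of the gradient correction when $r>1$, together with the bound $|z|\le|z|^2$ on that region, is precisely the bookkeeping needed to get the stated constant $\int_{0<|z|\le r}|z|^2\,\dif\mu(z)$ in front of the full $W^{2,1}$-norm.
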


The proof easily follows from a Taylor expansion and Fubini's theorem. 
\begin{remark}\label{rem:splitting-b}
Similarly~$\Levy^{\mu,r}$ is (linear and) bounded from~$L^1$ into itself, thanks to Remark~\ref{rem:splitting} with $\hat r=r$. 
\end{remark}
In the next result, we establish a Kato type inequality for
$\Levy^{\mu,r}[A(u)] $.
\begin{lemma}\label{nat:ipp}
Assume~\eqref{Aflux} and \eqref{levy-measure}. Then for all~$u \in
L^1(\R^d)$,~$k \in \R$,~$r>0$, and all $0\leq\phi \in C^\infty_c(\R^d)$,
\begin{equation*}
\int_{\R^d} \sgn (u-k) \, \Levy^{\mu,r}[A(u)] \, \phi \, \dif x \leq
\int_{\R^d} |A(u)-A(k)| \, \Levy^{\mu^\ast,r}[\phi] \, \dif x.
\end{equation*}
\end{lemma}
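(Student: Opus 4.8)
\textbf{Proof plan for Lemma~\ref{nat:ipp}.} The statement is a Kato-type inequality for the bounded nonlocal operator $\Levy^{\mu,r}$, defined in \eqref{nat:def-decomposition-outer} as $\Levy^{\mu,r}[\psi](x)=\int_{|z|>r}\psi(x+z)-\psi(x)\,\dif\mu(z)$. My plan is to expand the left-hand side pointwise, use the pointwise inequality $\sgn(u(x)-k)\,(A(u(x+z))-A(u(x)))\le |A(u(x+z))-A(k)|-|A(u(x))-A(k)|$, and then integrate in $x$ against $\phi\ge 0$, finally moving the shift onto $\phi$ by the change of variables $x\mapsto x-z$ together with the definition \eqref{nat:measure-dual} of $\mu^\ast$.

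In detail: for fixed $x$ and $z$, write $A(u(x+z))-A(u(x))=\big(A(u(x+z))-A(k)\big)-\big(A(u(x))-A(k)\big)$. Multiplying by $\sgn(u(x)-k)$, the second term contributes $-\sgn(u(x)-k)(A(u(x))-A(k))=-|A(u(x))-A(k)|$ by \eqref{nat:key}, while the first term is bounded by $|A(u(x+z))-A(k)|$ since $|\sgn(\cdot)|\le 1$. Integrating this pointwise bound over $|z|>r$ against $\dif\mu(z)$ (legitimate because $A(u)\in L^1$ and $\mu$ restricted to $|z|>r$ is finite by \eqref{levy-measure}, cf.\ Remark~\ref{rem:splitting-b}) gives
\begin{equation*}
\sgn(u(x)-k)\,\Levy^{\mu,r}[A(u)](x)\le \int_{|z|>r}\Big(|A(u(x+z))-A(k)|-|A(u(x))-A(k)|\Big)\,\dif\mu(z).
\end{equation*}
Now multiply by $\phi(x)\ge 0$ and integrate in $x$ over $\R^d$. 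In the first term I apply Fubini (justified since $|A(u(\cdot+z))-A(k)|\phi$ is in $L^1(\dif x\,\dif\mu)$ on $\{|z|>r\}$, using that $|A(u)-A(k)|\le |A(u)|+|A(k)|\mathbf 1_{\mathrm{supp}\,\phi}$ with the latter integrable) and substitute $x\mapsto x-z$ in the inner integral, so $\phi(x)$ becomes $\phi(x-z)$ and the integrand becomes $|A(u(x))-A(k)|\phi(x-z)$; then replacing $z$ by $-z$ via \eqref{nat:measure-dual} turns $\int_{|z|>r}\phi(x-z)\,\dif\mu(z)$ into $\int_{|z|>r}\phi(x+z)\,\dif\mu^\ast(z)$. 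Combining with the second term, whose $x$-integral is simply $\int_{\R^d}|A(u(x))-A(k)|\phi(x)\,\dif x$, yields exactly $\int_{\R^d}|A(u(x))-A(k)|\,\Levy^{\mu^\ast,r}[\phi](x)\,\dif x$, which is the claim.

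The only genuinely delicate point is the measurability and integrability bookkeeping: the integrand $\sgn(u-k)\Levy^{\mu,r}[A(u)]$ involves a merely Lebesgue-measurable $u$, but as noted in Remark~\ref{modif:rem}\eqref{rem-ref-l} this is handled by a classical approximation argument (or, per Remark~\ref{modif:rem}\eqref{rem-ref}, by working with Borel representatives), so all the Fubini applications are legitimate. Everything else is the elementary pointwise convexity-type inequality above plus a translation-invariance change of variables; no regularity of $u$ beyond $L^1$ is needed because $\Levy^{\mu,r}$ is a bounded operator on $L^1$ (Remark~\ref{rem:splitting-b}).
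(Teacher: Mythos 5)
Your proposal is correct and follows essentially the same route as the paper's proof: the same pointwise inequality obtained by adding and subtracting $A(k)$ and using \eqref{nat:key}, followed by Fubini and the change of variables that transfers the shift onto $\phi$ and converts $\mu$ into $\mu^\ast$ (the paper performs this in one step as $(z,x)\to(-z,x+z)$, you do it in two). The integrability and measurability remarks match the paper's own justification, so nothing is missing.
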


\begin{proof}
Note first that~$A(u)$ is~$L^1$ by~\eqref{Aflux}, and hence $
\Levy^{\mu,r}[A(u)]$ is well-defined in~$L^1$ by
Remark~\ref{rem:splitting-b}. Easy computations then reveal that
\begin{align*}
& \int_{\R^d} \sgn (u-k) \, \Levy^{\mu,r}[A(u)] \, \phi \, \dif x,\\
& =  \int_{\R^d} \int_{|z|>r}  \sgn (u(x)-k) \, \Big(A(u(x+z))-A(u(x))\Big) \, \phi(x)  \, \dif \mu(z) \, \dif x,\\
& =  \int_{\R^d} \int_{|z|>r} \sgn (u(x)-k)\\
& \qquad\qquad\qquad\qquad \, \Big\{(A(u(x+z))-A(k))-(A(u(x))-A(k))\Big\} \, \phi(x)  \, \dif \mu(z) \, \dif x,\\
& \leq  \int_{\R^d} \int_{|z|>r} \Big(|A(u(x+z))-A(k)|-|A(u(x))-A(k)|\Big) \, \phi(x)  \, \dif \mu(z) \, \dif x \quad \mbox{by~\eqref{nat:key}},\\
& =  \underbrace{\int_{\R^d} \int_{|z|>r}  |A(u(x+z))-A(k)| \, \phi(x)  \, \dif \mu(z) \, \dif x}_{=:I}\\
& \qquad\qquad\qquad - \underbrace{\int_{\R^d} \int_{|z|>r}
|A(u(x))-A(k)| \, \phi(x)  \, \dif \mu(z) \, \dif x}_{=:J}.
\end{align*}
Note that all these integrals are well-defined, thanks
to~\eqref{levy-measure} (\footnote{The measurability is immediate if
  the reader only consider Borel measurable representatives of~$u$ as
  suggested in Remark~\ref{modif:rem}~\eqref{rem-ref}.}).

By the respective changes of variable~$(z,x) \rightarrow (-z,x+z)$
and~$(z,x) \rightarrow (-z,x)$, we find that
\begin{align*}
& I
=  \int_{\R^d} \int_{|z|>r} \phi(x+z) \, |A(u(x))-A(k)|  \, \dif
\mu^\ast(z) \, \dif x,
\\
& J= \int_{\R^d} \int_{|z|>r} \phi(x) \, |A(u(x))-A(k)|  \, \dif
\mu^\ast(z) \, \dif x.
\end{align*}
Here the measure~$\mu^\ast$ in \eqref{nat:measure-dual} appears because
of the relabelling of~$z$. This measure has the same properties as
$\mu$. Hence we can conclude that
$$\int_{\R^d} \sgn (u-k) \, \Levy^{\mu,r}[A(u)] \, \phi \, \dif x \leq I-J=\int_{\R^d} |A(u)-A(k)| \, \Levy^{\mu^\ast,r}[\phi] \, \dif x,
$$
and the proposition follows.
\end{proof}

The next lemma is a consequence of the Kato inequality, and it plays
a key role in the doubling of variables arguments throughout this
paper and  in the uniqueness proof of
\cite{Alibaud,Cifani/Jakobsen}.   
\begin{lemma}\label{nat:pro-kato-d}
Assume~\eqref{Aflux} and \eqref{levy-measure}, and let $ u,v \in
L^\infty(Q_T) \cap C([0,T];L^1)$, $0\leq\psi\in L^1(\R^{d}\times
(0,T)^2) $, and~$r>0$. Then
\begin{align*}
\iint_{Q_{T}^2} &\sgn
(u(y,s)-v(x,t))\\
&\cdot\Big(\Levy^{\mu,r}[A(u(\cdot,s))](y)-\Levy^{\mu,r}[A(v(\cdot,t))](x)\Big)
\,\psi(x-y,t,s) \, \dif w\leq 0
\end{align*}
(where~$\dif w=\dif x \, \dif t \, \dif y \, \dif s$).
\end{lemma}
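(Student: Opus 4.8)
The plan is to apply the Kato inequality of Lemma~\ref{nat:ipp} twice, once in the $y$-variable for fixed $(x,t,s)$ and once in the $x$-variable for fixed $(y,t,s)$, and then combine and integrate. Fix $t,s$ and an a.e.~value of one solution. For fixed $(x,t,s)$, apply Lemma~\ref{nat:ipp} with $u$ replaced by $u(\cdot,s)$, with $k:=v(x,t)$, and with test function $\phi(\cdot):=\psi(x-\cdot,t,s)$ (which is a nonnegative $L^1$ function in the $y$-variable; one first reduces to $C_c^\infty$ by a standard density argument, using that $\Levy^{\mu,r}$ and $\Levy^{\mu^\ast,r}$ are bounded on $L^1$ by Remark~\ref{rem:splitting-b}). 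This gives, after integrating in $(x,t,s)$,
\[
\iint_{Q_T^2}\sgn(u(y,s)-v(x,t))\,\Levy^{\mu,r}[A(u(\cdot,s))](y)\,\psi(x-y,t,s)\,\dif w
\le \iint_{Q_T^2}|A(u(y,s))-A(v(x,t))|\,\Levy^{\mu^\ast,r}[\psi(x-\cdot,t,s)](y)\,\dif w.
\]
Symmetrically, for fixed $(y,t,s)$, apply Lemma~\ref{nat:ipp} with $u$ replaced by $v(\cdot,t)$, with $k:=u(y,s)$, and with test function $\phi(\cdot):=\psi(\cdot-y,t,s)$, and use $\sgn(u(y,s)-v(x,t))=-\sgn(v(x,t)-u(y,s))$; integrating in $(y,t,s)$ yields
\[
-\iint_{Q_T^2}\sgn(u(y,s)-v(x,t))\,\Levy^{\mu,r}[A(v(\cdot,t))](x)\,\psi(x-y,t,s)\,\dif w
\le \iint_{Q_T^2}|A(v(x,t))-A(u(y,s))|\,\Levy^{\mu^\ast,r}[\psi(\cdot-y,t,s)](x)\,\dif w.
\]

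Adding the two inequalities, the left-hand side is exactly the quantity in the statement, so it remains to show that the sum of the two right-hand sides is $\le 0$. Here the key observation is a change of variables: writing $g(x,y):=|A(u(y,s))-A(v(x,t))|$ (for fixed $t,s$), one has $\Levy^{\mu^\ast,r}$ acting in the $y$-variable on $y\mapsto\psi(x-y,t,s)$ and in the $x$-variable on $x\mapsto\psi(x-y,t,s)$; unwinding the definition~\eqref{nat:def-decomposition-outer} and the relabelling $z\mapsto -z$ (which exchanges $\mu^\ast$ and $\mu$) shows that the two terms differ only by the translation structure of $\psi(x-y,\cdot)$. Concretely, in the first term substitute $y\to y+z$ inside the $\Levy^{\mu^\ast,r}$ integral and in the second $x\to x-z$, so that both reduce to integrals of $\int_{|z|>r}\big(g(x+z,y+z)-g(x,y)\big)\dif\mu(z)$ against $\psi(x-y,t,s)$ after a joint translation $(x,y)\to(x+z,y+z)$, under which $x-y$ is invariant and the Lebesgue measure $\dif x\,\dif y$ is invariant; the two contributions then cancel exactly. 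Thus the sum of the right-hand sides vanishes (it is in fact $0$, not merely $\le 0$), which proves the claimed inequality.

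The main technical point — and the step I would be most careful about — is the justification of the Fubini interchanges and changes of variables: all integrands must be shown to be in $L^1$ of the appropriate product measure $\dif\mu(z)\,\dif x\,\dif y\,\dif t\,\dif s$, which follows from $\psi\in L^1$, the $L^\infty$ bounds on $A(u),A(v)$, and $\int_{|z|>r}\dif\mu(z)<\infty$ (a consequence of \eqref{levy-measure}); the measurability of the $\Levy^{\mu,r}$-terms is handled as in Remark~\ref{modif:rem}~\eqref{rem-ref-l} (or by restricting to Borel representatives as in \eqref{rem-ref}). Once integrability is in place, the translation invariance of $\dif x\,\dif y$ and of the diagonal variable $x-y$ makes the cancellation purely mechanical; no further ideas beyond the double Kato inequality are needed.
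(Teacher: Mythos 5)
There is a genuine gap at the cancellation step. Your two separate applications of Lemma~\ref{nat:ipp} are legitimate (the lemma extends to nonnegative $L^1$ test functions, and the integrability bookkeeping is as you describe), but they produce the wrong increments. Writing $g(x,y):=|A(u(y,s))-A(v(x,t))|$ and unwinding $\Levy^{\mu^\ast,r}$, the first right-hand side becomes, after the substitution $y\to y+z$,
\begin{equation*}
R_1=\iint_{Q_T^2}\int_{|z|>r}\big(g(x,y+z)-g(x,y)\big)\,\dif\mu(z)\,\psi(x-y,t,s)\,\dif w,
\end{equation*}
and the second, after $x\to x+z$, becomes $R_2=\iint\int_{|z|>r}\big(g(x+z,y)-g(x,y)\big)\dif\mu(z)\,\psi\,\dif w$. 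Their sum involves $g(x,y+z)+g(x+z,y)-2g(x,y)$, \emph{not} the diagonal increment $g(x+z,y+z)-g(x,y)$ that you claim; you cannot shift both variables simultaneously in either term, because doing so would move $\psi(x-y\pm z,t,s)$ to $\psi(x-y\pm 2z,t,s)$ rather than back to $\psi(x-y,t,s)$. In the $h=x-y$ variable, $R_1+R_2=\iint\int_{|z|>r}\big(\Phi_{t,s}(h+z)+\Phi_{t,s}(h-z)-2\Phi_{t,s}(h)\big)\dif\mu(z)\,\psi(h,t,s)\,\dif h\,\dif t\,\dif s$ with $\Phi_{t,s}(h):=\int|A(u(y,s))-A(v(y+h,t))|\dif y$, a second difference that is not sign-definite. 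Concretely, with $A(w)=w$ and $u(\cdot,s)=v(\cdot,t)=\mathbf{1}_{[0,1]}$ in $d=1$ one gets $\Phi(h)=2\min(|h|,1)$, so for $\psi$ concentrated near $h=0$ (exactly the situation in the doubling argument) the bracket is $\approx 2\Phi(z)>0$ and $R_1+R_2>0$. The chain therefore breaks at ``the two contributions cancel exactly.''

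The missing idea is that the two nonlocal terms must be compared \emph{jointly}, so that the shifts in $x$ and $y$ are coupled through the same $z$. The paper does this at the pointwise level before any integration: using \eqref{nat:key},
\begin{equation*}
\sgn(u(y,s)-v(x,t))\Big\{\big(A(u(y+z,s))-A(v(x+z,t))\big)-\big(A(u(y,s))-A(v(x,t))\big)\Big\}\leq g(x+z,y+z)-g(x,y),
\end{equation*}
then integrates against $\mathbf{1}_{|z|>r}\dif\mu(z)$ and $\psi(x-y,t,s)\dif w$; the joint change of variables $(x,y)\to(x+z,y+z)$ leaves $\psi(x-y,t,s)$ and $\dif x\,\dif y$ invariant and kills the bound. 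In other words, the relevant inequality is a ``doubled'' Kato inequality for the difference of the two operators, not two single Kato inequalities added together. Your decoupled version irretrievably loses this diagonal structure, so the proof cannot be repaired by only fixing the Fubini details.
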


\begin{proof} 
Note that
\begin{align*}
&\sgn (u(y,s)-v(x,t)) \, \Big( A(u(y+z,s))-A(u(y,s)) \Big)\\
&\qquad-\sgn (u(y,s)-v(x,t)) \, \Big( A(v(x+z,t))-A(v(x,t)) \Big)\\
& = \sgn (u(y,s)-v(x,t))\\
&\qquad\cdot\Big\{ \Big(A(u(y+z,s))-A(v(x+z,t))\Big)-\Big(A(u(y,s))-A(v(x,t))\Big) \Big\}\\
& \leq
\left|A(u(y+z,s))-A(v(x+z,t))\right|-\left|A(u(y,s))-A(v(x,t))\right|
\end{align*}
where these functions are both defined. By 
an integration w.r.t.
$\mathbf{1}_{|z|>r} \, \dif \mu(z)$, we find that for all $(t,s) \in
(0,T)^2$ and a.e.~$(x,y) \in \R^{2d}$,
\begin{align*}
& \sgn (u(y,s)-v(x,t)) \, \Big( \Levy^{\mu,r}[A(u(\cdot,s))](y)-\Levy^{\mu,r}[A(v(\cdot,t))](x) \Big) \\
& \leq  \int_{|z|>r}
\left|A(u(y+z,s))-A(v(x+z,t))\right|-\left|A(u(y,s))-A(v(x,t))\right|
\, \dif \mu(z).
\end{align*}
After another integration, this time w.r.t.~$\psi(x-y,t,s) \, \dif
w$, we then get that
\begin{align*}
& \iint_{Q_T^2} \sgn (u(y,s)-v(x,t)) \, \Big( \Levy^{\mu,r}[A(u(\cdot,s))](y)-\Levy^{\mu,r}[A(v(\cdot,t))](x) \Big) \, \psi \, \dif w \nonumber \\
& \leq \iint_{Q_T^2} \int_{|z|>r} \left|A(u(y+z,s))-A(v(x+z,t))\right| \, \psi(x-y,t,s) \, d\mu(z) \, \dif w\nonumber \\
& \quad -\iint_{Q_T^2} \int_{|z|>r} \left|A(u(y,s))-A(v(x,t))\right| \, \psi(x-y,t,s) \, d\mu(z) \, \dif w, \nonumber \\
& =: I+J.
\end{align*}
Note that these integrals are finite since 
$
\|A(u)\|_{C([0,T];L^1)}\leq
\|A'\|_{L^\infty}\|u\|_{C([0,T];L^1)}
$ ($A$ is Lipschitz-continuous
and $0$ at $0$) and by Fubini (note the convolution integrals in $x$
and $y$),
$$I,J\leq
\Big(\|A(u)\|_{C([0,T];L^1)}+\|A(v)\|_{C([0,T];L^1)}\Big)\|\psi\|_{L^1(\R^d
\times (0,T)^2 )}\int_{|z|>r}\dif \mu(z).$$
We then change variables $(z,x,t,y,s) \rightarrow
(z,x+z,t,y+z,s)$ in $I$,
\begin{align*}
I&=\iint_{Q_T} \int_{|z|>r}  \left|A(u(y,s))-A(v(x,t))\right| \, \psi(x-z-(y-z),t,s)\, \dif \mu(z) \, \dif w
,
\end{align*}
to find that $I+J=0$ and the proof  is complete.
\end{proof}

\begin{lemma}
\label{Kuz:lem} Under the assumptions of Lemma \ref{lem:kuznetsov},
\begin{align*}I=\iint_{Q_{T}^2}
|A(v(x,t))-A(u(y,s))|\,\Levy^{\mu^\ast}_{r}[\phi^{\epsilon,\delta}(x,t,\cdot,s)](y)
\, \dif w\leq C_{\epsilon } \, \int_{0<|z|\leq r} |z|^2 \, \dif
\mu(z),
\end{align*}
where~$C_\epsilon>0$ does not depend on~$r>0$.
\end{lemma}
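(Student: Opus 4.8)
The plan is to reduce the estimate to the $W^{2,1}\!\to\!L^1$ bound of Lemma~\ref{nat:def-singular} applied to the dual L\'evy measure $\mu^\ast$. Fix $(x,t,s)$. Since $\phi^{\epsilon,\delta}(x,t,y,s)=\theta_\delta(t-s)\,\bar\theta_\epsilon(x-y)$, the map $y\mapsto\bar\theta_\epsilon(x-y)$ belongs to $C_c^\infty(\R^d)$, and because $\Levy^{\mu^\ast}_r$ is linear and invariant under translations of $\R^d$ (it has no spatial dependence),
$$
\Levy^{\mu^\ast}_{r}[\phi^{\epsilon,\delta}(x,t,\cdot,s)](y)
=\theta_\delta(t-s)\,\Psi_r(y-x),\qquad
\Psi_r:=\Levy^{\mu^\ast}_{r}[\bar\theta_\epsilon(-\,\cdot)]\in C_c^\infty(\R^d).
$$
The function $z\mapsto|z|^2\mathbf{1}_{0<|z|\le r}$ is even, so \eqref{nat:measure-dual} gives $\int_{0<|z|\le r}|z|^2\,\dif\mu^\ast(z)=\int_{0<|z|\le r}|z|^2\,\dif\mu(z)$; moreover $\|\bar\theta_\epsilon(-\,\cdot)\|_{W^{2,1}(\R^d)}=\|\bar\theta_\epsilon\|_{W^{2,1}(\R^d)}$. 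Hence Lemma~\ref{nat:def-singular} (for the L\'evy measure $\mu^\ast$) yields the $r$-dependent but $x$-independent bound
$$
\|\Psi_r\|_{L^1(\R^d)}\le \|\bar\theta_\epsilon\|_{W^{2,1}(\R^d)}\int_{0<|z|\le r}|z|^2\,\dif\mu(z).
$$

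Next I would plug this into $I$, use $|A(v(x,t))-A(u(y,s))|\le|A(v(x,t))|+|A(u(y,s))|$ and $\theta_\delta\ge0$ to obtain
$$
I\le\iint_{Q_T^2}\bigl(|A(v(x,t))|+|A(u(y,s))|\bigr)\,\theta_\delta(t-s)\,|\Psi_r(y-x)|\,\dif w,
$$
and then apply Fubini term by term. In the first term integrate in $y$ first ($\int_{\R^d}|\Psi_r(y-x)|\,\dif y=\|\Psi_r\|_{L^1(\R^d)}$, uniformly in $x$), then in $s$ ($\int_0^T\theta_\delta(t-s)\,\dif s\le\int_\R\theta_\delta=1$), then in $(x,t)$ ($\int_0^T\|A(v(\cdot,t))\|_{L^1(\R^d)}\,\dif t\le T\,\|A(v)\|_{C([0,T];L^1)}$); the second term is handled symmetrically, integrating in $x$ first. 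Using $\|A(w)\|_{C([0,T];L^1)}\le\|A'\|_{L^\infty(\R)}\,\|w\|_{C([0,T];L^1)}$ (which holds since $A(0)=0$), this gives
$$
I\le \|\bar\theta_\epsilon\|_{W^{2,1}(\R^d)}\,T\,\|A'\|_{L^\infty(\R)}\bigl(\|u\|_{C([0,T];L^1)}+\|v\|_{C([0,T];L^1)}\bigr)\int_{0<|z|\le r}|z|^2\,\dif\mu(z),
$$
so one may take $C_\epsilon:=\|\bar\theta_\epsilon\|_{W^{2,1}(\R^d)}\,T\,\|A'\|_{L^\infty(\R)}\bigl(\|u\|_{C([0,T];L^1)}+\|v\|_{C([0,T];L^1)}\bigr)$, which is manifestly independent of $r$ (it does blow up as $\epsilon\to0$, but that is allowed by the statement).

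This is essentially a bookkeeping argument and I do not expect a serious difficulty. The one point that must be respected is that $\Levy^{\mu^\ast}_r$ applied to a nonnegative test function is not sign-definite, so the absolute value on $\Psi_r$ cannot be dropped; consequently the naive bound $\iint_{\R^d\times\R^d}|\Psi_r(y-x)|\,\dif x\,\dif y=+\infty$ is useless and one must instead exploit that $s\mapsto A(u(\cdot,s))$ and $t\mapsto A(v(\cdot,t))$ are bounded in $L^1(\R^d)$, which is exactly what makes each of the single convolution integrals $\int_{\R^d}|\Psi_r(y-x)|\,\dif y$ and $\int_{\R^d}|\Psi_r(y-x)|\,\dif x$ finite. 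A minor secondary point is the transfer from $\mu$ to $\mu^\ast$, which costs nothing either on the second moment $\int_{0<|z|\le r}|z|^2$ or on the $W^{2,1}$ norm of the reflected kernel.
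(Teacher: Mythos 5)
Your argument is correct and is essentially the paper's own proof: both reduce the nonlocal term to $\theta_\delta(t-s)$ times a fixed kernel of $x-y$ via translation invariance and the definition of $\mu^\ast$, bound $|A(v)-A(u)|\le|A(v)|+|A(u)|$, exploit the convolution structure with Fubini and the $C([0,T];L^1)$ bounds on $A(u)$, $A(v)$, and finish with the $W^{2,1}\to L^1$ estimate of Lemma~\ref{nat:def-singular}. The only cosmetic difference is that the paper converts $\Levy^{\mu^\ast}_r$ into $\Levy^{\mu}_r[\bar\theta_\epsilon](x-y)$ by the change of variable $z\to-z$ before invoking that lemma, whereas you apply it directly to $\mu^\ast$ and the reflected kernel; the resulting constant is the same.
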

\begin{proof}
Easy computations show that
\begin{align*}
& \Levy^{\mu^\ast}_{r}[\phi^{\epsilon,\delta}(x,t,\cdot,s)](y)  \\
& =\theta_\delta(t-s) \, \int_{0<|z|\leq r} \bar{\theta}_\epsilon(x-y-z)-\bar{\theta}_\epsilon(x-y)+z \cdot D  \bar{\theta}_\epsilon(x-y) \, \mathbf{1}_{|z| \leq 1} \, \dif \mu^\ast(z)  \\
& = \theta_\delta(t-s) \, \int_{0<|z|\leq r} \bar{\theta}_\epsilon(x-y+z)-\bar{\theta}_\epsilon(x-y)-z \cdot D  \bar{\theta}_\epsilon(x-y) \, \mathbf{1}_{|z| \leq 1} \, \dif \mu(z)  \\
& = \theta_\delta(t-s) \,
\Levy^{\mu}_{r}[\bar{\theta}_\epsilon](x-y),
\end{align*}

and by Fubini (there are again convolution integrals in~$I$!),
\begin{align*}
I&\leq \iint_{Q_T^2} |A(u(y,s) )-A(v(x,t) )| \, \theta_\delta(t-s)
\left|\Levy^{\mu}_{r}[\bar{\theta}_\epsilon](x-y) \right| \, \dif w
\\
&\leq
\Big(\|A(u)\|_{L^1(Q_T)}+\|A(v)\|_{L^1(Q_T)}\Big)\|\theta_\delta
\, \Levy^{\mu}_{r}[\bar{\theta}_\epsilon]\|_{L^1(\R^{d+1})}\\
&\leq
T\|A'\|_{L^\infty}\Big(\|u\|_{C\left([0,T];L^1\right)}+\|v\|_{C\left([0,T];L^1\right)}\Big)\|\theta_\delta
\, \Levy^{\mu}_{r}[\bar{\theta}_\epsilon]\|_{L^1(\R^{d+1})}.
\end{align*}
By classical properties of approximate units
and Lemma
\ref{nat:def-singular}, 
\begin{align*}
&\|\theta_\delta \,
\Levy^{\mu}_{r}[\bar{\theta}_\epsilon]\|_{L^1(\R^{d+1})} =
\underbrace{\| \theta_\delta \|_{L^1(\R)}}_{=1} \,
\|\Levy^{\mu}_{r}[\bar{\theta}_\epsilon]\|_{L^1(\R^d)}\\
&\leq
\|\bar \theta_\epsilon\|_{W^{2,1}(\R^d)} \int_{0<|z|\leq
    r} |z|^2 \, \dif \mu(z).
\end{align*}
The proof is complete.

\end{proof}

\section{Proofs of the main results}\label{nat:sec-proofs}

The proofs of this section use the so-called doubling of variables
technique of Kruzhkov~\cite{Kru70} along
with ideas from~\cite{Jakobsen/Karlsen,Kuznetsov}; for other relevant references, see also e.g.~\cite{Alibaud,Cifani/Jakobsen,KaUl10} for nonlocal equations. It consists in considering~$u$ as
a function of the new variables~$(y,s)$ and using the approximate
units~$\phi^{\epsilon,\delta}$ in~\eqref{nat:test-kuznetsov} as test
functions. For brevity, we do not specify anymore the variables
of~$u=u(y,s)$,~$v=v(x,t)$
and~$\phi^{\epsilon,\delta}=\phi^{\epsilon,\delta}(x,t,y,s)$ when
the context is clear; recall also that~$\dif x \, \dif
t \, \dif y \, \dif s$ is denoted by~$\dif w$.

\subsection{Proof of Lemma~\ref{lem:kuznetsov}}
 Let~$(x,t)
  \in Q_T$ be fixed and $u=u(y,s)$, $k=v(x,t)$,
  and~$\phi(y,s):=\phi^{\epsilon,\delta}(x,t,y,s)$.
   The entropy inequality for $u$ (see \eqref{entropy_ineq})
  then takes the form
\begin{align*}
&\int_{Q_{T}} |u-v|\,\partial_s \phi^{\epsilon,\delta}+\left(q_f(u,v)+|A(u)-A(v)| \, b_r^{\mu^\ast}\right)\cdot D_y \phi^{\epsilon,\delta} \, \dif y \, \dif s\\
&+\int_{Q_{T}}
|A(u)-A(v)|\,\Levy^{\mu^\ast}_{r}[\phi^{\epsilon,\delta}(x,t,\cdot,s)](y) \, \dif y \, \dif s\\
&+\int_{Q_{T}}\sgn (u-v)\,\Levy^{\mu,r}[A(u(\cdot,s))](y) \,\phi^{\epsilon,\delta} \, \dif y \, \dif s\\
&-\int_{\R^d} |u(y,T)-v(x,t)|\,\phi^{\epsilon,\delta}(x,t,y,T) \, \dif y\\
&+\int_{\R^d} |u_0(y)-v(x,t)|\,\phi^{\epsilon,\delta}(x,t,y,0) \,
\dif y \geq 0.
\end{align*}
We integrate this inequality w.r.t.~$(x,t) \in Q_T$, noting
that~$q_f$ in~\eqref{nat:def-flux} is symmetric, and that
$\partial_s \phi^{\epsilon,\delta}=-\partial_t
\phi^{\epsilon,\delta}$ and~$D_y \phi^{{\epsilon},\delta}=-D_x
\phi^{{\epsilon},\delta}$ by \eqref{nat:test-kuznetsov}.
Consequently we find that
\begin{equation}\label{nat:kuz-1}
\begin{split}
  & I_1+\dots+I_5\\
  & :=-\iint_{Q_{T}^2} |u-v|\,\partial_t \phi^{\epsilon,\delta}+\bigg(q_f(v,u)+ |A(u)-A(v)| \, b_r^{\mu^\ast} \bigg) \cdot D_x \phi^{{\epsilon},\delta} \, \dif w\\
  & \quad +\iint_{Q_{T}^2} |A(u)-A(v)|\,\Levy^{\mu^\ast}_{r}[\phi^{\epsilon,\delta}(x,t,\cdot,s)](y) \, \dif w \\
  & \quad +\iint_{Q_{T}^2} \sgn (u-v)\,\Levy^{\mu,r}[A(u(\cdot,s))](y) \,\phi^{\epsilon,\delta} \, \dif w\\
  & \quad -\iint_{Q_T \times \R^d} |u(y,T)-v(x,t)|\,\phi^{\epsilon,\delta}(x,t,y,T) \, \dif x \, \dif t \, \dif y\\
  & \quad +\iint_{Q_T \times \R^d}
  |u_0(y)-v(x,t)|\,\phi^{\epsilon,\delta}(x,t,y,0) \, \dif x \, \dif t
  \, \dif y \geq 0.
\end{split}
\end{equation}
Note that the terms in the inequality above are well-defined since
they are all essentially of the form of convolution integrals of
$L^1$-functions. See Lemma \ref{nat:def-singular}, Remark~\ref{rem:splitting-b},
and the discussions in the proofs of Lemmas
\ref{nat:pro-kato-d} and~\ref{Kuz:lem} for more details.

A classical computation from \cite{Kuznetsov} reveals that
\begin{equation*}
\begin{split}
&I_4+I_5
-\iint_{\R^d \times Q_T} |u(y,s)-v(x,T)|\,\phi^{\epsilon,\delta}(x,T,y,s) \, \dif x \, \dif y \, \dif s\\
&\quad+\iint_{\R^d \times Q_T} |u(y,s)-v_0(x)|\,\phi^{\epsilon,\delta}(x,0,y,s) \, \dif x \, \dif y \, \dif s\\
&\leq -\|u(T)-v(T)\|_{L^1(\R^d)}+\|u_0-v_0\|_{L^1(\R^d)}\\
&\quad+ \epsilon \, C_{\tilde\theta} \, |u_0|_{BV(\R^d)}+2\omega_{u}
(\delta) \vee \omega_{v} (\delta),
\end{split}
\end{equation*}
where $C_{\tilde\theta}$ is as in Lemma \ref{lem:kuznetsov}. 
Lemma \ref{lem:kuznetsov} now follows from
\eqref{nat:kuz-1} and the above estimates on $I_4$ and $I_5$.


\subsection{Proof of Theorem~\ref{th:nonlin}}

The proof uses the Kuznetsov lemma, and morally speaking it amounts to
subtracting the $u(y,s)$ and $v(x,t)$ equations,
multiplying by $\sgn(u-v)$, 
and then applying both new and classical tricks to arrive at an
$L^1$-estimate of $|u-v|$. We expect to see terms involving
$$\sgn(u-v)\Big(\Levy^{\mu,r}[A(u)]-\Levy^{\mu,r}[B(v)]\Big),$$
and naively we can write this as 
$$\sgn(u-v)\Levy^{\mu,r}[(A-B)(u)]+\sgn(u-v)\Levy^{\mu,r}[B(u)-B(v)].$$
These terms are estimated by Kato type inequalities (see Lemmas
\ref{nat:ipp} and \ref{nat:pro-kato-d}), the first term 
should give the dependence on $A-B$ while the second term is a
nonpositive term that also appears in the uniqueness
proof. {\em The problem with this approach is that we can not apply Kato
for the first term because $A-B$ then have to be monotone!} 

There are different ways to overcome this monotonicity problem, and we
have chosen to adapt ideas from \cite{Jakobsen/Karlsen} -- a
paper on continuous dependence estimates for fully
nonlinear Bellman-Isaacs type of equations via viscosity solution techniques. 
We consider the region where~$A' \geq B'$ and
its complementary.  Let $E_\pm$ be sets satisfying:
\begin{equation}\label{nat:Epm-nl}
\begin{cases}
\mbox{$E_\pm \subseteq \R$ are Borel sets};\\
\cup_\pm E^\pm=\R \mbox{ and } \cap_\pm E_\pm=\emptyset;\\
 \R\setminus\mathrm{supp}(A'-B')^\mp \subseteq E_\pm.
\end{cases}
\end{equation}
For all~$u \in \R$, we define
\begin{equation}\label{nat:cnl-not}
\begin{split}
& A_\pm(u):=\int_{0}^u A'(\tau) \, \mathbf{1}_{E_\pm}(\tau) \, \dif \tau,\\
& B_\pm(u):=\int_{0}^u B'(\tau) \, \mathbf{1}_{E_\pm}(\tau) \, \dif \tau,\\
& C_\pm(u):=\pm (A_\pm(u)-B_\pm(u)).
\end{split}
\end{equation}
These functions satisfy the following properties:
\begin{lemma}\label{nat:lem-dnl}
Under the assumptions of Theorem~\ref{th:nonlin},
\begin{itemize}
\item[(i)]$A=A_++A_-$ and~$B=B_++B_-$;
\item[(ii)] $A_{\pm},B_{\pm},C_\pm$ satisfy~\eqref{Aflux}, in
  particular, they are monotone;
\item[(iii)] $\sum_\pm |C_\pm(u)|_{L^1(0,T;BV)} \leq  \|A'-B'\|_{L^\infty(\R)} \, |u|_{L^1(0,T;BV)}$;
\item[(iv)] for all~$z \in \R^d \setminus \{0\}$,
$$
\sum_\pm \|C_\pm(u(\cdot+z,\cdot))-C_\pm(u)\|_{L^1(Q_T)} \leq
\|A'-B'\|_{L^\infty(\R)} \, \|u(\cdot+z,\cdot)-u\|_{L^1(Q_T)}.
$$
\end{itemize}
\end{lemma}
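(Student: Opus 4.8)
The plan is to prove the four claims essentially by unwinding the definitions in \eqref{nat:cnl-not}, using the fundamental theorem of calculus together with the constraints \eqref{nat:Epm-nl} on the sets $E_\pm$. For (i), I would simply note that $A'\mathbf 1_{E_+}+A'\mathbf 1_{E_-}=A'$ a.e.\ because $E_+\cup E_-=\R$ and $E_+\cap E_-=\emptyset$, so integrating from $0$ to $u$ gives $A_++A_-=A$; the same argument applies to $B$. For (ii), each of $A_\pm,B_\pm$ is an antiderivative of a bounded nonnegative function, hence Lipschitz and nondecreasing with value $0$ at $0$, i.e.\ satisfies \eqref{Aflux}. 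The only delicate point is $C_\pm$: I would check that $\pm(A_\pm-B_\pm)$ is \emph{nondecreasing}. Its derivative is $\pm(A'-B')\mathbf 1_{E_\pm}$, and by the third line of \eqref{nat:Epm-nl} we have $\R\setminus\mathrm{supp}(A'-B')^\mp\subseteq E_\pm$, so on $E_\pm$ the function $(A'-B')^\mp$ vanishes, which forces $\pm(A'-B')\ge 0$ there. Hence $\pm(A'-B')\mathbf 1_{E_\pm}\ge 0$ a.e., so $C_\pm$ is nondecreasing; it is clearly Lipschitz with $C_\pm(0)=0$, so it satisfies \eqref{Aflux}.

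For (iii) and (iv), the key quantitative observation is that $|C_+'(\tau)|+|C_-'(\tau)| = (A'-B')^+(\tau)\mathbf 1_{E_+}(\tau)+(A'-B')^-(\tau)\mathbf 1_{E_-}(\tau)$ pointwise a.e., which is bounded by $|A'(\tau)-B'(\tau)|\le\|A'-B'\|_{L^\infty(\R)}$ — again using that on $E_+$ only the positive part of $A'-B'$ survives and on $E_-$ only the negative part. Consequently $C_\pm$ is $\|A'-B'\|_{L^\infty}$-Lipschitz \emph{jointly}, in the sense that $\sum_\pm|C_\pm(a)-C_\pm(b)|\le\|A'-B'\|_{L^\infty}|a-b|$ for all $a,b\in\R$. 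Given this, (iv) is immediate: apply the bound pointwise with $a=u(x+z,t)$, $b=u(x,t)$ and integrate over $Q_T$. For (iii), I would recall that for a Lipschitz function $g$ and any $w\in BV$ one has $|g(w)|_{BV}\le(\lip g)\,|w|_{BV}$; applying this to $g=C_\pm$ for a.e.\ fixed $t$ and integrating the spatial $BV$ seminorm over $(0,T)$ gives $\sum_\pm|C_\pm(u)|_{L^1(0,T;BV)}\le\|A'-B'\|_{L^\infty}|u|_{L^1(0,T;BV)}$, where the sum of the Lipschitz constants collapses to $\|A'-B'\|_{L^\infty}$ by the joint-Lipschitz estimate above.

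The only step requiring genuine care — and what I expect to be the main (albeit mild) obstacle — is the manipulation of the sign constraints: one must be careful that the inclusion $\R\setminus\mathrm{supp}(A'-B')^\mp\subseteq E_\pm$ is used with the correct sign pairing, so that on $E_+$ one has $A'-B'\ge 0$ (whence $(A'-B')\mathbf 1_{E_+}=(A'-B')^+\mathbf 1_{E_+}$) and on $E_-$ one has $A'-B'\le 0$ (whence $-(A'-B')\mathbf 1_{E_-}=(A'-B')^-\mathbf 1_{E_-}$), and to observe that no double counting occurs because $E_+\cap E_-=\emptyset$. Once the pointwise identities for $C_\pm'$ and $|C_+'|+|C_-'|$ are established cleanly, (i)--(iv) all follow by elementary integration and standard $BV$ chain-rule facts.
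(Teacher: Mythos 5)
Your proposal is correct in substance and follows the only natural route; the paper itself offers no written proof here (it declares (i)--(ii) immediate and (iii)--(iv) standard consequences of the chain rule for Lipschitz compositions of $BV$ functions), so you are supplying exactly the details that were left to the reader. Parts (i), (iii) and (iv) are fine: the identity $|C_+'|+|C_-'|=|A'-B'|\,(\mathbf 1_{E_+}+\mathbf 1_{E_-})=|A'-B'|$ a.e.\ needs only the partition property of $E_\pm$, and from it the joint Lipschitz bound, hence (iv) by pointwise application and integration, and (iii) by the (vector-valued, $\ell^1$) $BV$ chain rule applied to $(C_+,C_-)$ for a.e.\ $t$.

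The one step that deserves correction is precisely the one you flag as delicate. From the stated inclusion $\R\setminus\mathrm{supp}(A'-B')^{\mp}\subseteq E_\pm$ you \emph{cannot} conclude that $(A'-B')^{\mp}$ vanishes on $E_\pm$: taking complements and using that $E_\mp=\R\setminus E_\pm$, the inclusion yields $E_\pm\subseteq\mathrm{supp}(A'-B')^{\pm}$, which is the reverse of what your deduction requires, and membership in $\mathrm{supp}(A'-B')^{+}$ does not rule out $(A'-B')^{-}>0$ there, since the two supports may overlap (indeed they do wherever $A'-B'$ changes sign continuously). What your argument actually uses — and what the monotonicity of $C_\pm$ in (ii) genuinely requires — is the property $\pm(A'-B')\ge 0$ a.e.\ on $E_\pm$, i.e.\ $(A'-B')\mathbf 1_{E_\pm}=\pm(A'-B')^{\pm}\mathbf 1_{E_\pm}$ a.e. This is clearly the intended content of the third line of \eqref{nat:Epm-nl} (morally $E_+=\{A'\ge B'\}$ and $E_-=\{A'<B'\}$ for Borel representatives of the derivatives, with the set $\{A'=B'\}$ split arbitrarily), and you should simply take it as the defining property of the partition rather than attempt to derive it from the support inclusion as literally written. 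With that reading, the rest of your proof of (ii), and hence the whole lemma, goes through.
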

The proofs of~(i) and~(ii) are immediate, whereas~(iii) and~(iv)
follow from standard arguments for Lipschitz-continuous
and~$BV$-functions (see e.g.~\cite{Perthame,EvGa92,Rud74}); the
details are left to the reader. 

In the proof below, $A^\pm-B^\pm$ will be the monotone functions
replacing the nonmonotone function $A-B$ of the formal argument above.

\begin{proof}[Proof of Theorem~\ref{th:nonlin}]
Let us divide the proof into several steps.

\medskip

\noindent {\bf 1.}  We argue as in the beginning of the proof of
Lemma~\ref{lem:kuznetsov} changing the roles of~$u$ and~$v$. We fix
$(y,s)$ and take~$k=u(y,s)$ and
$\phi^{\epsilon,\delta}=\phi^{\epsilon,\delta}(x,t,y,s)$ in the
entropy inequality for $v=v(x,t)$ to find that
\begin{equation*}
\begin{split}
& \iint_{Q_{T}^2} |v-u|\,\partial_t \phi^{\epsilon,\delta}+\left(q_g(v,u)+ |B(v)-B(u)| \, b_r^{\mu^\ast} \right) \cdot D_x \phi^{\epsilon,\delta} \, \dif w\\
&+\iint_{Q_{T}^2} |B(v)-B(u)|\,\Levy^{\mu^\ast}_{r}[\phi^{\epsilon,\delta}(\cdot,t,y,s)](x) \, \dif w \\
&+\iint_{Q_{T}^2} \sgn (v-u)\,\Levy^{\mu,r}[B(v(\cdot,t))](x) \,\phi^{\epsilon,\delta} \, \dif w\\
& -\iint_{\R^d \times Q_T} |v(x,T)-u(y,s)|\,\phi^{\epsilon,\delta}(x,T,y,s) \, \dif x \, \, \dif y \, \dif s\\
& +\iint_{\R^d \times Q_T}
|v_0(x)-u(y,s)|\,\phi^{\epsilon,\delta}(x,0,y,s) \, \dif x \, \,
\dif y \, \dif s \geq 0.
\end{split}
\end{equation*}
Then we add this inequality and inequality \eqref{kuz} in
Lemma~\ref{lem:kuznetsov},
\begin{equation}\label{nat:cnl-1}
\begin{split}
&\|u(T)-v(T)\|_{L^{1}(\mathbb{R}^d)}\\
& \leq \|u_0-v_0\|_{L^{1}(\mathbb{R}^d)}+\epsilon \, C_{\tilde\theta} \, |u_0|_{BV(\R^d)}+2\,\omega_u(\delta) \vee \omega_v(\delta)\\
& \quad +\underbrace{\iint_{Q_{T}^2}(q_g-q_f)(v,u) \cdot D_x  \phi^{\epsilon,\delta} \, \dif w}_{=:I_1}\\
&\quad +\underbrace{\iint_{Q_{T}^2}|B(v)-B(u)| \,\mathcal{L}^{\mu^\ast}_r[\phi^{\epsilon,\delta}(\cdot,y,t,s)](x)\, \dif w}_{=:I_2}\\
&\quad+\underbrace{\iint_{Q_{T}^2}
|A(v)-A(u)|\,\Levy^{\mu^\ast}_{r}[\phi^{\epsilon,\delta}(x,t,\cdot,s)](y)
\, \dif w}_{=:I_2'} \\
&\quad
+\underbrace{\iint_{Q_{T}^2} \Big(|B(v)-B(u)|-|A(v)-A(u)|\Big) \, b_r^{\mu^\ast} \, \cdot D_x \phi^{\epsilon,\delta} \, \dif w}_{=:I_3}\\
&\quad +\underbrace{\iint_{Q_{T}^2}\sgn(v-u)\,
  \Big(\mathcal{L}^{\mu,r}[B(v(\cdot,t))](x)-\mathcal{L}^{\mu,r}[A(u(\cdot,s))](y)\Big)
  \,\phi^{\epsilon,\delta}\, \dif w}_{=: I_4},
\end{split}
\end{equation}
where $r,\epsilon>0$, $0<\delta<T$, and $C_{\tilde\theta}>0$ only
depends on the kernel $\tilde \theta_d $ from
\eqref{nat:smooth-kernel}.
\medskip

\noindent {\bf 2.} It is standard to estimate $I_1$ (cf.
e.g.~\cite{Da:Book,Kuznetsov}), and $I_2+I_2'$ can be estimated by
Lemma~\ref{Kuz:lem},
\begin{align}\label{nat:end}
I_1 & \leq  |u_0|_{BV(\R^d)} \, T \, \|f'-g'\|_{L^\infty(\R,\R^d)},\\
I_2+I_2' & \leq   C_\epsilon \, \int_{0<|z| \leq r} |z|^2 \, \dif
\mu(z),\label{nat:end2}
\end{align}
where~$C_\epsilon$ does not depend on~$r>0$. 
Now we focus on~$I_3$
and~$I_4$.

\medskip

\noindent\textbf{3.} {\em Cutting w.r.t.~$E_\pm$.} We split $I_3$
and~$I_4$ into four new terms using the sets $E_\pm$, see
\eqref{nat:Epm-nl}--\eqref{nat:cnl-not}. By
Lemma~\ref{nat:lem-dnl}~(i), $I_4$ can be written as
\begin{align*}
I_4=\sum_\pm \iint_{Q_{T}^2}\sgn(v-u)\,
  \Big(\mathcal{L}^{\mu,r}[B_\pm(v(\cdot,t))](x)-\mathcal{L}^{\mu,r}[A_\pm(u(\cdot,s))](y)\Big)
  \,\phi^{\epsilon,\delta}\, \dif w.
\end{align*}
By Lemma~\ref{nat:lem-dnl}~(ii), we can apply twice
Lemma~\ref{nat:pro-kato-d} with $B_+$ and $A_-$ instead of $A$,
followed by the definitions of $C_\pm$, see \eqref{nat:cnl-not}, to
show that
\begin{align}
 I_4\leq & \,\iint_{Q_{T}^2}\sgn(v-u)\,
\mathcal{L}^{\mu,r}\Big[B_+(u(\cdot,s))-A_+(u(\cdot,s))\Big](y)
\,\phi^{\epsilon,\delta}\, \dif w\nonumber\\
&+\iint_{Q_{T}^2}\sgn(v-u)\,
\mathcal{L}^{\mu,r}\Big[B_-(v(\cdot,t))-A_-(v(\cdot,t))\Big](x)
\,\phi^{\epsilon,\delta}\, \dif w\nonumber\\
= &\,\iint_{Q_{T}^2}\sgn(u-v)\,
\mathcal{L}^{\mu,r}\big[C_+(u(\cdot,s))\big](y)
\,\phi^{\epsilon,\delta}\, \dif w\nonumber\\
&+\iint_{Q_{T}^2}\sgn(v-u)\,
\mathcal{L}^{\mu,r}\big[C_-(v(\cdot,t))\big](x)
\,\phi^{\epsilon,\delta}\, \dif w\nonumber\\
=:& \ I_4^++I_4^-.\label{nat:cnl-2-bis}
\end{align}
Note that it is crucial to have $u$ in the first term and $v$ in the
second -- otherwise we will not be able to apply the Kato inequality
later on!

We now consider $I_3$. By~\eqref{nat:key},
Lemma~\ref{nat:lem-dnl}~(i)--(ii), the formula~$D_x
\phi^{\epsilon,\delta}=-D_y \phi^{\epsilon,\delta}$, and the
definitions $D_+=D_y$ and $D_-=D_x$, it follows that
\begin{align*}
& \Big(|B(v)-B(u)|-|A(v)-A(u)|\Big) D_x \phi^{\epsilon,\delta}\\
& = \sgn (u-v) \, \Big\{\left(A(u)-B(u)\right)-\left(A(v)-B(v)\right) \Big\} \, D_y \phi^{\epsilon,\delta}\\
& = \sum_{\pm} \sgn (u-v) \, \Big\{\pm\left(A_\pm(u)-B_\pm(u)\right)\mp\left(A_\pm(v)-B_\pm(v)\right) \Big\} \, D_\pm \phi^{\epsilon,\delta}\\
& = \sum_{\pm} |C_\pm(u)-C_\pm(v)| \, D_\pm \phi^{\epsilon,\delta}.
\end{align*}
 We can then rewrite~$I_3$ as
\begin{equation}\label{nat:cnl-1-bis}
I_3= \sum_\pm \underbrace{\iint_{Q_T} |C_\pm(u)-C_\pm(v)| \,
b^{\mu^\ast}_r \cdot D_\pm \phi^{\epsilon,\delta} \, \dif
w}_{=:I_3^\pm}.
\end{equation}

\medskip

\noindent {\bf 4.} {\em Cutting w.r.t.~$z$.} We decompose $
\Levy^{\mu,r}$ into two new terms using a new cutting parameter
$r_1>r$. Let $\mu=\mu_1+\mu_{|_{|z|>r_1}}$ for
$$
\mu_1: =\mu_{|_{0<|z| \leq r_1}},
$$
and note that by \eqref{nat:def-decomposition-outer},
$\Levy^{\mu,r}=\Levy^{\mu_{1},r}+\Levy^{\mu,r_1}$. Then
\begin{equation}\label{nat:cnl-kato-bis}
\begin{split}
&I_4^+ = \underbrace{\iint_{Q_{T}^2}\sgn(u-v)\,
\mathcal{L}^{\mu_1,r}[C_+(u(\cdot,s))](y) \,\phi^{\epsilon,\delta}\,
\dif
w}_{=:I_5^+}\\
&\qquad\qquad+\iint_{Q_{T}^2}\sgn(u-v)\,
\mathcal{L}^{\mu,r_1}[C_+(u(\cdot,s))](y) \,\phi^{\epsilon,\delta}\,
\dif w.
\end{split}
\end{equation}
Since $C_+$ satisfies \eqref{Aflux} by Lemma~\ref{nat:lem-dnl}~(ii)
and $\mu_1$ clearly satisfies \eqref{levy-measure}, we can apply the
Kato type inequality in Lemma~\ref{nat:ipp} (with~$k=v(x,t)$ and
$A=C_+$) to show that
\begin{align*}
& I_5^+ =\int_{Q_{T}} \int_{Q_T} \sgn(u(y,s)-v(x,t))\, \mathcal{L}^{\mu_1,r}[C_+(u(\cdot,s))](y) \,\phi^{\epsilon,\delta}\, \dif y \, \dif s \, \dif x \, \dif t\\
& \leq \int_{Q_{T}} \int_{Q_T} |C_+(u(y,s))-C_+(v(x,t))|
\,\Levy^{\mu_1^\ast,r}[\phi^{\epsilon,\delta}(x,t,\cdot,s)](y)\,
\dif y \, \dif s \, \dif x \, \dif t.
\end{align*}
Adding~$I_3^+$ in the form \eqref{nat:cnl-1-bis} then gives
\begin{equation}\label{nat:cnl-4}
I_3^++I_5^+\leq \iint_{Q_{T}^2} |C_+(u)-C_+(v)| \left(
b_r^{\mu^\ast} \, \cdot D_y \phi^{\epsilon,\delta}
+\Levy^{\mu_1^\ast,r}[\phi^{\epsilon,\delta}(x,t,\cdot,s)](y)
\right) \dif w.
\end{equation}

Now easy computations show that
$$D_y \phi^{\epsilon,\delta}=-\theta_\delta(t-s) \, D
\bar{\theta}_\epsilon(x-y), \quad
\Levy^{\mu_1^{\ast},r}[\phi^{\epsilon,\delta}(x,t,\cdot,s)](y)=\theta_\delta(t-s)
\, \Levy^{\mu_1,r}[\bar{\theta}_{\epsilon}](x-y).$$ Hence by adding
and subtracting~$z \cdot D \bar{\theta}_{\epsilon} (x-y)$, we get
that
\begin{equation}
\begin{split}\label{nat:cnl-6}
& b_r^{\mu^\ast} \, \cdot D_y \phi^{\epsilon,\delta}+\Levy^{\mu_1^{\ast},r}[\phi^{\epsilon,\delta}(x,t,\cdot,s)](y)\\
& = \theta_\delta(t-s) \int_{r<|z|\leq r_1} \bar{\theta}_\epsilon(x-y+z)-\bar{\theta}_\epsilon(x-y)-z \cdot D \bar{\theta}_{\epsilon} (x-y) \, \dif \mu(z) \\
& \quad +\theta_\delta(t-s) \, D \bar{\theta}_{\epsilon} (x-y) \cdot
\underbrace{\left(-b_r^{\mu^\ast}+\int_{r<|z|\leq r_1} z \, \dif
\mu(z)\right)}_{=\sgn(r_1-1) \int_{r_1 \wedge (1 \vee r) <|z|\leq
r_1 \vee 1} z \, \dif \mu(z)},
\end{split}
\end{equation}
where the last equality comes from~\eqref{nat:def-decomposition-b}
and the change of variable~$z \rightarrow -z$. We
insert~\eqref{nat:cnl-6} into \eqref{nat:cnl-4} and combine the
resulting inequality with~\eqref{nat:cnl-kato-bis},
\begin{equation}\label{nat:cnl-8}
\begin{split}
& I_3^++I_4^+\leq \\
& \iint_{Q_{T}^2} |C_+(u)-C_+(v)|\\
& \quad\cdot\theta_\delta (t-s) \int_{r<|z|\leq r_1}  \bar{\theta}_\epsilon(x-y+z)-\bar{\theta}_\epsilon(x-y)-z \cdot D \bar{\theta}_{\epsilon} (x-y) \, \dif \mu(z) \, \dif w\\
&  +\iint_{Q_{T}^2} |C_+(u)-C_+(v)|\\
& \qquad\cdot \theta_\delta (t-s) \, D \bar{\theta}_\epsilon(x-y) \cdot \sgn(r_1-1) \int_{r_1 \wedge (1 \vee r) <|z|\leq r_1 \vee 1} z \, \dif \mu(z) \, \dif w\\
&  +\iint_{Q_{T}^2}\sgn(u-v)\, \mathcal{L}^{\mu,r_1}[C_+(u(\cdot,s))](y) \,\phi^{\epsilon,\delta}\, \dif w\\
& =:J_1^++J_2^++J_3^+.
\end{split}
\end{equation}
Similar arguments show that we can bound $I_3^-+I_4^-$ (see
\eqref{nat:cnl-2-bis}--\eqref{nat:cnl-1-bis}) as follows,
\begin{equation}\label{nat:cnl-8-bis}
\begin{split}
& I_3^-+I_4^-\leq  \\
& \iint_{Q_{T}^2} |C_-(v)-C_-(u)|\\
&\quad\cdot \theta_\delta (t-s) \int_{r<|z|\leq r_1} \bar{\theta}_\epsilon(x-y-z)-\bar{\theta}_\epsilon(x-y)+z \cdot D \bar{\theta}_{\epsilon} (x-y) \, \dif \mu(z) \, \dif w\\
& -\iint_{Q_{T}^2} |C_-(v)-C_-(u)| \\
&\qquad\cdot \theta_\delta (t-s) \, D \bar{\theta}_\epsilon(x-y) \cdot \sgn(r_1-1) \int_{r_1 \wedge (1 \vee r) <|z|\leq r_1 \vee 1} z \, \dif \mu(z) \, \dif w\\
& +\underbrace{\iint_{Q_{T}^2} \sgn(v-u)\,
  \mathcal{L}^{\mu,r_1}[C_-(v(\cdot,t))](x) \,\phi^{\epsilon,\delta}\,
  \dif w}_{\leq \iint_{Q_{T}^2} \sgn(v-u)\,
  \mathcal{L}^{\mu,r_1}[C_-(u(\cdot,s))](y) \,\phi^{\epsilon,\delta}\,
  \dif w\ \text{by Lemma \ref{nat:pro-kato-d}
  }}\\
& =:J_1^-+J_2^-+J_3^-.
\end{split}
\end{equation}

\medskip
\noindent {\bf 5.} {\em $L^1 \cap BV$-regularity.} It remains to
estimate~$J_i^\pm$ for $i=1,\dots,3$
in~\eqref{nat:cnl-8}--\eqref{nat:cnl-8-bis}. For~$J_1^\pm$
and~$J_2^\pm$, we integrate by parts to take
advantage of the~$BV$-regularity of~$u$. After
some technical computations detailed in Appendix~\ref{nat:app-tech}, we find that
\begin{align}
|J_1^\pm|&\leq \frac{1}{2\epsilon} \, \int_{\R^d}|D\tilde\theta_d|\dif x \, \int_{r<|z| \leq r_1} |z|^2 \, \dif \mu(z) \, |C_\pm(u)|_{L^1(0,T;BV)},\label{nat:esti-sing-1}\\
|J_2^\pm|&\leq \left|\int_{r_1 \wedge (1 \vee r) <|z|\leq r_1 \vee
1} z \, \dif \mu(z) \right| \,  |C_\pm(u)|_{L^1(0,T;BV)},\label{nat:esti-sing-2}
\end{align}
and hence
\begin{align*}
\sum_\pm (J_1^\pm+J_2^\pm) \leq \,&\frac{1}{2\epsilon} \,
\int_{\R^d}|D\tilde\theta_d|\dif x  \int_{r<|z| \leq r_1} |z|^2 \, \dif \mu(z) \, \sum_\pm |C_\pm(u)|_{L^1(0,T;BV)}\\
&+ \left|\int_{r_1 \wedge (1 \vee r) <|z|\leq r_1 \vee 1} z \, \dif
\mu(z) \right| \sum_\pm |C_\pm(u)|_{L^1(0,T;BV)}.
\end{align*}
By Lemma~\ref{nat:lem-dnl}~(iii) and a priori estimates for $u$, cf.
\eqref{nat:nonincrease}, we see that
\begin{align}
\sum_\pm (J_1^\pm+J_2^\pm) \leq  & \,\frac{1}{2\epsilon} \,  \int_{\R^d}|D\tilde\theta_d|\dif x  \, \underbrace{|u|_{L^1(0,T;BV)}}_{\leq |u_0|_{BV(\R^d)} \, T} \int_{r<|z| \leq r_1} |z|^2 \, \dif \mu(z) \, \|A'-B'\|_{L^\infty(\R)}\nonumber\\
&+ \underbrace{|u|_{L^1(0,T;BV)}}_{\leq |u_0|_{BV(\R^d)} \, T}
\left|\int_{r_1 \wedge (1 \vee r) <|z|\leq r_1 \vee 1} z \, \dif
\mu(z) \right| \, \|A'-B'\|_{L^\infty(\R)}.\label{nat:cnl-9-bis}
\end{align}

Let us now estimate~$J_3^+$ in~\eqref{nat:cnl-8}. Easy computations
(see the proofs of Lemmas~\ref{nat:pro-kato-d}--\ref{Kuz:lem})
show that
$$
J_{3}^+ \leq \|\theta_\delta \,
\bar{\theta}_\epsilon\|_{L^1(\R^{d+1})} \,
\|\mathcal{L}^{\mu,r_1}[C_+(u)]\|_{L^1(Q_T)}.
$$
Let us recall that~$\|\theta_\delta \,
\bar{\theta}_\epsilon\|_{L^1(\R^{d+1})}=\|\theta_\delta\|_{L^1(\R)}
\, \|\bar{\theta}_\epsilon\|_{L^1(\R^{d})}=1$, and then 
\begin{align*}
J_{3}^+ \leq 
\, \int_{0}^T 
\int_{|z|>r_1}
\|C_+(u(\cdot+z,s))-C_+(u(\cdot,s))\|_{L^1(\R^d)} \, \dif \mu(z)
 \dif s.
\end{align*}
Since $C_+(u) \in L^\infty \cap C([0,T];L^1) 
$, $(z,s) \rightarrow
\|C_+(u(\cdot+z,s))-C_+(u(\cdot,s))\|_{L^1(\R^d)}$ 
is a continuous function, hence Borel and~$\dif \mu(z) \, \dif s$-measurable. Thus, we may change
the order of the integration to find
\begin{align*}
J_{3}^+ \leq 
\int_{|z|>r} \|C_+(u(\cdot+z,\cdot))-C_+(u)\|_{L^1(Q_T)} \, \dif
\mu(z).
\end{align*}
We get a similar estimates for $J_3^-$ and find by
Lemma~\ref{nat:lem-dnl}~(iii)--(iv) and~\eqref{nat:nonincrease} that
\begin{align}
 \sum_\pm J_{3}^\pm
& \leq 
\int_{|z|>r_1} \sum_\pm \|C_\pm(u(\cdot+z,\cdot))-C_\pm(u)\|_{L^1(Q_T)} \, \dif \mu(z), \nonumber \\
& \leq 
\int_{|z|>r_1}
\underbrace{\|u(\cdot+z,\cdot))-u\|_{L^1(Q_T)}}_{\leq T \,
\|u_0(\cdot+z)-u_0\|_{L^1(\R^d)}} \, \dif \mu(z) \,
\|A'-B'\|_{L^\infty(\R^d)}.\label{nat:cnl-10-bis}
\end{align}
The last inequality (under the bracket) comes
from~\eqref{nat:L1-contraction} applied to the
solution~$u(\cdot+z,\cdot)$ of~\eqref{1} with initial data
$u_0(\cdot+z)$.

\medskip

\noindent {\bf 6.} {\em  Conclusion.} By
\eqref{nat:cnl-2-bis}--\eqref{nat:cnl-1-bis}
and~\eqref{nat:cnl-8}--\eqref{nat:cnl-8-bis}, $I_3+I_4 \leq \sum_\pm
\sum_{i=1}^3 J_i^\pm$. Therefore we may estimate \eqref{nat:cnl-1}
by \eqref{nat:end}--\eqref{nat:end2} and
\eqref{nat:cnl-9-bis}--\eqref{nat:cnl-10-bis}.
 For all~$r_1>r>0$, $\epsilon>0$, and $T>\delta>0$, we find that
\begin{equation}
\begin{split}\label{nat:cnl-13}
& \|u(T)-v(T)\|_{L^{1}(\mathbb{R}^d)}\\
& \leq \|u_0-v_0\|_{L^{1}(\mathbb{R}^d)} + |u_0|_{BV(\R^d)} \, T \, \|f'-g'\|_{L^\infty(\R,\R^d)}\\
& \quad + \epsilon \, C_{\tilde\theta} \, |u_0|_{BV(\R^d)}+ 2\,\omega_u(\delta) \vee \omega_v(\delta)+C_\epsilon \, \int_{0<|z| \leq r} |z|^2 \, \dif \mu(z)\\
& \quad +\frac{1}{2\epsilon} \, \int_{\R^d}|D\tilde\theta_d|\dif x \, |u_0|_{BV(\R^d)} \, T  \int_{r<|z| \leq r_1} |z|^2 \, \dif \mu(z) \, \|A'-B'\|_{L^\infty(\R^d)}\\
& \quad +|u_0|_{BV(\R^d)} \, T \left|\int_{r_1 \wedge (1 \vee r) <|z|\leq r_1 \vee 1} z \, \dif \mu(z) \right| \, \|A'-B'\|_{L^\infty(\R^d)}\\
& \quad +T \int_{|z|>r_1} \|u_0(\cdot+z)-u_0\|_{L^1(\R^d)} \, \dif
\mu(z) \, \, \|A'-B'\|_{L^\infty(\R^d)},
\end{split}
\end{equation}
where~$C_\epsilon>0$ does not depend on~$r>0 $.

To finish, we first pass to the limit as~$r \rightarrow 0$
in~\eqref{nat:cnl-13}. By the dominated convergence theorem, the
result is equivalent to setting $r=0$ in each term, and in
particular the term $C_\epsilon \, \int_{0<|z| \leq r} |z|^2 \, \dif
\mu(z)$ vanishes. Secondly, we pass to the limit as~$\delta
\rightarrow 0$ to get rid of the term $2\,\omega_u(\delta) \vee
\omega_v(\delta)$. Finally, we optimize the remaining terms
w.r.t.~$\epsilon>0$ by using the formula~$ \min_{\epsilon>0}
\left(\epsilon \, a+\frac{b}{\epsilon} \right)=2\sqrt{ab} $
(for~$a,b \geq 0$). This gives us the following continuous
dependence estimate: For all~$r_1>0$,
\begin{equation}\label{nat:cnl-14}
\begin{split}
& \|u-v\|_{C\left([0,T];L^{1}\right)} \leq\|u_0-v_0\|_{L^{1}(\mathbb{R}^d)}+|u_0|_{BV(\R^d)} \, T \, \|g'-f'\|_{L^\infty(\R,\R^d)}\\
& \quad + 2\sqrt{\frac{1}{2} \, C_{\tilde\theta} \,
\int_{\R^d}|D\tilde\theta_d|\dif x \, |u_0|^2_{BV(\R^d)} \, T
\int_{0<|z| \leq r_1} |z|^2 \, \dif \mu(z)
 \, \|A'-B'\|_{L^\infty(\R)}}\\
& \quad + |u_0|_{BV(\R^d)} \, T\, \left|\int_{r_1 \wedge 1 < |z|
\leq r_1 \vee 1} z \, \dif \mu(z)\right|
 \,
\|A'-B'\|_{L^\infty(\R)}\\
& \quad +  T \, \int_{|z| \geq r_1} \|u_0(\cdot+z)-u_0
\|_{L^1(\R^d)} \, \dif \mu(z) \, \|A'-B'\|_{L^\infty(\R)},
\end{split}
\end{equation}
where $\tilde\theta_d$ is an arbitrary approximate unit
\eqref{nat:smooth-kernel} and $C_{\tilde\theta}=2 \int_{\R^d} |x| \,
\tilde\theta_d(x) \, \dif x$ by Lemma \ref{lem:kuznetsov}.

Let $\tilde\theta_d=\theta_n$ where $\{\theta_n\}_{n \in \N}$ is a
sequence of kernels s.t.~$\theta_n$
satisfies~\eqref{nat:smooth-kernel}, $\theta_n \rightarrow
{\omega_d}^{-1} \mathbf{1}_{|\cdot |<1}$ in~$L^1$, and $\int_{\R^d}
|D\theta_n| \, \dif x \rightarrow {\omega_d}^{-1}
|\mathbf{1}_{|\cdot|<1}|_{BV(\R^d)} $. Here $\omega_d$ is the volume
of the unit ball in $\R^d$. Note that the~$BV$-semi-norm of the
indicator function of the unit ball is equal to the surface area
of the unit sphere,~i.e. $|\mathbf{1}_{|\cdot |<1}|_{BV(\R^d)}
=d\omega_d$. Moreover, we have
$$\int_{\R^d}|x||\theta_n(x)| \, \dif
x\ra \frac1{\omega_d}\int_{|x|<1}|x| \, \dif x=\frac{d}{d+1}.$$
 The proof of~\eqref{nat:first-app} is then complete after passing to the limit as $n \rightarrow +\infty$
in \eqref{nat:cnl-14}.
\end{proof}

\subsection{Proof of Theorem~\ref{nat:th-levy}}

We argue step by step as in the proof of Theorem~\ref{th:nonlin}.
This time,~$E_\pm$ are taken such as
\begin{equation}\label{nat:Epm-m}
\begin{cases}
\mbox{$E_\pm \subseteq \R^d \setminus \{0\}$ are Borel sets};\\
\cup_\pm E^\pm=\R^d \setminus \{0\} \mbox{ and } \cap_\pm E_\pm=\emptyset;\\
\left(\R^d \setminus \{0\} \right)\setminus\mathrm{supp}(\mu-\nu)^\mp \subseteq E_\pm.
\end{cases}
\end{equation}
Let $\mu_\pm$ and $\nu_\pm$ denote the restrictions of $\mu$ and
$\nu$ to $E_\pm$. It is clear that
\begin{equation}\label{nat:cut-levy}
\begin{cases}
\mu=\sum_\pm \mu_\pm\quad \text{and}\quad  \nu=\sum_\pm \nu_\pm,\\
\pm (\mu_\pm-\nu_\pm)=(\mu-\nu)^\pm,\\
\mbox{$\mu_\pm,\nu_\pm,\ \text{and}\ \pm (\mu_\pm-\nu_\pm)$ all
satisfy~\eqref{levy-measure}.}
\end{cases}
\end{equation}

\begin{proof}[Proof of Theorem~\ref{nat:th-levy}]\
\smallskip

\noindent {\bf 1.}  We apply Lemma \ref{lem:kuznetsov} with $A=B$,
but different L\'evy measures~$\mu$ and~$\nu$, along with the
entropy inequality for $v$ to show that for all~$r,\epsilon>0$,
$0<\delta<T$
\begin{equation}\label{nat:cnl-1-levy}
\begin{split}
&\|u(T)-v(T)\|_{L^{1}(\mathbb{R}^d)}\\
& \leq \|u_0-v_0\|_{L^{1}(\mathbb{R}^d)}+\epsilon \, C_{\tilde\theta} \, |u_0|_{BV(\R^d)}+2\,\omega_u(\delta) \vee \omega_v(\delta)\\
& \quad +\iint_{Q_{T}^2}(q_g-q_f)(v,u) \cdot D_x  \phi^{\epsilon,\delta} \, \dif w\\
&\quad +\iint_{Q_{T}^2}|A(v)-A(u)| \,\mathcal{L}^{\nu^\ast}_r[\phi^{\epsilon,\delta}(\cdot,y,t,s)](x)\, \dif w\\
&\quad+\iint_{Q_{T}^2}
|A(v)-A(u)|\,\Levy^{\mu^\ast}_{r}[\phi^{\epsilon,\delta}(x,t,\cdot,s)](y)
\, \dif w\\
&\quad
+\underbrace{\iint_{Q_{T}^2} |A(v)-A(u)| \, \left(b_r^{\nu^\ast}-b_r^{\mu^\ast}\right) \, \cdot D_x \phi^{\epsilon,\delta} \, \dif w}_{=:I_3}\\
&\quad +\underbrace{\iint_{Q_{T}^2}\sgn(v-u)\,
  \Big(\mathcal{L}^{\nu,r}[A(v(\cdot,t))](x)-\mathcal{L}^{\mu,r}[A(u(\cdot,s))](y)\Big)
  \,\phi^{\epsilon,\delta}\, \dif w}_{=: I_4},
\end{split}
\end{equation}
where~$C_\epsilon>0$ does not depend on~$r>0 $. Except for $I_3$ and
$I_4$, the other terms were estimated in the proof of Theorem
\ref{th:nonlin}.
\medskip

\noindent {\bf 2.} {\em Cutting w.r.t.~$E_\pm$.} We use the notation
introduced in \eqref{nat:Epm-m}. We apply Lemma~\ref{nat:pro-kato-d}
twice with $\nu_+$ and $\mu_-$ instead of $\mu$, along with
linearity of  $\Levy^{\mu,r}$ in $\mu$, see
\eqref{nat:def-decomposition-inner}, to see that
\begin{align}
I_4&=\sum_\pm\iint_{Q_{T}^2}\sgn(v-u)\,
  \Big(\mathcal{L}^{\nu_\pm,r}[A(v(\cdot,t))](x)-\mathcal{L}^{\mu_\pm,r}[A(u(\cdot,s))](y)\Big)
  \,\phi^{\epsilon,\delta}\, \dif w\nonumber\\
&\leq \iint_{Q_{T}^2}\sgn(v-u)\,\Big(
\mathcal{L}^{\nu_+,r}[A(u(\cdot,s))](y)-\mathcal{L}^{\mu_+,r}[A(u(\cdot,s))](y)\Big)
\,\phi^{\epsilon,\delta}\, \dif w\nonumber\\
&\quad + \iint_{Q_{T}^2}\sgn(v-u)\,\Big(
\mathcal{L}^{\nu_-,r}[A(v(\cdot,t))](x)-\mathcal{L}^{\mu_-,r}[A(v(\cdot,t))](x)\Big)
\,\phi^{\epsilon,\delta}\, \dif w\nonumber\\
&=\iint_{Q_{T}^2}\sgn(u-v)\,\mathcal{L}^{\mu_+-\nu_+,r}[A(u(\cdot,s))](y)
\,\phi^{\epsilon,\delta}\, \dif w\nonumber\\
&\quad+
\iint_{Q_{T}^2}\sgn(v-u)\,\mathcal{L}^{-(\mu_--\nu_-),r}[A(v(\cdot,t))](x)
\,\phi^{\epsilon,\delta}\, \dif w\nonumber\\
&=:I_4^++I_4^-.\label{II4}
\end{align}
Again, it is crucial to have $u$ in $I_4^+$ and $v$ in $I_4^-$ in
order to use Kato's inequality later on.

Let us now consider $I_3$. By \eqref{nat:def-decomposition-b} and
\eqref{nat:measure-dual}, $b_r^{\mu}$ and $\mu^\ast$ are linear
w.r.t $\mu$. Easy computations using \eqref{nat:cut-levy} then leads
to
$$
\left(b_r^{\nu^\ast}-b_r^{\mu^\ast} \right) \cdot D_x
\phi^{\epsilon,\delta}= \sum_\pm b_r^{\pm (\mu_\pm-\nu_\pm)^\ast}
\cdot D_\pm \phi^{\epsilon,\delta}
$$
where~$D_{+}=D_{y}$ and $D_-=D_x$, and hence
\begin{equation*}
I_3= \sum_\pm \iint_{Q_T} |A(u)-A(v)| \, b_r^{\pm
(\mu_\pm-\nu_\pm)^\ast} \cdot D_\pm \phi^{\epsilon,\delta} \, \dif
w=:I_3^++I_3^-.
\end{equation*}

\medskip

\noindent {\bf 3.} {\em Cutting w.r.t.~$z$.} The computations of
this step are similar to the ones in the proof of
Theorem~\ref{th:nonlin}. For the reader's convenience, we estimate
$I_3^-+I_4^-$, the terms that was left to the reader in the
preceding proof.

For any measure $\tilde\mu$ we let $\tilde\mu_1=\tilde\mu_{|_{0<|z|
\leq r_1}}$ and write
$\tilde\mu=\tilde\mu_1+\tilde\mu_{|_{|z|>r_1}}$ for $r_1>r$. Then
\begin{align*}
I_4^-\leq  &\ \underbrace{\iint_{Q_{T}}\sgn(v-u)\, \mathcal{L}^{-(\mu_--\nu_-)_{1},r}[A(v(\cdot,t))](x) \,\phi^{\epsilon,\delta}\, \dif w}_{=:I_5^-} \\
& +\iint_{Q_{T}}\sgn(v-u)\, \mathcal{L}^{-(\mu_--\nu_-),r_1}[A(v(\cdot,t))](x) \,\phi^{\epsilon,\delta}\, \dif w.
\end{align*}
Recall that $-(\mu_--\nu_-)_{1}$ is a positive L\'evy measure by
\eqref{nat:cut-levy}, so we can apply Lemma~\ref{nat:ipp} with
$-(\mu_--\nu_-)_{1}$ instead of $\mu$ and $k=u(y,s)$ to find that
$$
I_5^- \leq \iint_{Q_{T}^2} |A(v)-A(u)|
\,\Levy^{-(\mu_--\nu_-)_{1}^\ast,r}[\phi^{\epsilon,\delta}(\cdot,t,y,s)](x)\,
\dif w
$$
and
\begin{align*} 
&I_3^-+I_5^-\leq\\
&
\iint_{Q_{T}^2} |A(v)-A(u)| \left( b_r^{-(\mu_--\nu_-)^\ast}  \cdot D_x \phi^{\epsilon,\delta} +\Levy^{-(\mu_--\nu_-)_1^\ast,r}[\phi^{\epsilon,\delta}(\cdot,t,y,s)](x) \right) \dif w.
\end{align*}
Easy computations then leads to
\begin{align*}
&\Levy^{-(\mu_--\nu_-)_1^{\ast},r}[\phi^{\epsilon,\delta}(\cdot,t,y,s)](x)\\
&=\theta_\delta(t-s) \int_{r<|z|\leq r_1}
\bar{\theta}_\epsilon(x-y-z)-\bar{\theta}_\epsilon(x-y)\, \dif
(\nu_--\mu_-)(z),
\end{align*}
and we can rewrite the nonlocal operator as follows,
\begin{equation*}
\begin{split}
& b_r^{-(\mu_--\nu_-)^\ast} \, \cdot D_x \phi^{\epsilon,\delta}+\Levy^{-(\mu_--\nu_-)_1^{\ast},r}[\phi^{\epsilon,\delta}(\cdot,t,y,s)](x)\\
& = \theta_\delta(t-s) \int_{r<|z|\leq r_1} \bar{\theta}_\epsilon(x-y-z)-\bar{\theta}_\epsilon(x-y)+z \cdot D \bar{\theta}_{\epsilon} (x-y) \, \dif (\nu_--\mu_-)(z) \\
& \quad -\theta_\delta(t-s) \, D \bar{\theta}_{\epsilon} (x-y) \cdot
\underbrace{\left(-b_r^{-(\mu_--\nu_-)^\ast}+\int_{r<|z|\leq r_1} z
\, \dif (\nu_--\mu_-)(z)\right)}_{=\sgn(r_1-1) \int_{r_1 \wedge (1
\vee r) <|z|\leq r_1 \vee 1} z \, \dif (\nu_--\mu_-)(z)}.
\end{split}
\end{equation*}
Compare this expression with \eqref{nat:cnl-6} that appear when
$I_3^+$ and $I_4^+$ are considered.

We add the different estimates and find that for all~$r_1>r$,
\begin{align*}
& I_3^-+I_4^- \\
&\leq \iint_{Q_{T}^2} |A(u)-A(v)|  \, \theta_\delta (t-s)\\
&\qquad\quad\cdot \int_{r<|z|\leq r_1} \bar{\theta}_\epsilon(x-y-z)-\bar{\theta}_\epsilon(x-y)+z \cdot D \bar{\theta}_{\epsilon} (x-y) \, \dif (\nu_--\mu_-)(z) \, \dif w\\
& \quad-\iint_{Q_{T}^2} |A(u)-A(v)| \, \theta_\delta (t-s) \, D
\bar{\theta}_\epsilon(x-y)\\
&\qquad\quad\cdot \sgn(r_1-1) \int_{r_1 \wedge (1 \vee r) <|z|\leq r_1 \vee 1} z \, \dif (\nu_--\mu_-)(z) \, \dif w\\
&\quad +\underbrace{\iint_{Q_{T}^2}\sgn(v-u)\,
  \mathcal{L}^{-(\mu_--\nu_-),r_1}[A(v(\cdot,t))](x)
  \,\phi^{\epsilon,\delta}\, \dif w}_{\leq \iint_{Q_{T}^2} \sgn(v-u)
  \,
  \mathcal{L}^{-(\mu_--\nu_-),r_1}[A(u(\cdot,s))](y)
  \,\phi^{\epsilon,\delta}\, \dif w\ \text{by Lemma~\ref{nat:pro-kato-d}}}\\
&=J_1^-+J_2^-+J_3^-.
\end{align*}
Similar arguments also lead to
\begin{align*}
& I_3^++I_4^+ \\
&\leq  \iint_{Q_{T}^2} |A(u)-A(v)|\theta_\delta (t-s)\\
& \qquad\quad \cdot  \int_{r<|z|\leq r_1}
\bar{\theta}_\epsilon(x-y+z)-\bar{\theta}_\epsilon(x-y)-z \cdot D
\bar{\theta}_{\epsilon} (x-y) \, \dif (\mu_+-\nu_+)(z) \, \dif w\\
&\quad +\iint_{Q_{T}^2} |A(u)-A(v)|\,\theta_\delta (t-s) \, D \bar{\theta}_\epsilon(x-y)\\
& \qquad\quad   \cdot \sgn(r_1-1) \int_{r_1 \wedge (1 \vee r) <|z|\leq r_1 \vee 1} z \, \dif (\mu_+-\nu_+)(z) \, \dif w\\
&\quad +\iint_{Q_{T}^2}\sgn(u-v)\, \mathcal{L}^{(\mu_+-\nu_+),r_1}[A(u(\cdot,s))](y) \,\phi^{\epsilon,\delta}\, \dif w,\\
& =:J_1^++J_2^++J_3^+.
\end{align*}

\medskip

\noindent {\bf 4.} {\em $L^1 \cap BV$-regularity.} We
estimate~$J_i^\pm$ ($i=1,\dots,3$). Almost all the computations have
already been done in the preceding proof. Indeed,~$J_1^\pm$ are of the
same form as in~\eqref{nat:cnl-8}--\eqref{nat:cnl-8-bis}, with the new
nonlinearity~$A$ and the new measures~$\pm (\mu_\pm-\nu_\pm)$. Arguing
as for~\eqref{nat:esti-sing-1} thus gives
\begin{equation*}
\begin{split}
\sum_\pm J_1^\pm \leq  & \frac{1}{2\epsilon} \,
\int_{\R^d}|D\tilde\theta_d|\dif x \\
&\qquad\cdot \underbrace{|A(u)|_{L^1(0,T;BV)}}_{\leq |u_0|_{BV(\R^d)} \, T \,
\|A'\|_{L^\infty(\R)} \mbox{ by~\eqref{nat:nonincrease}}} \, \int_{r<|z| \leq r_1} |z|^2 \, \dif
\underbrace{\sum_\pm \pm (\mu_\pm-\nu_\pm)}_{=|\mu-\nu| \mbox{
    by~\eqref{nat:cut-levy}}}(z).
\end{split}
\end{equation*}
Moreover,~$\sum_\pm (\mu_\pm-\nu_\pm)=\mu-\nu$ and hence
\begin{equation*}
\begin{split}
\sum_\pm J_2^\pm = &\ \iint_{Q_{T}^2} |A(u)-A(v)|\, \theta_\delta (t-s) \, D \bar{\theta}_\epsilon(x-y) \\
&\qquad\cdot \sgn(r_1-1) \int_{r_1 \wedge (1 \vee r) <|z|\leq r_1
\vee 1} z \, \dif (\mu-\nu)(z) \, \dif w.
\end{split}
\end{equation*}
This term is of the same form than~$J_2^+$ in~\eqref{nat:cnl-8} (or~$J_2^-$ in~\eqref{nat:cnl-8-bis}) with the new nonlinearity~$A$ and the new flux~$\int_{r_1 \wedge (1 \vee r) <|z|\leq r_1
\vee 1} z \, \dif (\mu-\nu)(z)$. Arguing as for~\eqref{nat:esti-sing-2} and using~\eqref{nat:nonincrease} thus give  
\begin{equation*}
\sum_\pm J_2^\pm \leq |u_0|_{BV(\R^d)} \, T \, \|A'\|_{L^\infty(\R)}
\bigg|\int_{r_1 \wedge (1 \vee r) < |z| \leq r_1 \vee 1} z \, \dif
(\mu-\nu)(z)\bigg|.
\end{equation*}
Finally, 
\begin{align*}
\sum_\pm J_3^\pm \leq 
T \, \|A'\|_{L^\infty(\R)} \int_{|z| \geq r_1}
\|u_0(\cdot+z)-u_0 \|_{L^1(\R^d)} \, \dif |\mu-\nu| (z).
\end{align*}


\medskip

\noindent {\bf 5.} {\em Conclusion.} The rest of the proof is the
same as for Theorem~\ref{th:nonlin};~i.e. we use the estimates on
$J_i^\pm$ to estimate $I_3+I_4 \leq \sum_{i=1}^3 \sum_\pm J_i^\pm$
in \eqref{nat:cnl-1-levy} and pass to limit and/or optimizes w.r.t.
the parameters $r,\epsilon,\delta>0$. The proof is complete.
\end{proof}

\appendix

\section{Technical computations}\label{nat:app-tech}

\begin{proof}[Proof of \eqref{nat:esti-sing-1} and \eqref{nat:esti-sing-2}]
We start by proving \eqref{nat:esti-sing-1} in the
  $+$ case. Similar arguments give the proof also in the $-$ case. From
  Taylor's formula
\begin{align*}
\bar{\theta}_{\epsilon}(x-y+z)-\bar{\theta}_{\epsilon}(x-y)-z \cdot
D \bar{\theta}_{\epsilon}(x-y) \, = \int_0^1  (1-\tau) \,
D^2\bar{\theta}_{\epsilon}(x-y+\tau \, z) \, z \cdot z \, \dif \tau,
\end{align*}
and hence by Fubini's theorem,~$J_1^+$ in~\eqref{nat:cnl-8} can be written as
\begin{multline}\label{nat:s-1}
J_1^+=\iint_{(0,T)^2}
\int_{r<|z|\leq r_1} \int_0^1  \theta_\delta (t-s) \, (1-\tau)  \\
\cdot \underbrace{\int_{\R^{d}} \int_{\R^d} |C_+(v(x,t))-C_+(u(y,s))|
\, D^2\bar{\theta}_{\epsilon}(x-y+\tau \, z) \, z \cdot z \, \dif y
\, \dif x}_{=:J} \, \dif \tau \, \dif \mu(z) \, \dif t \, \dif s.
\end{multline}
For any $k\in\R$, it is classical that
$\eta_k(C_+(u(\cdot,s)))=|k-C_+(u(\cdot,s))|\in  BV$ with
$$
|D\eta_k(C_+(u(\cdot,s))| \leq  |DC_+(u(\cdot,s))|,
$$
since it is the composition of a Lipschitz-continuous function with
a~$BV$-function; see e.g.~\cite{Perthame,EvGa92,Rud74}.  Integration
by parts w.r.t.~$y$ (for fixed~$z,x,t,s$), then leads to
\begin{align*}
|J|& = \left|\int_{\R^d} \int_{\R^d} D \bar{\theta}_{\epsilon}(x-y+\tau \, z) \cdot z \, z \cdot \dif D\eta_{C_+(v(x,t))}(C_+(u(\cdot,s)))(y) \, \dif x\right|,\\
& \leq  |z|^2 \int_{\R^d} \int_{\R^d} |D
\bar{\theta}_{\epsilon}(x-y+\tau \, z)| \, \dif |DC_+(u(\cdot,s))|(y)
\, \dif x.
\end{align*}
Note that by the definition of~$\bar \theta_\epsilon$ (just
below~\eqref{nat:test-kuznetsov}), we have 
$$
\int_{\R^d} |D
\bar{\theta}_{\epsilon}(x)| \, \dif x=\frac{1}{\epsilon}\,\int_{\R^d}|D\tilde\theta_d|\dif x.
$$
Hence, we change the order of integration (using Fubini) to see that
$$
|J| \leq  |z|^2 \, |C_+(u(s))|_{BV(\R^d)} \, \int_{\R^d} |D
\bar{\theta}_{\epsilon}(x)| \, \dif x \leq   |z|^2 \,
|C_+(u(s))|_{BV(\R^d)}\frac{1}{\epsilon} \,
\int_{\R^d}|D\tilde\theta_d|\dif x ,
$$
and then from \eqref{nat:s-1} that
\begin{align*}
 |J_1^+|\leq \, & \frac{1}{\epsilon}\,\int_{\R^d}|D\tilde\theta_d|\dif x\\
&\cdot\iint_{(0,T)^2} \int_{r<|z|\leq r_1} \int_0^1  \theta_\delta
(t-s) \, (1-\tau)  \, |z|^2 \,|C_+(u(s))|_{BV(\R^d)} \, \dif \tau \,
\dif \mu(z) \, \dif t \, \dif s.
\end{align*}
Let us recall that the integrand above is $\dif \tau \, \dif \mu(z)
\, \dif t \, \dif s$-measurable since $s \rightarrow
|u(s)|_{BV(\R^d)}$  is lower semi-continuous. By Fubini we then
integrate first w.r.t.~$t$ and use that~$\int_0^T \theta_\delta(t-s)
\,  \dif t \leq 1$ to see that
$$
|J_1^+| \leq \frac{1}{\epsilon} \, \int_{\R^d}|D\tilde\theta_d|\dif x
\int_0^1 (1-\tau) \, \dif \tau \int_{r<|z|\leq r_1} \, |z|^2 \dif
\mu(z)  \int_{0}^T  |C_+(u(s))|_{BV(\R^d)} \, \dif s,
$$
and the proof of \eqref{nat:esti-sing-1} is complete.

\medskip

We prove \eqref{nat:esti-sing-2} by similar arguments. Define
\begin{equation}\label{nat:s-3}
q(v,u):=|v-u| \, \sgn(r_1-1) \int_{r_1 \wedge (1 \vee r) <|z|\leq
r_1 \vee 1} z \, \dif \mu(z),
\end{equation}
and note that it is Lipschitz-continuous. Again by
Fubini's theorem,~$J_2^+$ in~\eqref{nat:cnl-8} can be written as
\begin{equation}\label{nat:s-2}
J_2^+=\iint_{(0,T)^2}  \theta_\delta (t-s) \underbrace{\int_{\R^{d}}
\int_{\R^d} D \bar{\theta}_\epsilon(x-y) \cdot q(C_+(v(x,t),C_+(u(y,s)))
\, \dif y \, \dif x}_{=:J} \,  \dif t \, \dif s.
\end{equation}
For fixed~$(x,t,s)$,~$q(C_+(v(x,t),\cdot)$ is Lipschitz-continuous and~$C_+(u(\cdot,s))$ is~$BV$; hence, the composition~$q(C_+(v(x,t)),C_+(u(\cdot,s)))$ is in~$BV(\R^d,\R^d)$ with 
$$
|\Div_y q(C_+(v(x,t)),C_+(u(\cdot,s)))| \leq |D C_+(u(\cdot,s))| \, \|q_u\|_{L^\infty(\R,\R^d)},
$$
where~$\|q_u\|_{L^\infty(\R,\R^d)}$ denotes the Lipschitz
constant of~$q$ w.r.t. its second variable.
We thus may integrate by parts in $y$
to see that
$$
|J| \leq \|q_u\|_{L^\infty(\R,\R^d)} \int_{\R^{d}} \int_{\R^d}
\bar{\theta}_{\epsilon}(x-y) \, \dif |D C_+(u(\cdot,s))|(y) \, \dif x.
$$
Changing the order
of integration, we find that
$$J \leq
|C_+(u(s))|_{BV(\R^d)} \|q_u\|_{L^\infty(\R,\R^d)}, $$ and hence
by \eqref{nat:s-2} and integrating first w.r.t.~$t$, we get that
\begin{equation}\label{nat:s-4}
|J_2^+| \leq \|q_u\|_{L^\infty(\R,\R^d)}  \int_0^T
|C_+(u(s))|_{BV(\R^d)} \, \dif s.
\end{equation}
The proof of~\eqref{nat:esti-sing-2} is now complete since by
\eqref{nat:s-3},
$$\|q_u\|_{L^\infty(\R,\R^d)} =\left|\int_{r_1 \wedge (1 \vee r) <|z|\leq r_1 \vee 1} z \, \dif \mu(z) \right|.$$
\end{proof}

\section*{Acknowledgement}

We would like to thank the two anonymous referees for their careful reports. They helped us improve the paper a lot.

\end{document}